\numberwithin{equation}{section}
\numberwithin{figure}{section}
\theoremstyle{plain}
\newtheorem{thm}{Theorem}
  \theoremstyle{plain}
  \numberwithin{thm}{section}
  \newtheorem{cor}[thm]{Corollary}
  \theoremstyle{plain}
  \newtheorem{lem}[thm]{Lemma}
  \theoremstyle{remark}
  \newtheorem{rem}[thm]{Remark}
    \theoremstyle{remark}
  \newtheorem{example}[thm]{Example}
    \newtheorem{examples}[thm]{Examples}
   \theoremstyle{plain}
  \def\Ddots{\mathinner{\mkern1mu\raise\p@
\vbox{\kern7\p@\hbox{.}}\mkern2mu
\raise4\p@\hbox{.}\mkern2mu\raise7\p@\hbox{.}\mkern1mu}}
\newtheorem{definition}{Definition}
\numberwithin{definition}{section}
\newtheorem*{result}{Theorem}
\newcommand{\norm}[1]{\left\| #1 \right\|}
\newcommand{\mklm}[1]{\left\{ #1 \right\}}
\newcommand{\eklm}[1]{\left\langle #1 \right\rangle}
\renewcommand{\d}{\,d}
\newcommand{\N}{{\mathbb N}}
\newcommand{\Z}{{\mathbb Z}}
\newcommand{\C}{{\mathbb C}}
\newcommand{\Ccal}{{\mathcal C}}
\newcommand{\R}{{\mathbb R}}
\newcommand{\B}{{\mathcal B}}
\newcommand{\E}{{\mathcal E}}
\newcommand{\Jbb}{{\mathbb J }}
\renewcommand{\O}{{\mathcal O}}
\newcommand{\Xbf}{{\mathbf X}}
\newcommand{\W}{{\mathcal W}}
\newcommand{\Wh}{{{\mathcal W}_h}}
\renewcommand{\epsilon}{\varepsilon}
\renewcommand{\rho}{\varrho}
\newcommand{\bdm}{\begin{displaymath}}
\newcommand{\edm}{\end{displaymath}}
\newcommand{\bq}{\begin{equation}}
\newcommand{\eq}{\end{equation}}
\newcommand{\bqn}{\begin{equation*}}
\newcommand{\eqn}{\end{equation*}}
\newcommand{\Cinft}{{\rm C^{\infty}}}
\newcommand{\CT}{{\rm C^{\infty}_c}}
\newcommand{\Sob}{{\rm H}}
\renewcommand{\L}{{\rm L}}
\newcommand{\Ncal}{{\mathcal N}}
\renewcommand{\S}{{\mathcal S}}
\newcommand{\SO}{\mathrm{SO}}
\newcommand{\g}{{\bf \mathfrak g}}
\renewcommand{\k}{{\bf \mathfrak k}}
\renewcommand{\t}{{\bf \mathfrak t}}
\newcommand{\Ad}{\mathrm{Ad}\,}
\newcommand{\id}{\mathrm{id}\,}
\renewcommand{\det}{\mathrm{det}\,}
\newcommand{\vol}{\text{vol}\,}
\newcommand{\Crit}{\mathrm{Crit}}
\DeclareMathOperator{\supp}{supp\,}
\DeclareMathOperator{\tr}{tr}
\DeclareMathOperator{\gd}{\partial}
\begin{document}
\author{Benjamin K\"uster}
\address{Philipps-Universit\"at Marburg, Fachbereich Mathematik und Informatik, Hans-Meerwein-Str., 35032 Marburg, Germany}
\email[Benjamin K\"uster]{\href{mailto:bkuester@mathematik.uni-marburg.de}{bkuester@mathematik.uni-marburg.de}}
\author{Pablo Ramacher}
\email[Pablo Ramacher]{\href{mailto:ramacher@mathematik.uni-marburg.de}{ramacher@mathematik.uni-marburg.de}}
\title[Semiclassical analysis and symmetry reduction I]{Semiclassical analysis and symmetry reduction I. \\ 
Equivariant Weyl law  for invariant Schr\"odinger operators on compact  manifolds} 
\keywords{Semiclassical analysis, Weyl's law,  Peter-Weyl decomposition, symplectic reduction,  singular equivariant asymptotics, resolution of singularities}
\date{\today}

\begin{abstract}
We study  the spectral properties of Schr\"odinger operators on a compact connected Riemannian manifold $M$ without boundary in case that the underlying Hamiltonian system possesses certain symmetries. More precisely, if $M$ carries an isometric and effective action of a compact connected Lie group $G$, we  prove a generalized equivariant version of the semiclassical Weyl law with an estimate for the remainder, using a semiclassical functional calculus for $h$-dependent functions and relying on recent results on singular equivariant asymptotics.  These results will be used to derive an equivariant quantum ergodicity theorem in Part II of this work.  When $G$ is trivial, one recovers the classical results. 
\end{abstract}

\maketitle

\setcounter{tocdepth}{1}
\tableofcontents{}

\section{Introduction}

\subsection{Problem and setup}
Let $M$ be a compact connected boundary-less  Riemannian $\Cinft$-manifold of dimension $n$ with Riemannian volume density $dM$, and denote by $\Delta$ the Laplace-Beltrami operator on $M$ with domain $\Cinft(M)$. 
One of the central problems in spectral geometry is to study the properties of eigenvalues and eigenfunctions of $-\Delta$ in the limit of large eigenvalues. Concretely, let $\mklm{u_j}$ be an orthonormal basis of $\L^2(M)$ of eigenfunctions of $-\Delta$ with respective eigenvalues $\mklm{E_j}$, repeated according to their multiplicity. As $E_j \to \infty$,  one is interested among other things in the asymptotic distribution of eigenvalues,  the pointwise convergence of the $u_j$, bounds of the $\L^p$-norms of the $u_j$ for $1 \leq p \leq \infty$, and the weak convergence of the measures $|u_j|^2 dM$. 
The study of the  distribution of eigenvalues has a  long history  that goes back to  Weyl \cite{weyl},  Levitan \cite{levitan52}, Avacumovi\v{c} \cite{avacumovic}, and H\"ormander \cite{hoermander68},  
while the behavior of eigenfunctions has attracted much attention in more recent times, one of the major results  in this direction  being the \emph{quantum ergodicity theorem} for  chaotic systems, due to  Shnirelman \cite{shnirelman}, Zelditch \cite{zelditch1987}, and Colin de Verdi\`{e}re \cite{colindv}. 
This paper is the first in a sequel which  addresses these problems for Schr\"odinger operators in case that the underlying classical system possesses certain symmetries. 

In this first part, we shall concentrate  on the distribution of eigenvalues. The question is then how the symmetries of the underlying Hamiltonian system  determine the fine structure of the spectrum in accordance with the correspondence principle of quantum physics. 
To explain things more precisely, let $G$ be a compact connected Lie group that acts effectively and isometrically on $M$. Note that  there might be   orbits  of different dimensions, and that the orbit space $\widetilde M:=M/G$  won't be a manifold in general, but a topological quotient space.  If $G$ acts on $M$ with finite isotropy groups, $\widetilde M$ is a compact orbifold, and its  singularities are not too severe. Consider now a \emph{Schr\"odinger operator} on $M$
\bqn
\breve{P}(h)=-h^2\Delta + V,\qquad \breve{P}(h):\Cinft(M)\to \Cinft(M),\qquad h\in(0,1],
\eqn
$V\in \Cinft(M,\R)$ being a $G$-invariant potential. $\breve{P}(h)$ has a unique self-adjoint extension 
\bq
\label{eq:13.08.15}
P(h):\mathrm{H}^2(M)\to \L^2(M)
\eq
as an unbounded operator in $\L^2(M)$, where $\mathrm{H}^2(M)\subset \L^2(M)$ denotes the second Sobolev space, and one calls $P(h)$ a Schr\"odinger operator, too. The quantum mechanical properties of $P(h)$ can be described by studying its spectrum. For each $h\in(0,1]$, it is discrete, consisting of  eigenvalues $\{E_j(h)\}_{j\in \N}$ which we repeat according to their multiplicity and which form a non-decreasing sequence unbounded towards $+\infty$.  The associated sequence of eigenfunctions $\{u_{j}(h)\}_{j\in\mathbb{N}}$ constitutes a Hilbert basis in $\L^2(M)$ of smooth functions,  and the eigenspaces are finite-dimensional. When studying the spectral asymptotics of Schr\"odinger operators, one often uses the \emph{semiclassical method}. Instead of examining the spectral properties of $P(h)$ for fixed $h=\hbar$ and high energies, $\hbar$ being Planck's constant, one considers fixed energy intervals, allowing $h\in(0,1]$ to become small. The two methods are essentially equivalent. In the special case $V\equiv0$, the Schr\"odinger operator is just a rescaled version of $-\Delta$ so that the semiclassical method can be used to study the spectral asymptotics of the Laplace-Beltrami operator. Now, since  $P(h)$ commutes with the isometric $G$-action, one can use representation theory to describe  the spectrum of $P(h)$ in a more refined way. Indeed, by the Peter-Weyl theorem, the unitary left-regular representation of $G$ 
\begin{equation*}
G\times \L^2(M)  \to  \L^2(M),\qquad (g,f)  \mapsto  \left(L_gf:\, x\mapsto f(g^{-1}\cdot x)\right),
\end{equation*}
has an orthogonal decomposition into isotypic components according to
\bq
\label{eq:PW} 
\L^2(M)=\bigoplus_{\chi \in \widehat G} \L^2_\chi(M),\qquad \L^2_\chi(M)= T_\chi \, \L^2(M),
\eq
where  $\widehat{G}$ denotes the set of isomorphism classes of irreducible unitary $G$-representations which  we identify with the character ring of $G$, while $T_\chi:\L^2(M)\to \L^2_\chi(M)$
are the  associated orthogonal projections  given by 
\bq
T_{\chi}: f  \mapsto  \Big(x\mapsto d_{\chi}\int_{G}\overline{\chi(g)}f(g^{-1}\cdot x)\, dg\Big),\label{eq:groupproj}
\eq
where $dg$ is the normalized Haar measure on $G$, and $d_\chi$ the dimension of an irreducible representation of class $\chi$.  Since each eigenspace of the Schr\"odinger operator $P(h)$ constitutes a unitary $G$-module, it has an analogous decomposition into a direct sum of irreducible $G$-representations, which represents the so-called \emph{fine structure} of the spectrum of $P(h)$. To study this fine structure asymptotically, consider for a fixed $\chi\in \widehat G$ and any operator
$A:D\to \L^2(M)$  on a $G$-invariant subset $D\subset \L^2(M)$ the corresponding \emph{reduced operator}  \[
A_{\chi}:=T_{\chi}\circ A\circ T_{\chi}|_D.\]
 Since $P(h)$ commutes with $T_\chi$, the reduced operator $P(h)_\chi$ corresponds to the bi-restriction $P(h)|_\chi:\L^2_\chi(M)\cap \Sob^2(M)\to \L^2_\chi(M)$.  More generally, one can consider  bi-restrictions of $P(h)$ to $h$-dependent sums of isotypic components of the form
\[
\L^2_\Wh(M)=\bigoplus_{\chi \in \Wh} \L^2_\chi(M),
\]
where the  $\Wh\subset \widehat G$ are  appropriate finite subsets  whose cardinality is allowed to grow in a controlled way as $h\to 0$. A natural problem is  then to examine the spectral asymptotics of $P(h)$ bi-restricted to $\L^2_\Wh(M)$. The study of a single isotypic component corresponds to choosing $\Wh=\{\chi\}$ for all $h$ and a fixed $\chi\in \widehat G$.  Note that, so far, it is  irrelevant whether the group action has various different orbit types or not. 

On the other hand, the principal symbol of the Schr\"odinger operator \eqref{eq:13.08.15} is given by the $G$-invariant symbol function 
\bq
\label{eq:13.08.15a}
p: T^*M\to \R,\qquad(x,\xi)\mapsto \norm{\xi}^2_x + V(x),
\eq
and describes the classical mechanical properties of the underlying Hamiltonian system on the co-tangent  bundle $T^*M$ of $M$ with   canonical symplectic form $\omega$.
Consider now for a regular value $c$ of  $p$  the hypersurface  $\Sigma_c:=p^{-1}(\{c\})\subset T^*M$, which 
carries a canonical hypersurface measure $d\Sigma_c$  induced by $\omega$. In the special case that  $\Sigma_c=S^\ast M$ is the co-sphere bundle, $d\Sigma_c=d(S^*M)$ is commonly called the \emph{Liouville measure}. 
To describe the classical dynamical properties of the system, it is  convenient to divide out the symmetries, which can be done by  performing a procedure called \emph{symplectic reduction}. 
Namely, let $\Jbb:T^*M\rightarrow \g^\ast$ denote the momentum map of the induced Hamiltonian $G$-action on $T^*M$, which represents the conserved quantitites of the dynamical system, and consider  the topological quotient space 
\bqn
\widetilde \Omega:=\Omega/G, \qquad   \Omega:=\Jbb^{-1}(\mklm{0}).
\eqn
 In contrast to the situation encountered in the Peter-Weyl theorem,  the orbit structure of the underlying $G$-action on $M$ is not at all irrelevant for the symplectic reduction. Indeed, if the $G$-action is not free the spaces $\Omega$ and $\widetilde \Omega$ need not be manifolds. Nevertheless, they are stratified spaces, where each stratum is a smooth manifold that consists of  orbits of one particular type. In particular, $\Omega$ and $\widetilde \Omega$ each have a \emph{principal stratum} $\Omega_\text{reg}$ and $\widetilde \Omega_\text{reg}$, respectively, which is the smooth manifold consisting of (the union of) all orbits of type $(H)$, where $H$ denotes a principal  isotropy group of the $G$-action on $M$. Moreover, $\widetilde \Omega_\text{reg}$ carries a canonical symplectic structure with an associated volume density.  The intersection $\Omega_\text{reg}\cap\Sigma_c$ is transversal, so that it defines a hypersurface in $\Omega_\text{reg}$. Similarly, when passing to the orbit space, we obtain the hypersurface $\widetilde \Sigma_c=\big(\Omega_\text{reg}\cap\Sigma_c\big)/G\subset \widetilde \Omega_\text{reg}$ that carries a canonical measure $d\widetilde{\Sigma}_c$ induced from the volume density on $\widetilde \Omega_\text{reg}$, and one can interpret the  measure space $(\widetilde{\Sigma}_c,d\widetilde{\Sigma}_c )$ as the symplectic reduction of the measure space $(\Sigma_c,d\Sigma_c)$. For a detailed exposition of these facts, the reader is referred to Sections \ref{sec:2.3} and \ref{subsec:spaces}.\\

Let us now come  back to our initial question. In what follows, we shall study the distribution of the eigenvalues of $P(h)$ along $h$-dependent families of isotypic components $\L^2_{\Wh}(M)$ in the Peter-Weyl decomposition of $\L^2(M)$ as $h \to 0$, and the way their distribution  in a spectral window $[c,c+h^\delta]$ around a regular value $c$ of $p$ is related to the symplectic reduction $(\widetilde{\Sigma}_c,d\widetilde{\Sigma}_c )$ of the corresponding Hamiltonian system, $\delta>0$ being a suitable small number. Similar problems were studied for $h$-pseudodifferential operators in  $\R^n$ in \cite{helffer-elhouakmi91}, \cite{cassanas}, \cite{weich}, and within the classical high-energy approach in \cite{donnelly78}, \cite{bruening-heintze79}, \cite{helffer-robert84,helffer-robert86}, and \cite{ramacher10}. In our approach, we shall combine well-known methods from semiclassical analysis and symplectic reduction with results on singular equivariant asymptotics recently developed in \cite{ramacher10, ramacher15a}.
\subsection{Results}
\label{sec:13.08.15}
To describe our results, we need to fix some additional notation.
We denote by $\kappa$ the dimension of the $G$-orbits in $M$ of principal type $(H)$, which agrees with the maximal dimension of a $G$-orbit in $M$, and assume throughout the whole paper that $\kappa<n=\dim M$. Furthermore, we denote by $\Lambda^{}$ the maximal number of elements of a totally ordered subset of isotropy types of the $G$-action on $M$. For an equivalence class  $\chi\in \widehat G$ with corresponding irreducible $G$-representation $\pi_\chi$, we write $[\pi_{\chi}|_{H}:\mathds{1}]$ for the multiplicity of the trivial representation in the restriction of  $\pi_\chi$ to $H$. Let $\widehat G'\subset \widehat G$ denote the subset consisting of those classes of representations that appear in the decomposition \eqref{eq:PW} of $\L^2(M)$.  In order to consider a growing number of isotypic components of $\L^2(M)$ in the semiclassical limit, we make the following
\begin{definition}
\label{def:semiclfam}
A family $\{\W_h\}_{h\in (0,1]}$ of finite sets $\W_h\subset \widehat G'$ is called \emph{semiclassical character family} if there exists a $\vartheta\geq 0$ such that for each $N\in \{0,1,2,\ldots\}$ and each differential operator $D$ on $G$ of order $N$, there is a constant $C>0$ independent of $h$ with
\[
\frac 1 {\# \W_h} \sum_{\chi\in \Wh} \frac{\norm{D\, \overline{\chi}}_\infty}{\left[ \pi_{\chi}|_{H}:\mathds{1}\right]}\leq C\,h^{-\vartheta N}\qquad \forall \;h\in(0,1].
\]
We call the smallest possible $\vartheta$ the \emph{growth rate} of the semiclassical character family.
\end{definition}

\begin{rem}
Note that $\left[ \pi_{\chi}|_{H}:\mathds{1}\right]\geq 1$ for $\chi \in \widehat G'$, since  the irreducible $G$-representations appearing in the decomposition \eqref{eq:PW} of $\L^2(M)$ are  precisely those representations appearing in $\L^2(G/H)$, and  by the Frobenius reciprocity theorem one has $\left[ \pi_{\chi}|_{H}:\mathds{1}\right]=[\L^2(G/H):\pi_\chi]$, compare \cite[Section 2]{donnelly78}.
\end{rem}

\begin{example}
\label{ex:SO(2)}
For $G=\SO(2)\cong S^1\subset \C$, one has $\widehat G\equiv\{\chi_k:k\in \Z\}$, where the $k$-th character $\chi_k:G\to \C$ is given by $\chi_k\big(e^{i\varphi}\big)=e^{ik\varphi}$. One then obtains a semiclassical character family with growth rate less or equal to $\vartheta$ by setting
$\Wh:=\{\chi_k:|k|\leq h^{-\vartheta}\}$.  
\end{example}

\begin{example}
More generally, let $G$ be a connected  compact semi-simple Lie group with Lie algebra $\g$ and $T\subset G$ a maximal torus with Lie algebra $\t$. By the Cartan-Weyl classification of irreducible finite dimensional representations of reductive Lie algebras over $\C$,   $\widehat G$ can be identified with the set of  dominant integral and $T$-integral linear forms $\Lambda$ on the complexification $\t_\C$ of the Lie algebra $\t$ of $T$. Let therefore denote $\Lambda_\chi\in \t_\C^\ast$ the element associated with a class $\chi \in \widehat G$, and put  $\mathcal{W}_h:=\mklm{\chi\in \widehat G: \, |\Lambda_\chi|\leq h^{-\vartheta}}$, where $\vartheta \geq 0$, $h \in (0, 1]$. Then  $\{\Wh\}_{h\in(0,1]}$ constitutes a  semiclassical character family  with growth rate less or equal to $\vartheta$ in the sense of Definition \ref{def:semiclfam},  see \cite[Section 3.2]{ramacher15a} for details.
\end{example}

Denote by $\Psi_{h,\delta}^{m}(M)$, $m \in \R\cup\{-\infty\}$, $\delta\in[0,\frac{1}{2})$, the set of semiclassical pseudodifferential operators on $M$ of order $(m,\delta)$. The principal symbols of these operators are represented by symbol functions in the classes $S^m_\delta(M)$, where the index $\delta$ describes the growth properties of the symbol functions as $h\to 0$, see Section \ref{subsec:semiclassical} for the precise definitions. An important point to note is that elements of $S^0_\delta(M)$ define operators on $\L^2(M)$ with operator norm bounded uniformly in $h$. Finally, for any measurable function $f$ with domain $D$ a $G$-invariant subset of $M$ or $T^*M$, we write
\bq
{\eklm{f}}_G(x):= \int_Gf(g\cdot x)\d g \label{eq:orbitalintegral},
\eq
and denote by $\widetilde{\eklm{f}}_G$ the function induced on the orbit space $D/G$ by the $G$-invariant function ${\eklm{f}}_G$. The main result of    this paper is the following
 
\begin{result}[{\bf Generalized equivariant semiclassical Weyl law}, Theorem \ref{thm:weyl2}] \label{res:1}  
Let $\delta\in \big(0,\frac{1}{2\kappa+4}\big)$ and choose an operator $B\in \Psi_{h,\delta}^0(M)\subset \B(\L^2(M))$ with principal symbol represented by $b\in S_\delta^0(M)$ and a semiclassical character family $\{\Wh\}_{h\in(0,1]}$ with growth rate $\vartheta<\frac{1-(2\kappa+4)\delta}{2\kappa+3}$.  Write
\[
J(h):=\big\{j\in \N:E_j(h)\in[c,c+h^\delta],\; \chi_j(h) \in \Wh\big\},
\]
where $\chi_j(h)\in \widehat G$ is defined by $u_j(h)\in \L^2_{\chi_j(h)}(M)$. Then, one has in the semiclassical limit $h\to 0$ \begin{align}
\label{eq:intBres}
\begin{split}
\frac{(2\pi)^{n-\kappa} h^{n-\kappa-\delta}}{\#\Wh}\sum_{J(h)}\frac{\langle Bu_{j}(h),u_{j}(h)\rangle_{\L^2(M)}}{d_{\chi_j(h)}\,[ \pi_{\chi_j(h)}|_{H}:\mathds{1}]}&=\intop_{{\Sigma}_c\cap \,\Omega_{\text{reg}}}b \, \frac {\d{\mu}_c}{\vol_\O}\\
 &+\; \mathrm{O}\Big(h^{\delta}+h^{\frac{1-(2\kappa+3)\vartheta}{2\kappa +4}-\delta}\left (\log h^{-1}\right)^{{\Lambda^{}}-1}\Big).
\end{split}
\end{align}
\end{result}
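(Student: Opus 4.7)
The plan is to convert the spectral sum into an operator trace, express it through a semiclassical functional calculus of $h$-dependent functions, and then analyze the resulting oscillatory integral over $T^*M \times G$ with the singular equivariant stationary phase of \cite{ramacher10, ramacher15a}, finally removing the smoothing of the sharp spectral window by a Tauberian argument.

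First I would approximate $\mathds{1}_{[c,c+h^\delta]}$ from above and below by $\rho_h^\pm \in \CT(\R)$ supported in slight enlargements of the interval, with derivatives satisfying $\|(\rho_h^\pm)^{(k)}\|_\infty = \mathrm O(h^{-k\delta})$. Using the orthogonality of the projectors $T_\chi$ and of the eigenspaces, the weighted sum on the left-hand side of \eqref{eq:intBres} can be rewritten as
\[
\sum_{\chi \in \Wh} \frac{1}{d_\chi\,[\pi_\chi|_H:\mathds{1}]}\, \Tr_{\L^2(M)}\bigl(T_\chi\, B\, \mathds{1}_{[c,c+h^\delta]}(P(h))\bigr),
\]
so that it will suffice to analyze the corresponding traces with $\rho_h^\pm$ in place of the sharp indicator, with a remainder estimate uniform in $\chi \in \Wh$.

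For the functional calculus step I would use the Helffer--Sj\"ostrand formula to show that $\rho_h^\pm(P(h))$ lies in $\Psi_{h,\delta}^{-\infty}(M)$ with principal symbol $\rho_h^\pm \circ p$. Composing with $B$ and substituting the definition \eqref{eq:groupproj} of $T_\chi$ into the trace yields an oscillatory integral over $T^*M \times G$ whose phase is essentially $\langle x - g\cdot x,\xi\rangle/h$ in local coordinates, and whose amplitude carries $b(x,\xi)\,(\rho_h^\pm \circ p)(x,\xi)\,\overline{\chi(g)}$ together with lower order contributions from the expansion of the functional calculus. The critical set of this phase, viewed as a function of $(g,\xi)$, is exactly the momentum zero locus $\Omega = \Jbb^{-1}(\{0\})$, stratified by isotropy types.

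The heart of the argument is then to apply the singular equivariant stationary phase theorem of \cite{ramacher10, ramacher15a} to this integral. The resolution-of-singularities construction underlying that theorem yields a leading term equal to the integral on the right-hand side of \eqref{eq:intBres} over the reduced hypersurface $\Sigma_c \cap \Omega_{\text{reg}}$, together with a remainder in which the factor $(\log h^{-1})^{\Lambda-1}$ records the iterated blow-ups needed to resolve the isotropy stratification, and the polynomial order $h^{1/(2\kappa+4)}$ reflects the stationary-phase gain in the singular normal directions. Uniformity in $\chi \in \Wh$ is governed by the sup-norms $\|D\overline{\chi}\|_\infty$ of derivatives of $\chi$, and Definition \ref{def:semiclfam} converts these into a controlled $h^{-\vartheta N}$ loss; optimizing the number of derivatives on $\chi$ (up to order about $2\kappa+3$) and on $\rho_h^\pm$ against the $h^{1/(2\kappa+4)}$ gain produces the exponent $(1-(2\kappa+3)\vartheta)/(2\kappa+4)-\delta$, which is the source of both the threshold $\delta < 1/(2\kappa+4)$ and the allowed growth rate $\vartheta < (1-(2\kappa+4)\delta)/(2\kappa+3)$. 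The hard part will be establishing this singular stationary-phase estimate with remainder uniform simultaneously in the growing character $\chi$ and in the $h$-dependent amplitude $\rho_h^\pm \circ p$; once this is achieved, a standard Tauberian argument exploiting the monotonicity of the spectral counting function converts the smoothed asymptotics into the sharp-window statement, contributing the remaining $\mathrm O(h^\delta)$ term in the remainder.
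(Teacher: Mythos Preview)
Your overall architecture is correct and matches the paper's: rewrite the sum as a sum of traces $\sum_{\chi\in\Wh}(d_\chi[\pi_\chi|_H:\mathds{1}])^{-1}\Tr\big(T_\chi\, B\,\rho_h(P(h))\big)$, realize $\rho_h(P(h))$ via an $h$-dependent functional calculus, and feed the resulting oscillatory integral over $T^*U\times G$ into the singular equivariant stationary-phase theorem of \cite{ramacher10,ramacher15a}. However, two points of your outline do not work as written.

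\textbf{The smoothing scale needs a second free parameter.} You take $\rho_h^\pm$ with $\|(\rho_h^\pm)^{(k)}\|_\infty=\mathrm O(h^{-k\delta})$, i.e.\ transition width comparable to the window width $h^\delta$. Then the region where $\rho_h^+-\rho_h^-$ is supported has $p$-measure $\sim h^\delta$, and after the normalization by $h^{n-\kappa-\delta}$ the sharp-versus-smooth error is only $\mathrm O(1)$, not $o(1)$. The paper instead smooths on the finer scale $h^{\lambda+\delta}$ with an \emph{auxiliary} parameter $\lambda>0$: one builds $f_{\lambda,\delta,h},g_{\lambda,\delta,h}\in\mathcal S^{\mathrm{bcomp}}_{\lambda+\delta}$, applies the equivariant trace formula at order $\lambda+\delta$ (this is where the constraint $\lambda+\delta<\frac1{2\kappa+3}$ enters), and then \emph{optimizes} $\lambda$ by balancing the boundary error $h^\lambda$ against the trace-formula remainder $h^{1-(2\kappa+3)(\lambda+\delta+\vartheta)-\delta}$. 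The equality $\lambda=\frac{1-(2\kappa+3)\vartheta}{2\kappa+4}-\delta$ is exactly what produces the exponent you quote; it is not an optimization over the number of derivatives, and the singular stationary phase itself only gives a gain of one power of $h$ (with $2\kappa+3$ derivatives hitting the amplitude), not $h^{1/(2\kappa+4)}$.

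\textbf{The $\mathrm O(h^\delta)$ term does not come from the Tauberian step.} With the correct scale $h^{\lambda+\delta}$, the sharp-versus-smooth error contributes $\mathrm O(h^\lambda)$, which after optimization becomes the second remainder term. The $\mathrm O(h^\delta)$ arises separately, from replacing the shell integral $h^{-\delta}\int_{\Omega_{\mathrm{reg}}\cap p^{-1}([c,c+h^\delta])}b\,\frac{d\Omega_{\mathrm{reg}}}{\mathrm{vol}_{\mathcal O}}$ by the hypersurface integral $\int_{\Sigma_c\cap\Omega_{\mathrm{reg}}}b\,\frac{d\mu_c}{\mathrm{vol}_{\mathcal O}}$; this is a Fubini-type localization (the paper's Lemma~\ref{lem:hypersurf} and Corollary~\ref{cor:hypersurf2}) and should appear explicitly in your outline. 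Finally, since $B$ need not be positive, the passage from smooth to sharp is not a monotonicity/Tauberian argument on $\langle Bu_j,u_j\rangle$ directly; the paper writes the exact identity $\Tr\,\Pi_\chi B\Pi_\chi=\Tr\,(f(P)B)_\chi+\mathfrak R$ and bounds $|\mathfrak R|\le\|B\|\,\Tr\,v(P(h))_\chi$ with $v=g(1-f)\ge 0$, then applies the trace formula once more to the positive operator $v(P(h))_\chi$.
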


When considering only a fixed isotypic component $\L_{\chi}^2(M)$ the statement becomes simpler, yielding the asymptotic formula
\begin{align*}
\begin{split}
(2\pi)^{n-\kappa} h^{n-\kappa-\delta}\hspace*{-1.75em}\sum_{\begin{array}{c}\scriptstyle 
j\in\mathbb{N}:\,  u_{j}(h)\in \L_{\chi}^{2}(M),\\
\scriptstyle E_{j}(h)\in [c,c+h^\delta]\end{array}}\hspace*{-1.5em}&\langle Bu_{j}(h),u_{j}(h)\rangle_{\L^2(M)}=d_{\chi}\,[ \pi_{\chi}|_{H}:\mathds{1}]\int_{\widetilde{\Sigma}_c}\widetilde {\eklm{b}}_G\, \d\widetilde{\Sigma}_c \\
 &+\; \mathrm{O}\Big(h^{\delta}+h^{\frac{1}{2\kappa+4}-\delta}\left (\log h^{-1}\right)^{\Lambda^{}-1}\Big),
\end{split}
\end{align*}
see Theorem \ref{thm:weyl31}. Via co-tangent bundle reduction, the integral in the leading term of \eqref{eq:intBres} can actually be viewed as an integral over the smooth bundle 
\bqn
S^\ast_{\widetilde p,c}(\widetilde M_\text{reg}):=\big\{(x,\xi) \in T^\ast (\widetilde M_\text{reg}): \widetilde p(x,\xi)=c\big\},
\eqn
where $\widetilde M_\text{reg}$ is the space of principal orbits in $M$ and $\widetilde p$ is the function induced by $p$ on $T^\ast (\widetilde M_\text{reg})$, compare Lemma \ref{lem:isomorphic}.  In case that $\widetilde M$ is an orbifold, the mentioned integral is given by an integral over the orbifold bundle $S^\ast_{\widetilde p,c}(\widetilde M):=\big\{(x,\xi) \in T^\ast \widetilde M: \widetilde p(x,\xi)=c\big\}$, compare Remark \ref{rem:3.2}.

We prove Theorem \ref{thm:weyl2}  by applying a semiclassical trace formula for $h$-dependent test functions, which is the content of  Theorem \ref{thm:weakweyl}. The trace formula established here is a generalization of  \cite[Theorem 3.1]{weich} for Schr\"odinger operators to compact $G$-manifolds, $h$-dependent test functions, and $h$-dependent families of isotypic components. In particular, the fact that we consider $h$-dependent test functions  is crucial for the applications in Part II of this work \cite{kuester-ramacher15b}, and involves dealing with considerable technical difficulties. The proof of Theorem \ref{thm:weakweyl} relies on recent work of K\"uster \cite{kuester15} in which a semiclassical functional calculus for $h$-dependent functions is developed. Ultimately, the proof of Theorem \ref{thm:weakweyl} reduces to the asymptotic description of certain oscillatory integrals  that have recently been studied in  \cite{ramacher10,ramacher15a} by Ramacher using resolution of singularities. The involved phase functions are given  in terms of the underlying $G$-action on $M$, and if singular orbits occur,  the corresponding  critical sets are no longer smooth, so that a partial  desingularization process has to be implemented  in order to obtain asymptotics with remainder estimates via  the stationary phase principle. The explicit remainder estimates obtained in \cite{ramacher15a} do not only account for the quantitative form of the remainder in (\ref{eq:intBres}). They also provide the qualitative basis for our study of $h$-dependent families of isotypic components and the localization to the hypersurface $\widetilde \Sigma_c$. Without  the remainder estimates from  \cite{ramacher15a}, only a fixed single isotypic component $\L^2_\chi(M)$ could be studied, and only eigenvalues $E_j(h)$ lying in a non-shrinking energy strip of the form $[c,c+\varepsilon]$ with a fixed $\varepsilon>0$ could be considered, compare Remark \ref{rem:4.2}.

\begin{figure}[h!]
\hspace{-1.2em}\begin{minipage}{0.50\linewidth}
{\includegraphics[scale=0.8]{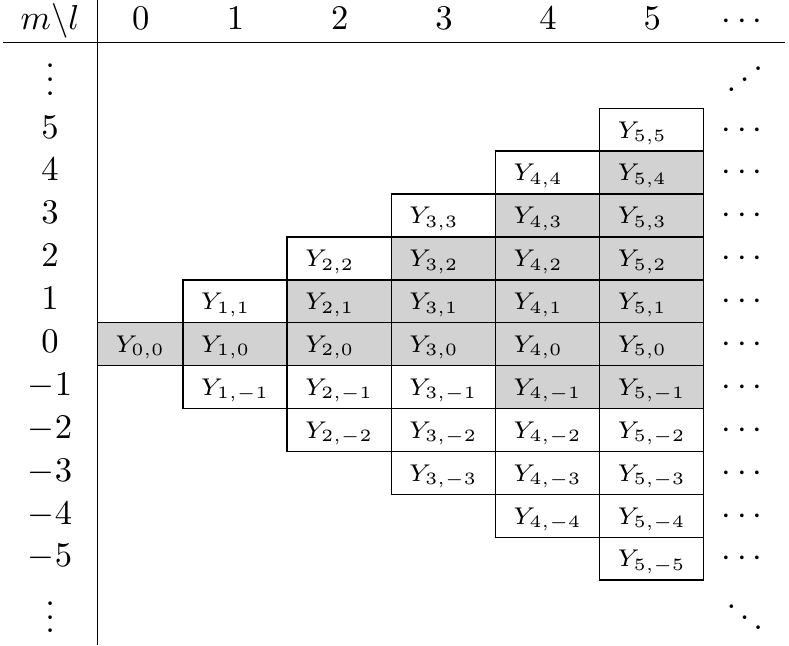}}
\centering\caption{A single isotypic component in the case $M=S^2$, $G=\SO(2)$. The $k$-th row spans the isotypic component $\L^2_{\chi_k}(S^2)$ and the $l$-th column represents the $l$-th eigenspace of $-\Delta$. \label{fig:ourapproach1}}
\end{minipage}\centering
\hspace{.5em} \begin{minipage}{0.50\linewidth}
\begin{overpic}[scale=0.8]{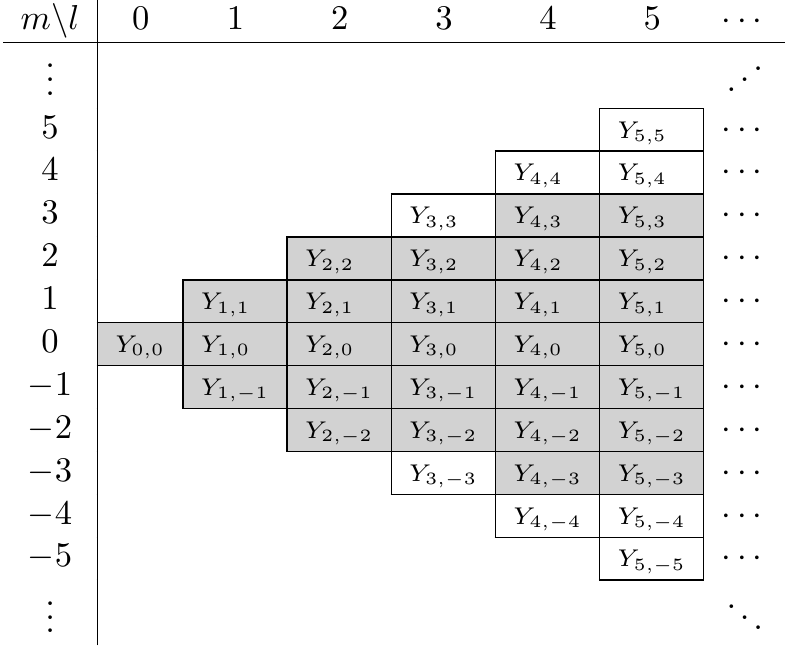}
\put(21.8,30,7){\includegraphics[scale=0.55]{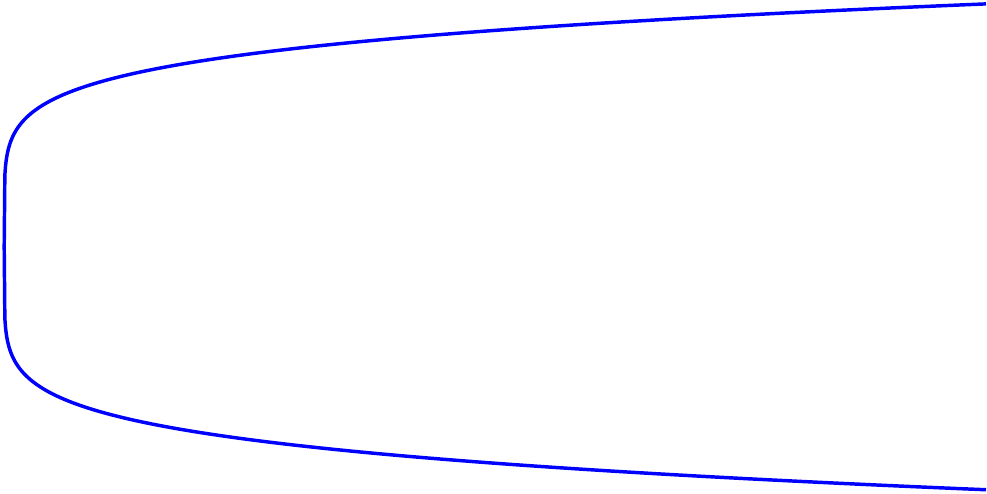}}
\end{overpic}
\centering
\caption{Spherical harmonics on $S^2$ in  isotypic components corresponding to a semiclassical character family with growth rate $\frac{1}{6}$. \label{fig:ourapproach2}}
\end{minipage}

\end{figure}

To illustrate our results, consider the standard $2$-sphere $S^2\subset \R^3$, acted upon by the group $\SO(2)$ of rotations around the $z$-axis in $\R^3$. This action has exactly two fixed points given by the north pole and the south pole of $S^2$. The orbits of all other points are circles, so  in this case we have $\kappa=1$ and $H=\{e\}$, the trivial group. The eigenvalues of $-\Delta$ on $S^2$ are given by the numbers $l(l+1)$, $ l=0,1,2,3\dots$, and the corresponding eigenspaces $\E_l$ are of dimension $2l+1$. They are  spanned by the spherical harmonics, given in spherical polar coordinates by
\begin{equation}
Y_{l,m}(\phi,\theta)={\scriptstyle\sqrt{\frac{2l+1}{4\pi}\frac{(l-m)!}{(l+m)!}}} P_{l,m}(\cos \theta)e^{im\phi}, \qquad 0\leq \phi<2\pi, \, 0 \leq \theta < \pi, \label{eq:Ylm}
\end{equation}
where $m \in \Z$, $|m|\leq l$, and  $P_{l,m}$ are the associated Legendre polynomials. Each subspace $\C \cdot Y_{lm}$ corresponds to an irreducible representation of $\SO(2)$, and each irreducible representation with character $\chi_k$  and $|k|\leq l$ occurs in the eigenspace $\E_l$ with multiplicity $1$, where $\chi_k$ is  as in Example \ref{ex:SO(2)}. Considering the limit $l\to \infty$ and the Laplacian $-\Delta$ is equivalent to studying  the limit $h\to 0$ and the semiclassical Laplacian $-h^2\Delta$. Figure \ref{fig:ourapproach1} depicts a single isotypic component, corresponding to a constant semiclassical character family. It means that one keeps $m$ fixed and studies the limit $l\to \infty$. On the other hand, the semiclassical character family from Example \ref{ex:SO(2)} for $\vartheta=\frac{1}{6}$  corresponds to the gray region in Figure \ref{fig:ourapproach2}. 

\subsection{Previously known results and outlook}
For general isometric and effective group actions the asymptotic distribution of the spectrum of an invariant operator along single isotypic components of $\L^2(M)$ was studied within the classical framework via heat kernel methods by Donnelly \cite{donnelly78} and Br\"uning--Heintze \cite{bruening-heintze79}. These methods allow to determine the leading term in the expansion, while  remainder estimates or growing families of isotypic components are  not accessible via this approach. On the other hand, using Fourier integral operator techniques, remainder estimates were obtained   for actions with orbits of the same dimension by  Donnelly \cite{donnelly78}, Br\"uning--Heintze \cite{bruening-heintze79}, Br\"uning \cite{bruening83}, Helffer--Robert \cite{helffer-robert84, helffer-robert86}, and Guillemin--Uribe \cite{guillemin-uribe90} in the classical setting, and El-Houakmi--Helffer \cite{helffer-elhouakmi91} and Cassanas \cite{cassanas}  in the semiclassical setting.  For general effective group actions, remainder estimates were derived by  Cassanas--Ramacher \cite{cassanas-ramacher09} and Ramacher \cite{ramacher10} in the classical, and by  Weich \cite{weich} in the semiclassical framework using resolution of singularites. The  idea of considering families of isotypic components that vary with the asymptotic parameter has been known since the 1980s. Thus, for  the Laplacian and free isometric group actions, so-called \emph{ladder subspaces} of $\L^2(M)$ and \emph{fuzzy ladders} have been considered in \cite{schrader, guillemin-uribe90} and \cite{Zelditch19921}, respectively. 
 \

Besides, there has  been much work in recent times  concerning the spectral theory of elliptic operators on orbifolds. Such spaces are locally homeomorphic to a quotient of Euclidean space by a finite group while, globally, any (reduced) orbifold is a quotient of a smooth manifold by a compact Lie group action with finite isotropy groups, that is, in particular, with no singular isotropy types \cite{adem-leida-ruan, moerdijk-mrcun}. As it turns out, the theory of elliptic operators on orbifolds is then essentially equivalent to the theory of invariant elliptic operators on manifolds carrying the action of a compact Lie group with finite isotropy groups \cite{bucicovschi, dryden-et-al, stanhope-uribe, kordyukov12}. 

Thus, in all the previously examined cases, except in \cite{cassanas-ramacher09}, \cite{ramacher10}, and \cite{weich}, no singular orbits occur and our work can be viewed as part of an attempt to develop a  spectral theory of elliptic operators on  general singular $G$-spaces. 

In Part II of this work \cite{kuester-ramacher15b} we will rely on Theorem \ref{thm:weyl2} to describe the ergodic properties of eigenfunctions of Schr\"odinger operators belonging to $h$-dependent families of isotypic components of $\L^2(M)$ in case that the symmetry-reduced Hamiltonian flow 
is ergodic, and prove a corresponding equivariant quantum ergodicity theorem. As further lines of research, it would be interesting to see whether our results can be generalized to $G$-vector bundles, as well as manifolds with boundary and non-compact situations. Furthermore, it might be fruitful to study the possible $h$-dependent families of irreducible representations from a more conceptional point of view.
Note that the non-equivariant semiclassical Weyl law suggests that for a given non-trivial $G$-action on $M$ there should be a critical growth rate $\vartheta_0>0 $ beyond which there  is not only a quantitatively, but also a qualitatively different asymptotic behavior.

\section{Background}\label{sec:prelim}

In this section we describe the setup from the introduction in more detail.

\subsection{Semiclassical analysis}\label{subsec:semiclassical}
In what follows, we shall give a brief overview of the theory of semiclassical symbol classes and pseudodifferential operators on smooth manifolds. For a detailed introduction, we refer the reader to \cite[Chapters 9
and 14]{zworski} and \cite[Chapters 7 and 8]{dimassi-sjoestrand}.  Semiclassical analysis developed out of the theory of pseudodifferential operators, a thorough exposition of which can be found in \cite{shubin}. 
An important feature that distinguishes semiclassical analysis from usual pseudodifferential operator theory is that instead of the usual symbol functions and corresponding operators, one considers families of symbol functions and pseudodifferential operators indexed by a global parameter \[h\in (0,1].\]
Essentially, the definitions of those families are obtained from the usual definitions by substituting in the symbol functions the cotangent variable $\xi$ by $h\xi$. To begin, we introduce semiclassical symbol classes which are invariant under pullbacks along diffeomorphisms. For $m\in \R$ and $\delta\in[0,\frac{1}{2})$,  one sets
\begin{align}
\nonumber S_\delta^{m}(\R^{n}):=&\Big\{a:\R^{2n}\times(0,1]\to \C: \; a(\cdot,h)\in \Cinft(\R^{2n})\;\forall\; h\in (0,1]\;\text{ and for all multiindices } s,t\;\\ &\quad \exists\; C_{s,t}>0:  |\partial^s_x\partial^t_\xi a(x,\xi,h)|\leq
C_{s,t}\left<\xi\right>^{m-|t|}h^{-\delta(|s|+|t|)}\;\forall\,x\in \R^n,\;h\in (0,1]\Big\}.\label{eq:kohnnirenberg}
\end{align}
The symbol classes (\ref{eq:kohnnirenberg}) generalize the classical Kohn-Nirenberg classes. In the literature one usually encounters only the case $\delta=0$. In our context\footnote{See \cite{dyatlov} for more applications of the symbol classes (\ref{eq:kohnnirenberg}).} it is natural to allow $\delta>0$, since we want to consider symbol functions that localize at points or other sets of low dimension as $h\to 0$. One can associate to each symbol function $s\in S_\delta^{m}(\R^{n})$  a continuous linear operator
\[
\text{Op}_h(s): \mathcal{S}(\R^n)\to \mathcal{S}(\R^n),\qquad f\mapsto \text{Op}_h(s)(f),
\]
on the Schwartz space $\S(\R^n)$ of rapidly decreasing functions on $\R^n$ given by
\bq
\text{Op}_h(s)(f)(x):=\frac 1 {(2\pi h)^n} \int_{\R^n} \int_{\R^n} e^{\frac i h (x-y)\cdot\eta}
s\Big(\frac{x+y}{2},\eta,h\Big) f(y) \d y \d \eta.\label{eq:defweyl}
\eq
Here $d\eta$ and $dy$ denote Lebesgue measure in $\R^n$.  This so-called \emph{Weyl-quantization} is motivated by the fact that the classical Hamiltonian $H(x,\xi)=\xi^2$ should correspond to the quantum Laplacian $-h^2 \Delta$, and real-valued symbol functions  to symmetric or  essentially self-adjoint operators.\\
Let now $M$ be a smooth manifold of dimension $n$, and let $\{(U_\alpha,\gamma_\alpha)\}_{\alpha\in\mathcal{A}}$, $\gamma_\alpha:M \supset U_\alpha \to V_\alpha\subset \R^{n}$, be an atlas for $M$. Then one defines
\begin{align}
\nonumber S_\delta^{m}(M):=&\Big\{a:T^*M\times (0,1]\to \C,\; a(\cdot, h)\in \Cinft(T^*M)\;\forall\;h\in(0,1],\\
&\quad (\gamma_\alpha^{-1}) ^*(\varphi_\alpha a)\in S_\delta^m(\R^{n})\;\,\forall\;\alpha\in\mathcal{A},\; \forall\;\varphi_\alpha\in \CT(U_\alpha)\Big\},\label{def:symbclass2}
\end{align}
where $(\gamma_\alpha^{-1})^\ast$ denotes the pullback along $\gamma^{-1}_\alpha\times \text{id}$.\footnote{The pullback is defined as follows: 
First, one identifies $T^*V_\alpha$ with $V_\alpha\times\R^n$. Then, given $a:T^*M\times (0,1]\to \C$, the function $(\varphi_\alpha a)\circ \big(\gamma^{-1}_\alpha\times (\partial{\gamma_\alpha^{-1}})^T\times \text{id}\big):V_\alpha\times\R^n\times(0,1]\to \C$ has compact support inside $V_\alpha$ in the first variable, and hence extends by zero to a function  
  $\R^{2n}\times(0,1]\to \C$. This function is defined to be $(\gamma_\alpha^{-1}) ^*(\varphi_\alpha a)$.} 
The definition is independent of the choice of atlas, and we call an element of 
$S_\delta^{m}(M)$ a \textit{symbol function}. We use the short hand notations $S_\delta^{-\infty}(M):=\cap_{m\in \R}S_\delta^{m}(M)$ and $S^m(M):=S_0^m(M)$, where $m\in \R\cup\{-\infty\}$. For such $m$ and $\delta\in [0,\frac{1}{2})$, we call a $\C$-linear map $P: \CT(M)\to \Cinft(M)$ \emph{semiclassical pseudodifferential operator on $M$ of order $(m,\delta)$} if the following holds:
\begin{enumerate}
\item For some (and hence any) atlas $\{(U_\alpha,\gamma_\alpha)\}_{\alpha\in\mathcal{A}}$,
$\gamma_\alpha:M \supset U_\alpha \to V_\alpha\subset \R^{n}$ of $M$ there exists a collection of symbol functions $\{s_\alpha\}_{\alpha\in\mathcal{A}}\subset S_\delta^m(\R^{n})$ such that for any two functions $\varphi_{\alpha,1}, \varphi_{\alpha,2} \in \CT(U_\alpha)$, it holds
\[
\varphi_{\alpha,1} P(\varphi_{\alpha,2} f)=\varphi_{\alpha,1} \mathrm{Op}_h(s_\alpha)((\varphi_{\alpha,2} f)\circ\gamma_\alpha^{-1})\circ\gamma_\alpha.
\]
\item For all $\varphi_{1}, \varphi_{2} \in \CT(M)$ with $\supp \varphi_{1}\cap \supp \varphi_{2}=\emptyset$, one has
\[
\norm{\Phi_1\circ P \circ \Phi_2}_{\Sob^{-N}(M)\to \Sob^N(M)}=\mathrm{O}(h^\infty)\quad\forall \;N=0,1,2,\ldots,
\]
where $\Phi_j$ is given by pointwise multiplication with $\varphi_j$ and $\Sob^N(M)$ is the $N$-th Sobolev space.
\end{enumerate}
When $\delta=0$, we just say \emph{order} $m$ instead of \emph{order} $(m,0)$. We denote by $\Psi^m_{h,\delta} (M)$ the $\C$-linear space of all semiclassical pseudodifferential operators on $M$ of order $(m,\delta)$, and we write $$\Psi_h^{m}(M):=\Psi^m_{h,0}(M),\qquad \Psi_h^{-\infty}(M)= \bigcap_{m\in \Z} \Psi_h^{m}(M).$$ 
From the classical theorems about pseudodifferential operators one infers in particular the following relation between symbol functions and semiclassical pseudodifferential operators, see \cite[page 86]{hoermanderIII}, \cite[Theorem 14.1]{zworski}, \cite[page 383]{dyatlov}. There is a $\C$-linear map 
\begin{equation}
\Psi^m_{h,\delta} (M) \to S_\delta^{m}(M)/\left(h^{1-2\delta}S_\delta^{m-1}(M)\right),\quad P  \mapsto \sigma(P) \label{eq:sigma},
\end{equation}
which assigns to a semiclassical pseudodifferential operator its \emph{principal symbol}. Moreover, for each choice of atlas  $\{(U_\alpha,\gamma_\alpha)\}_{\alpha\in\mathcal{A}}$ of $M$ and a partition of unity $\{\varphi_\alpha\}_{\alpha \in \mathcal{A}}$ subordinate to $\{U_\alpha\}_{\alpha \in \mathcal{A}}$, there is a $\C$-linear map called \emph{quantization}, written
\begin{equation}
S_\delta^{m}(M) \to \Psi^m_{h,\delta} (M), \quad s \mapsto \mathrm{Op}_{h,\left\{U_\alpha,\varphi_\alpha\right\}_{\alpha \in \mathcal{A}}}(s).\label{eq:op}
\end{equation}
Any choice of such a map induces the same $\C$-linear bijection
\begin{align}
\Psi^m_{h,\delta} (M)/\left(h^{1-2\delta}\Psi^{m-1}_{h,\delta}(M)\right) & \begin{matrix}\sigma\\ \rightleftarrows \\ \mathrm{Op}_h \end{matrix} S_\delta^{m}(M)/\left(h^{1-2\delta}S_\delta^{m-1}(M)\right),\label{eq:qmaps}
\end{align}
which means in particular that the bijection exists and is independent from the choice of atlas and partition of unity. We call an element in a quotient space $S_\delta^{m}(M)/\left(h^{1-2\delta}S_\delta^{m-1}(M)\right)$ a \textit{principal symbol}, whereas we call the elements of $S_\delta^{m}(M)$ \textit{symbol functions}, as introduced above. For a semiclassical pseudodifferential operator $A$, we use the notation 
\[
\sigma(A)=[a]
\]
to express that the principal symbol $\sigma(A)$ is the equivalence class in $S_\delta^{m}(M)/\left(h^{1-2\delta}S_\delta^{m-1}(M)\right)$ defined by a symbol function $a\in S_\delta^{m}(M)$. 
\subsection{Semiclassical functional calculus for Schr\"odinger operators on compact manifolds}
\label{sec:schroedinger}
In what follows, let  $M$ be a connected compact Riemannian manifold without boundary, and $P(h)$ a Schr\"odinger operator on $M$ as in   \eqref{eq:13.08.15}. 
There exists a well-known functional calculus \cite[Theorems 14.9 and 14.10]{zworski} for such operators by which  for each $f\in \mathcal{S}(\mathbb{R})$ the operator $f(P(h))$ defined by the spectral theorem
for unbounded self-adjoint operators is an element in $\Psi_h^{-\infty}(M)$. As can be shown, $f(P(h))$ extends to a bounded operator $f(P(h)):\L^2(M)\to \L^2(M)$ of trace class, and the principal symbol of $f(P(h))$ is represented by $f\circ p$, where  $p$ is given by \eqref{eq:13.08.15a}. Moreover, 
\bqn 
\tr  f(P(h))= \frac{1}{(2\pi h)^n}\intop_{T^*M} (f\circ p)\d(T^*M)\;+\;\mathrm{O}\big(h^{-n+1}).
\eqn
Nevertheless, for our purposes, this is not enough, and we  need a functional calculus for Schr\"odinger operators where the test function $f$ is allowed to depend on $h$. Such a calculus has been recently developed by  K\"uster \cite{kuester15} giving explicit formulas for the global operator $f(P(h))$ in terms of  locally defined quantizations in $\R^n$  with trace norm remainder estimates. The class of $h$-dependent test functions that we will consider is given by $\bigcup_{\delta\in[0,\frac{1}{2})}\mathcal{S}^{\mathrm{bcomp}}_{\delta}$, where for each $\delta\in[0,\frac{1}{2})$ the symbol $\mathcal{S}^{\mathrm{bcomp}}_{\delta}$ denotes the set of all families $\{f_h\}_{h\in (0,1]}\subset \CT(\R)$ such that 
\begin{itemize}
\item[(1)] the $j$-th derivative $f_h^{(j)}$ of $f_h$ fulfills
\[
\norm{f_h^{(j)}}_\infty=\mathrm{O}(h^{-\delta j})\qquad\text{as }h\to 0,\quad j=0,1,2,\ldots;
\]
\item[(2)] there is a compact Interval $I\subset \R$ such that the support of $f_h$ is contained\footnote{This condition can be relaxed to allow also supports with growing diameter and varying position as $h\to 0$, see \cite{kuester15}.} in $I$ for all $h$.
\end{itemize}
One then has the following
\begin{thm}[{\cite[Theorems 4.4, 4.5 and Corollary 4.6]{kuester15}}]\label{thm:semiclassfunccalcmfld}Let $\delta\in [0,\frac{1}{2})$ and $\rho_h\in \mathcal{S}^{\mathrm{bcomp}}_{\delta}$. Then, for small $h$ the operator $\rho_h(P(h))$ is a semiclassical pseudodifferential operator on $M$ of order $(-\infty,\delta)$, and its principal symbol is represented by $\rho_h\circ p$. In addition, consider an operator $B\in \Psi_{h,\delta}^0(M)\subset \B(\L^2(M))$ with principal symbol $[b]$, where $b\in S_\delta^0(M)$. Then the following assertions hold:
\begin{itemize}
\item
For any finite atlas $\{U_\alpha,\gamma_\alpha\}_{\alpha \in \mathcal{A}}$, $\gamma_\alpha: U_\alpha\stackrel{\simeq}{\to} \R^n$, with a subordinated compactly supported partition of unity $\{\varphi_\alpha\}_{\alpha \in \mathcal{A}}$ on $M$  and a family of functions $\{\overline \varphi_\alpha,\overline{\overline \varphi}_\alpha,\overline{\overline {\overline\varphi}}_\alpha\}_{\alpha \in \mathcal{A}}\subset \CT(M)$ such that $\supp \overline \varphi_\alpha,\supp \overline{\overline \varphi}_\alpha, \supp \overline{\overline{\overline\varphi}}_\alpha\subset U_\alpha$ and $\overline \varphi_\alpha\equiv 1$ on $\supp  \varphi_\alpha$, $\overline{\overline \varphi}_\alpha\equiv1$ on $\supp  \overline \varphi_\alpha$, and $\overline{\overline{\overline\varphi}}_\alpha\equiv 1$ on $\supp  \overline{\overline \varphi}_\alpha$, there is a number $h_0\in(0,1]$ such that for each $N\in \N$ there is a collection of symbol functions $\{r_{\alpha,\beta,N}\}_{\alpha,\beta\in \mathcal{A}}\subset h^{1-2\delta}S^0_\delta(\R^n)$ and an operator $\mathfrak{R}_{N}(h)\in \B(\L^2(M))$ such that one has\begin{multline}
B\circ \rho_h(P(h))(f)=\sum_{\alpha \in \mathcal{A}}\overline \varphi_\alpha\cdot\mathrm{Op}_h(u_{\alpha,0})\big((f\cdot \overline{\overline {\overline \varphi}}_\alpha)\circ \gamma_\alpha^{-1}\big)\circ\gamma_\alpha\\
+\sum_{\alpha,\beta \in \mathcal{A}}\overline \varphi_\beta\cdot\mathrm{Op}_h(r_{\alpha,\beta,N})\big((f\cdot \overline{\overline \varphi}_\alpha\cdot\overline {\overline {\overline \varphi}}_\beta)\circ\gamma_\beta^{-1}\big)\circ\gamma_\beta  + \mathfrak{R}_{N}(h)(f)\qquad\forall\; f\in \L^2(M),\;h\in (0,h_0],\label{eq:newassertion33}
\end{multline}
where\footnote{Here $\tau:T^*M\to M$ denotes the bundle projection, and the composition with $(\gamma_\alpha^{-1},(\partial\gamma_\alpha^{-1})^T)$ means that we compose with the inverse chart $\gamma_\alpha^{-1}$ in the manifold variable and with the adjoint of its derivative in the cotangent space variable.}
\bq
u_{\alpha,0}=\big((\rho_h\circ p)\cdot b\cdot (\varphi_\alpha\circ \tau)\big)\circ(\gamma_\alpha^{-1},(\partial\gamma_\alpha^{-1})^T).\label{eq:princsymbsymb}
\eq
\item The operator $\mathfrak{R}_N(h)\in \B(\L^2(M))$ is of trace class and its trace norm fulfills
\bq
\norm{\mathfrak{R}_N(h)}_{\text{tr},\L^2(M)}=\mathrm{O}\big(h^{N}\big)\quad\text{as }h\to 0.\label{eq:tracenormestim343434}
\eq
\item For fixed $h\in (0,h_0]$, each symbol function $r_{\alpha,\beta,N}$ is an element of $\CT(\R^{2n})$ that fulfills 
\bq
\supp  r_{\alpha,\beta,N}\subset \supp   \big((\rho_h\circ p)\cdot b \cdot(\varphi_\alpha\circ \tau)\big)\circ(\gamma_\alpha^{-1},(\partial\gamma_\alpha^{-1})^T).\label{eq:wts798978}
\eq
\item The trace of $B\circ\rho_h(P(h))$ is given by the asymptotic formula
\begin{multline}
\tr \,\big[B\circ\rho_h(P(h))\big]=\tr \,\big[\rho_h(P(h))\circ B\big]\\
=\frac{1}{(2\pi h)^n}\int_{T^*M}b\cdot(\rho_h\circ p)\d(T^*M)\;+\;\mathrm{O}\Big(h^{1-2\delta-n}\text{vol}_{\,T^*M}\big[\supp \big(b\cdot (\rho_h\circ p)\big)\big]\Big)\qquad\text{as }h\to 0.\label{eq:result474747433}
\end{multline}
\end{itemize}
\end{thm}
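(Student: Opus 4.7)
The plan is to establish Theorem~\ref{thm:semiclassfunccalcmfld} via the Helffer--Sj\"ostrand formula combined with a semiclassical parametrix construction for the resolvent of $P(h)$, extended to $h$-dependent test functions by carefully tracking the losses introduced by $\rho_h \in \mathcal{S}^{\mathrm{bcomp}}_\delta$. First, I would construct an almost analytic extension $\tilde\rho_h \in \CT(\C)$ of $\rho_h$ satisfying, for every $N \in \N$,
\bqn
\modulus{\gdq \tilde\rho_h(z)} \leq C_N\, h^{-\delta N}\, \modulus{\Im z}^{N} \eklm{z}^{-N},
\eqn
the factor $h^{-\delta N}$ reflecting one loss of $h^{-\delta}$ per derivative. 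By the spectral theorem and Green's formula,
\bqn
\rho_h(P(h)) = -\frac{1}{\pi} \int_\C \gdq \tilde\rho_h(z)\,(z - P(h))^{-1} \, dL(z),
\eqn
with $dL$ denoting Lebesgue measure on $\C$.

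Next, I would construct in each chart $(U_\alpha,\gamma_\alpha)$ a semiclassical parametrix $R(z,h)$ for $(z - P(h))^{-1}$ whose Weyl symbol has an asymptotic expansion $r_0 + h r_1 + \cdots$ with $r_0(x,\xi,z) = (z-p(x,\xi))^{-1}$ and each $r_j$ meromorphic in $z$ with poles only on the range of $p$, the errors being $\mathrm{O}(h^\infty)$ in suitable operator norms polynomially controlled in $\modulus{\Im z}^{-1}$. Substituting this into the Helffer--Sj\"ostrand formula and absorbing every factor $\modulus{\Im z}^{-k}$ from the resolvent against the rapid vanishing of $\gdq \tilde\rho_h$ on $\R$ by taking $N$ large enough relative to $\delta$, one identifies $\rho_h(P(h))$ as an element of $\Psi_h^{-\infty}(M)$ of order $(-\infty,\delta)$ with principal symbol $[\rho_h \circ p]$.

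Then, composing with $B \in \Psi_{h,\delta}^0(M)$ chart by chart via the standard symbolic composition formula for Weyl quantizations yields the decomposition \eqref{eq:newassertion33}: the leading local symbol is $u_{\alpha,0}$ of \eqref{eq:princsymbsymb}, and the lower-order terms $r_{\alpha,\beta,N}$ arise from the finite Taylor expansion of the symbol product, which inherits the support property \eqref{eq:wts798978}. Off-diagonal contributions (different charts $\alpha, \beta$) are absorbed into $\mathfrak R_N(h)$ by pseudolocality together with the cut-off hierarchy $\varphi_\alpha \prec \overline\varphi_\alpha \prec \overline{\overline\varphi}_\alpha \prec \overline{\overline{\overline\varphi}}_\alpha$. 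The trace norm bound \eqref{eq:tracenormestim343434} follows by combining the usual $\L^2$-boundedness estimate for semiclassical pseudodifferential operators with compactly supported symbols of order $-\infty$ (giving a size $h^{-n}$) with the fact that $r_{\alpha,\beta,N}$ can be made to lie in $h^{N + n + 1 - 2\delta} S^0_\delta$ by expanding sufficiently far. Finally, the trace formula \eqref{eq:result474747433} follows by evaluating the Weyl kernel on the diagonal, which gives the integral $\frac{1}{(2\pi h)^n}\int b\cdot(\rho_h\circ p)\,d(T^*M)$ plus an error of order $h^{1-2\delta}$ times the leading term, the latter being controlled by the volume of the support of $b \cdot (\rho_h \circ p)$.

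The main obstacle is the quantitative accounting in the almost analytic extension: the loss $h^{-\delta j}$ in derivatives of $\rho_h$ must be balanced against the resolvent blow-up near the spectrum, and the support localization of the symbols must be propagated with sufficient precision through Weyl composition so that the trace remainder scales with $\vol_{T^*M}[\supp(b \cdot \rho_h \circ p)]$ rather than with the full phase space volume. This is precisely the refinement over standard (single-function) functional calculi \cite[Theorems~14.9--14.10]{zworski} that makes Theorem~\ref{thm:semiclassfunccalcmfld} suitable for treating spectral windows of shrinking width $h^\delta$ in Theorem~\ref{res:1}.
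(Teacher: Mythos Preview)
The paper does not contain a proof of this theorem: it is stated as a citation to \cite[Theorems 4.4, 4.5 and Corollary 4.6]{kuester15} and used as a black box in the proofs of Theorems~\ref{thm:weakweyl} and~\ref{thm:weyl2}. There is therefore no in-paper proof to compare your proposal against.

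That said, your outline via the Helffer--Sj\"ostrand formula, almost analytic extension with $h^{-\delta}$-controlled derivative bounds, chartwise parametrix for $(z-P(h))^{-1}$, and subsequent Weyl composition with $B$ is the standard and correct strategy for this kind of $h$-dependent functional calculus, and is almost certainly the route taken in \cite{kuester15}. The only point where your sketch is slightly imprecise is the trace-norm bound \eqref{eq:tracenormestim343434}: you write that $r_{\alpha,\beta,N}$ can be pushed into $h^{N+n+1-2\delta}S^0_\delta$ by expanding far enough, but in the statement the $r_{\alpha,\beta,N}$ are fixed in $h^{1-2\delta}S^0_\delta$ and it is the \emph{remainder operator} $\mathfrak R_N(h)$ alone that absorbs the $\mathrm{O}(h^N)$ tail. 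So the bookkeeping is: truncate the symbolic expansion at a finite order independent of $N$ to produce the $r_{\alpha,\beta,N}$ (all of size $h^{1-2\delta}$), and push the genuinely small tail---coming from the high-order terms in the parametrix and the off-diagonal pseudolocal errors---into $\mathfrak R_N(h)$, whose trace norm is then bounded via the trace-class estimate $\norm{\Op_h(a)}_{\mathrm{tr}} \lesssim h^{-n}\norm{a}$ applied to a symbol of size $h^{N+n}$. This is a minor reorganization of what you wrote, not a gap.
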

\subsection{Actions of compact Lie groups and symplectic reduction}
\label{sec:2.3}
In what follows, we recall some essential facts from
the general theory of compact Lie group actions on smooth manifolds. For a detailed introduction, we refer the reader to  \cite[Chapters I, IV, VI]{bredon}. Let $\Xbf$ be a
smooth manifold of dimension $n$ and $G$ a  Lie group acting
locally smoothly on $\Xbf$.
For $x\in \Xbf$,  denote by $G_{x}$ the isotropy group and by $G\cdot x=\O_x$
the $G$-orbit through $x$ so that
\begin{equation*}
G_{x} =  \{g\in G,\; g\cdot x=x\},\qquad
\O_x =G\cdot x  =  \{g\cdot x\in \Xbf,\; g\in G\}.
\end{equation*}
Note that $G\cdot x$ and $G/G_{x}$ are homeomorphic.
The equivalence class of an orbit $\O_x$ under equivariant homeomorphisms, written $[\O_x]$, 
is called its \textit{orbit type}. The conjugacy class 
of a stabilizer group $G_{x}$ is called its \textit{isotropy type}, and written $(G_{x})$. 
If $K_{1}$ and $K_{2}$ are closed subgroups of $G,$ a partial ordering
of orbit and isotropy types is given by
\[
[G/K_{1}]\leq[G/K_{2}]\Longleftrightarrow(K_{2})\leq(K_{1})\Longleftrightarrow K_{2}\textrm{ is conjugate to a subgroup of }K_{1}.
\]
For any closed subgroup $K\subset G$, one denotes by $\Xbf(K):=\mklm{x \in  \Xbf: G_x \sim K} $ the subset of points of isotropy type $\left(K\right)$.  

Assume now that $G$ is compact. The set of all orbits is denoted by $\Xbf/G$, and equipped with the quotient topology it becomes a topological  Hausdorff space \cite[Theorem I.3.1]{bredon}. In the following we shall assume that it is connected. 
One of the central results in the theory of compact group actions is the \emph{principal orbit theorem} \cite[Theorem IV.3.1]{bredon}, which states that there exists a maximum orbit type $\left[\O_{\textrm{max}}\right]$
with associated minimal isotropy type $\left(H\right)$. Furthermore, $\Xbf(H)$  is open and dense
in $\Xbf$, and its image in $\Xbf/G$ is connected. We call $\left[\O_{\textrm{max}}\right]$ the \textit{principal orbit
type} of the $G$-action on $\Xbf$ and a representing orbit a \textit{principal orbit}. Similarly, we call the isotropy type $\left(H\right)$
and an isotropy group $G_{x}\sim H$ \textit{principal}. Casually,
we will identify orbit types with isotropy types and say \emph{an orbit of type $\left(H\right)$} or even \emph{an orbit of type $H$}, making no distinction between equivalence classes and their representants. The reduced space $\Xbf(H)/G$ is a smooth
manifold of dimension $n-\kappa$, where $\kappa$ is the dimension
of $\O_{\textrm{max}}$, since $G$ acts with only one orbit type on $\Xbf(H).$

Next, let us briefly recall some central results from the  theory of symplectic reduction of Marsden and Weinstein, Sjamaar, Lerman and Bates.
For a detailed exposition of these facts we refer the reader to \cite{ortega-ratiu}. Thus, let $(\Xbf,\omega)$ be a connected symplectic manifold with an action of an arbitrary  Lie
group $G$ that leaves $\omega$ invariant. In particular, we will be interested in the case where $\Xbf=T^\ast M$ is the co-tangent bundle of a smooth manifold $M$. Denote by $\mathfrak{g}$
the Lie algebra of $G$. Note that $G$ acts on $\mathfrak{g}$ via
the adjoint action and on $\mathfrak{g}^{*}$ via the co-adjoint
action. The group $G$ is said to act on $\Xbf$ in a \emph{Hamiltonian fashion}, if for each $X \in \g$  there exists a  $\Cinft$-function $\Jbb_{X}:\Xbf  \to  \mathbb{R}$ depending linearly on $X$ such that the fundamental vector field $\widetilde X$ on $\Xbf$ associated to $X$  is given by the Hamiltonian vector field of $\Jbb_X$. One then has
\[
d\Jbb_{X}=-\widetilde{X}\, \lrcorner\, \omega, 
\]
and one defines the \emph{momentum map} of the Hamiltonian action as the equivariant map
\begin{eqnarray*}
\Jbb:\Xbf  \to  \g^\ast,\qquad \Jbb(\eta)(X)=  \Jbb_X(\eta),
\end{eqnarray*}
see \cite[Section 4.5]{ortega-ratiu}.
It is clear from the definition that a momentum map is unique up to
addition of a constant function, and it satisfies $\Ad^\ast(g^{-1}) \circ \Jbb= \Jbb \circ g$.  
In this case $(\Xbf,\omega, \Jbb)$ is called a \emph{Hamiltonian $G$-space}, and one  defines 
\bqn 
\Omega:=\Jbb^{-1}(\mklm{0}), \qquad \widetilde \Omega:=\Omega/G.
\eqn
Unless the $G$-action on $\Xbf$ is free, the \emph{reduced space} or \emph{symplectic quotient} $\widetilde \Omega$ will in general not be a smooth manifold, but a topological quotient space. One can show that  $\widetilde \Omega$  possesses a stratification into smooth symplectic manifolds \cite[Theorem 8.1.1]{ortega-ratiu}. 
Let now   $M$ be a connected compact boundary-less  Riemannian manifold of dimension $n$, carrying an isometric  effective action of a compact connected 
Lie group $G$. Then  $\Xbf=T^\ast M$ constitutes a Hamiltonian $G$-space when endowed with the canonical symplectic structure and the $G$-action induced from the smooth action on $M$, and one has
\begin{equation}
\Omega=\Jbb^{-1}(\{0\})=\bigsqcup_{x\in M}\textrm{Ann}\,T_x(G\cdot x)\label{eq:omega},
\end{equation}
where $\textrm{Ann}\,V_x\subset T^\ast_x M$ denotes the annihilator of a  subspace $V_x \subset T_xM$. Clearly, as soon as there are two orbits $G\cdot x$, $G\cdot x'$ in $M$ of different dimensions, their annihilators $\textrm{Ann}\,T_x(G\cdot x)$ and $\textrm{Ann}\,T_x(G\cdot x')$ have different dimensions, so that $\Omega$ is not a  vector bundle in that case. Further, let 
\begin{align*}
M_{\textrm{reg}}&:=M(H), \qquad 
\Omega_{\textrm{reg}}:=\Omega \cap (T^{*}M)(H),
\end{align*}
where $M(H)$ and $(T^{*}M)(H)$ denote the union of orbits of type $(H)$ in $M$ and $T^{*}M$, respectively. By the principal orbit theorem  $M_{\textrm{reg}}$ is open (and dense)  in $M$, and therefore  a smooth submanifold.  We then define
 $$\widetilde{M}_{\textrm{reg}}:=M_{\textrm{reg}}/G.$$
$\widetilde{M}_{\textrm{reg}}$   is a smooth boundary-less manifold, since $G$ acts on ${M}_{\textrm{reg}}$ with only one orbit type and $M_{\textrm{reg}}$ is open in $M$.  Moreover, because the Riemannian metric on $M$ is $G$-invariant, it induces a Riemannian metric on $\widetilde{M}_{\textrm{reg}}$. On the other hand, by \cite[Theorem 8.1.1]{ortega-ratiu}, $\Omega_{\textrm{reg}}$ is a smooth submanifold of $T^{*}M$, and  the quotient
$$
\widetilde{\Omega}_{\textrm{reg}}:= \Omega_{\textrm{reg}}/G
$$
possesses a unique differentiable structure such that the projection $\pi: \Omega_{\textrm{reg}} \rightarrow \widetilde{\Omega}_{\textrm{reg}}$ is a surjective submersion. Furthermore, there exists a unique symplectic form $\widetilde{\omega}$ on $\widetilde{\Omega}_{\textrm{reg}}$ such that $
\iota^\ast \omega = \pi^\ast \widetilde{\omega}$,
where $\iota:  \Omega_{\textrm{reg}} \hookrightarrow T^{*}M$ denotes the inclusion and $\omega$  the canonical symplectic form on $T^{*}M$. Consider now  the inclusion $j:(T^*M_\text{reg}\cap \Omega)/G\hookrightarrow \widetilde{\Omega}_\text{reg}$. The symplectic form $\widetilde{\omega}$ on $\widetilde{\Omega}_\text{reg}$ induces a symplectic form $j^*\widetilde{\omega}$ on $(T^*M_\text{reg}\cap \Omega)/G$.  We then have the following

\begin{lem}[\bf Singular co-tangent bundle reduction]\label{lem:isomorphic}
Let $\widehat{\omega}$ denote the canonical symplectic form on the co-tangent  bundle $T^*\widetilde{M}_\text{reg}$. Then the two $2(n-\kappa)$-dimensional symplectic manifolds 
\bqn
\left((T^*M_\text{reg}\cap \Omega)/G,j^*\widetilde{\omega}\right)\, \simeq \,  (T^*\widetilde{M}_\text{reg},\widehat{\omega})
\eqn
 are canonically symplectomorphic. 
\end{lem}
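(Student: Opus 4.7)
My plan is to adapt the classical cotangent bundle reduction at the zero level of the momentum map to the present non-free but constant-isotropy situation over $M_\text{reg}$. The essential observation is that the quotient map $\varpi:M_\text{reg}\to\widetilde M_\text{reg}$ is a smooth surjective submersion whose fibers are exactly the $G$-orbits, so that $\ker d\varpi_x=T_x(G\cdot x)$ for every $x\in M_\text{reg}$. Consequently, the condition $\xi\in\textrm{Ann}\,T_x(G\cdot x)$ defining $\Omega\cap T^*M_\text{reg}$ is precisely what allows $\xi$ to descend to a unique covector $\tilde\xi\in T^*_{\varpi(x)}\widetilde M_\text{reg}$ characterized by $\tilde\xi\circ d\varpi_x=\xi$. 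Moreover, the slice representation of a principal isotropy group is trivial, so $H$ fixes every element of that annihilator; hence $\Omega\cap T^*M_\text{reg}$ in fact already coincides with $\Omega_\text{reg}$ and carries a $G$-action of constant isotropy type $(H)$, making the quotient a smooth manifold of dimension $(2n-\kappa)-\kappa=2(n-\kappa)=\dim T^*\widetilde M_\text{reg}$.

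I would then introduce the smooth map
\begin{equation*}
\Phi:\Omega\cap T^*M_\text{reg}\longrightarrow T^*\widetilde M_\text{reg},\qquad (x,\xi)\longmapsto(\varpi(x),\tilde\xi),
\end{equation*}
verify its $G$-invariance (which follows from $\varpi\circ L_g=\varpi$ and a direct check $\widetilde{g\cdot\xi}=\tilde\xi$), and show that its fiberwise inverse $\eta\mapsto d\varpi_x^*\eta$ gives both surjectivity and injectivity on $G$-orbits. The induced bijection $\widetilde\Phi$ on the quotient is then a diffeomorphism; this is most transparent by identifying $\Omega\cap T^*M_\text{reg}$ with the pullback bundle $\varpi^*T^*\widetilde M_\text{reg}$, on which $G$ acts only through the base, so that dividing out $G$ literally returns $T^*\widetilde M_\text{reg}$.

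For the symplectic identification I would pull back the canonical Liouville one-form $\widehat\theta$ on $T^*\widetilde M_\text{reg}$ and compare it with $\iota^*\theta$, where $\theta$ is the Liouville one-form on $T^*M_\text{reg}$. Using the commutativity $\tilde\tau\circ\Phi=\varpi\circ\tau\circ\iota$ of the bundle projections together with the defining relation $\tilde\xi\circ d\varpi_x=\xi$, one finds for $v\in T_{(x,\xi)}(\Omega\cap T^*M_\text{reg})$
\begin{equation*}
(\Phi^*\widehat\theta)_{(x,\xi)}(v)=\tilde\xi\bigl(d\varpi_x(d\tau(d\iota(v)))\bigr)=\xi(d\tau(d\iota(v)))=(\iota^*\theta)_{(x,\xi)}(v),
\end{equation*}
so that $\Phi^*\widehat\theta=\iota^*\theta$ and, by exterior differentiation, $\Phi^*\widehat\omega=\iota^*\omega$. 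Combining this with the defining identity $\iota^*\omega=\pi^*\widetilde\omega$ for the reduced symplectic form and the factorization $\Phi=\widetilde\Phi\circ\pi$ yields $\widetilde\Phi^*\widehat\omega=j^*\widetilde\omega$, as required.

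I expect the main technical hurdle to lie in justifying the smoothness of $\widetilde\Phi^{-1}$ at the level of the quotient: for this I would invoke the equivariant slice theorem on $M_\text{reg}$ together with the triviality of the principal slice representation in order to exhibit local product trivializations of $\Omega\cap T^*M_\text{reg}\to(T^*M_\text{reg}\cap\Omega)/G$ compatible with the pullback bundle description. Once this bundle picture is established, the symplectic part reduces to the short tautological one-form computation displayed above.
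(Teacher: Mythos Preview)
Your approach is correct and is essentially the same as the paper's: both reduce to the classical cotangent bundle reduction theorem applied to $M_\text{reg}$, the paper by citing \cite{marsden} after invoking the uniqueness characterization $i^*\omega=\Pi^*j^*\widetilde\omega$ from \cite{ortega-ratiu}, while you spell out explicitly the Liouville one-form identity $\Phi^*\widehat\theta=\iota^*\theta$ that underlies that theorem. One small correction: your side remark that $\Omega\cap T^*M_\text{reg}$ ``coincides with $\Omega_\text{reg}$'' is false in general---over a pole of the $\SO(2)$-action on $S^2$, any nonzero covector lies in $\Omega_\text{reg}$ but not in $T^*M_\text{reg}$---so only the inclusion $\Omega\cap T^*M_\text{reg}\subset\Omega_\text{reg}$ holds; fortunately this inclusion is all you actually use, so the argument is unaffected.
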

\begin{rem}
\label{rem:2.6}
If $M=M_{\text{reg}}$, the previous lemma simply asserts that  
\bq
\label{eq:0304}
 T^\ast \widetilde M\simeq \widetilde \Omega=\Omega /G
 \eq  
  as symplectic manifolds. In case that $G$ acts on $M$ only with finite isotropy groups, $\widetilde M$ is an orbifold, and  the relation \eqref{eq:0304} holds as well, being the quotient presentation of the co-tangent bundle of $\widetilde M$ as an orbifold \cite{kordyukov12}. 
\end{rem}

\begin{proof}First, we apply \cite[Theorem 8.1.1]{ortega-ratiu} once to the manifold $T^*M$ and once to the manifold $T^*M_\text{reg}$. Noting that the momentum map of the $G$-action on $T^*M_\text{reg}$ agrees with the restriction of the momentum map of the $G$-action on $T^*M$ to $T^*M_\text{reg}$, we get that $j^*\widetilde{\omega}$ is the unique symplectic form on $(T^*M_\text{reg}\cap \Omega_\text{reg})/G$ which fulfills
\begin{equation}
i^*\omega=\Pi^*j^*\widetilde{\omega},\label{eq:uniqueform}
\end{equation}
where $\Pi:T^*M_\text{reg}\to T^*M_\text{reg}/G$ is the orbit projection, $i:T^*M_\text{reg}\cap\Omega_\text{reg}\hookrightarrow T^*M_\text{reg}$ is the inclusion, and $\omega$ is the canonical symplectic form on $T^*M_\text{reg}$.
The rest of the proof is now essentially the proof of the standard {co-tangent  bundle reduction theorem} \cite[Theorem 2.2.2]{marsden} for the manifold $M_\text{reg}$. A detailed proof of the present lemma is also given in \cite{kuester}.
\end{proof}

\subsection{Measure spaces and group actions}\label{subsec:spaces}
In what follows, we  give an overview of the spaces and measures that will be relevant in the upcoming sections. If not stated otherwise, measures are not assumed to be normalized.  As before, let  $M$ be a connected compact boundary-less  Riemannian manifold of dimension $n$ with  Riemannian volume density $dM$, carrying an isometric  effective action of a compact connected Lie group $G$ with Haar measure $dg$. Let $\kappa$ be the dimension of an orbit of principal type. Note that if $\text{dim }G>0$, $dg$ is equivalent to the normalized Riemannian volume density  on $G$ associated to any choice of left-invariant Riemannian metric on $G$. If $\text{dim }G=0$, which in our case implies that $G$ is trivial, $dg$ is the normalized counting measure. Consider further $T^\ast M$ with its canonical symplectic form $\omega$, endowed with the natural Sasaki metric. Then the  Riemannian volume density $d(T^\ast M)$ given by the Sasaki metric coincides with the symplectic  volume form $\omega^n/n!$, see \cite[page 537]{katok}. Next, if $\Omega:=\Jbb^{-1}(\{0\})$ denotes the zero level of the momentum map $\Jbb:T^\ast M \rightarrow \g^\ast$ of the underlying Hamiltonian action, we regard $\Omega_{\textrm{reg}}\subset T^*M$ as a Riemannian submanifold with Riemannian metric  induced by the Sasaki metric on $T^*M$, and denote the associated  Riemannian volume density by $\d\Omega_{\textrm{reg}}$.
Similarly, let
 \bq
 \label{eq:Ccal}
\mathcal{C}:=\{(\eta,g)\in\Omega\times G:\; g\cdot\eta=\eta\}.
\eq
As $\Omega$, the space $\mathcal{C}$ is not a manifold in general. We consider therefore the space $\textrm{Reg}\;\mathcal{C}$
of all regular points in $\mathcal{C}$, that is,  all points that have
a neighbourhood which is a smooth manifold. $\textrm{Reg}\;\mathcal{C}$
is a smooth, non-compact submanifold of $T^{*}M\times G$ of co-dimension $2\kappa$, and it is not difficult to see that 
\[
\textrm{Reg}\;\mathcal{C}=\{(\eta,g)\in\Omega\times G,\; g\cdot\eta=\eta,\;\eta\in \Omega_{\textrm{reg}}\},
\]
see e.g.\ \cite[(17)]{ramacher10}.  We then regard  $\text{Reg }\mathcal{C}\subset T^*M\times G$ as a Riemannian submanifold with Riemannian metric  induced by the product metric of the Sasaki metric on $T^*M$ and some left-invariant Riemannian metric on $G$, and denote the corresponding Riemannian volume density by $d(\text{Reg }\mathcal{C})$. In the same way, if $x\in M$ and $\eta \in T^*M$ are points,  the orbits $G\cdot x$ and $G\cdot\eta$ are smooth submanifolds of $M$ and $T^*M$, respectively, and if they have dimension greater than zero, we endow them with the corresponding  Riemannian orbit measures, denoted by  $d\mu_{G\cdot x}$ and $d\mu_{G\cdot \eta}$, respectively. If the dimension of an orbit is zero, it is a finite collection of isolated points, since $G$ is compact, and we define $d\mu_{G\cdot x}$ and $d\mu_{G\cdot \eta}$ to be the counting measures. Further, for any Riemannian $G$-space ${\bf X}$, we  define the orbit volume functions 
\bqn 
\vol_\O:{\bf X} \rightarrow (0,\infty), \quad x \mapsto \vol \O_x=\vol (G\cdot x), \qquad \vol:{\bf X}/G \rightarrow (0,\infty), \quad \O \mapsto \vol \O.
\eqn
Note that by definition we have  $\text{vol}>0$, $\vol_\O>0$ for all orbits, singular or not, since the orbit volume is defined using the induced Riemannian measure for smooth orbits and the counting measure for finite orbits.  An important property of the orbit measures is their relation to the normalized Haar measure on $G$. Namely, for any orbit $G\cdot x$ and any continuous function $f:G\cdot x\to \C$, we have
\begin{equation}
\int_{G\cdot x}f(x')\d\mu_{G\cdot x}(x')=\vol(G\cdot x)\int_{G}f(g\cdot x )\d g.\label{eq:orbithaar}
\end{equation}
To see why (\ref{eq:orbithaar}) holds, recall that there is a $G$-equivariant diffeomorphism $\Phi:G\cdot x\to G/G_x$. Then $\Phi_\ast  (d\mu_{G\cdot x})$ is a $G$-invariant finite measure on $G/G_x$. Similarly, if $\Pi: G\to G/G_x$ denotes the canonical projection,  $\Pi_\ast (dg)$ is also a $G$-invariant finite measure on $G/G_x$. Hence, $\Phi_\ast (d\mu_{G\cdot x})=C\cdot \Pi_\ast (dg)$ for some constant $C$ which is precisely given by $\text{vol}(G\cdot x)$, since $\Pi_\ast( dg)$ is normalized. Observing that $\intop_{G}f(gx)\d g=\intop_{G/G_x}f(gG_xx)\Pi_\ast( dg)$,  (\ref{eq:orbithaar}) follows.

We describe now the quotient spaces and  measures on them that will be relevant to us. Let  $d\widetilde{M}_{\textrm{reg}}$ be the Riemannian volume density on $\widetilde{M}_{\textrm{reg}}$ associated to the Riemannian metric on $\widetilde{M}_{\textrm{reg}}$ induced by the $G$-invariant metric on $M$.  Regarding the co-tangent  bundle $T^*\widetilde{M}_{\textrm{reg}}$, we endow it with the canonical symplectic structure and let  $d(T^*\widetilde{M}_{\textrm{reg}})$ be the corresponding  symplectic volume form. Again, it coincides with the Riemannian volume form given by the natural Sasaki metric on $T^*\widetilde{M}_{\textrm{reg}}$. Similarly, the symplectic stratum $\widetilde{\Omega}_{\textrm{reg}}$ carries a canonical symplectic form $\widetilde \omega$ from \cite[Theorem 8.1.1]{ortega-ratiu}, and   $d\widetilde{\Omega}_{\textrm{reg}}=\widetilde{\omega}^{(n-\kappa)}/(n-\kappa)!$ denotes the corresponding symplectic volume form. One can then show that  $d\widetilde{\Omega}_{\textrm{reg}}$ agrees with the Riemannian volume density associated to the Riemannian metric on $\widetilde{\Omega}_{\textrm{reg}}$ induced by the Riemannian metric on $\Omega_{\textrm{reg}}$, see Lemma \ref{lem:pushfwd}. Since orbit projections on principal strata define fiber bundles \cite[Theorem IV.3.3]{bredon}, Lemma \ref{lem:pushfwd} implies that $d\mu_{G\cdot x}$  and $d\mu_{G\cdot\eta}$  are the unique measures on the orbits in $M_\textrm{reg}$ and  $\Omega_\textrm{reg}$  such that
\begin{align}
\int_{M_\text{reg}}f(x)\,dM(x)&=\int_{\widetilde{M}_\text{reg}}\quad\int_{G\cdot x}f(x')\,d\mu_{G\cdot x}(x')\,d\widetilde{M}_{\textrm{reg}}(G\cdot x)\quad\forall\,f\in C(M_\text{reg}), \label{eq:prop80} \\
\int_{\Omega_\textrm{reg}}f(\eta)\,d(\Omega_\textrm{reg})(\eta) &=\int_{\widetilde{\Omega}_{\textrm{reg}}}\quad\int_{G\cdot\eta}f(\eta')\,d\mu_{G\cdot\eta}(\eta')\,d\widetilde{\Omega}_{\textrm{reg}}(G\cdot\eta)\quad\forall\,f\in C(\Omega_\text{reg}).\label{eq:prop81}
\end{align}

Next, hypersurfaces will be endowed with the  measures induced by the measures on the ambient manifold, see Lemma \ref{lem:hypersurf0}. Thus, for our Hamiltonian function $p:T^\ast M\rightarrow \R $ with regular value $c\in \R$, there is a canonical  measure $d\Sigma_c$ on the hypersurface $\Sigma_c={p}^{-1}(\{c\})$, induced by the symplectic volume form on $T^*M$, {or equivalently, by the Riemannian volume density associated to the Sasaki metric}. In the case $\Sigma_c=S^\ast M$ it is called the \emph{Liouville measure} and denoted by $d(S^*M)$. 
Similarly, for $S^*\widetilde{M}_{\textrm{reg}}$, the unit co-sphere bundle over $\widetilde{M}_{\textrm{reg}}$, the induced measure $d(S^*\widetilde{M}_{\textrm{reg}})$ is the Liouville measure, and for  a general hypersurface $\widetilde{\Sigma}_c:=\widetilde{p}^{-1}(\{c\})$, where  $\widetilde{p}$ is induced by $p$ and $\widetilde{\Omega}_{\textrm{reg}}$ is endowed with the measure $d\widetilde{\Omega}_\text{reg}$,  we denote the induced hypersurface measure by  $d\widetilde{\Sigma}_c$. Furthermore, since the intersection is transversal, $\Sigma_c\cap \Omega_\text{reg}$ is a smooth hypersurface of $\Omega_\text{reg}$, and therefore carries a  measure $\mu_c$   induced by $d\Omega_\text{reg}$. As the orbit projection $\Sigma_c\cap \, \Omega_{\text{reg}}\to\widetilde{\Sigma}_{c}$ is a fiber bundle, $\mu_c$ fulfills
\bq
\label{eq:2.15}
\intop_{\Sigma_c\cap \, \Omega_{\text{reg}}}f(\eta)\d \mu_c(\eta)=\intop_{\widetilde{\Sigma}_c}\quad\intop_{G\cdot \eta}f(\eta')\,d\mu_{G\cdot \eta}(\eta')\,d\widetilde{\Sigma}_{c}(G\cdot \eta)\quad\forall\,f\in C(\Sigma_c \cap \, \Omega_{\text{reg}}).
\eq
Finally, let $d{\widetilde M}:=\pi_{\ast}dM$ be the pushforward of $dM$ along the canonical projection $\pi:M\to \widetilde M:=M/G$. This means that, for $f\in C(\widetilde M)$, we have 
$$\int_{\widetilde M}f(\O)\d{\widetilde M}(\O)= \int_{M}f\circ\pi(x)\d M(x).$$

In what follows, we will use the orbit volume functions $\text{vol}$ and $\vol_\O$ together with the previously defined measures to form new {measures}.  This way we obtain  on $\widetilde M$ the measure  $\frac{d{\widetilde M}}{\vol}$, and on $\Sigma_c\cap \, \Omega_{\text{reg}}$ the measure $\frac{\mu_c}{\vol_\O}$. These measures are of fundamental importance in this paper and its sequel \cite{kuester-ramacher15b}. Finally,  for a measure space $(\Xbf,\mu)$  with $\mu(\Xbf)<\infty$ and a measurable function $f$ on $\Xbf$,  we shall use the common notation  
$$\fint_\Xbf f \d \mu := \frac 1 {\mu(\Xbf)} \int_\Xbf f \d\mu.$$

\subsection{Singular equivariant asymptotics}  As it will become apparent in the next section, our results rely on the description of  the asymptotic behavior of certain oscillatory integrals that were already examined in \cite{ramacher10, ramacher15a} while studying  the spectrum of an invariant elliptic operator. Thus, let $M$ be a Riemannian manifold of dimension $n$ carrying a smooth effective action of a connected compact Lie group $G$. Consider a chart $\gamma:M\supset U\overset{\simeq}{\to}V\subset\mathbb{R}^{n}$
on $M$, and write $(x,\xi)$ for an element in $T^*U \simeq U \times \R^n$ with respect to the canonical trivialization of the co-tangent  bundle over the chart domain. Let $a_\mu\in \CT( T^{*}U\times G)$ be an amplitude that might depend on a parameter  $\mu\in\mathbb{R}_{>0}$  such that $(x,\xi,g) \in \supp a_\mu$ implies $g \cdot x \in U$, and assume that there is a compact $\mu$-independent set $\mathcal{K}\subset T^{*}U\times G$ such that $\supp a_\mu\subset \mathcal{K}$ for each $\mu$. Further, consider the phase  function 
\bq\Phi(x,\xi,g):=\left\langle \gamma(x)-\gamma(g\cdot x ),\xi\right\rangle, \qquad (x,\xi, g) \in \supp a_\mu,
\label{eq:phase} 
\eq
where $\left\langle \cdot,\cdot\right\rangle $ denotes the Euclidean scalar
product on $\mathbb{R}^{n}$. It represents a global analogue of the momentum map, and oscillatory integrals with phase function given by the latter have been examined in \cite{ramacher13} in the context of equivariant cohomology.  The phase function $\Phi$ has the critical set  \cite[Equation following (3.3)]{ramacher10}
 \begin{align*}
\Crit(\Phi)&=\mklm{(x,\xi,g) \in T^\ast U \times G : (\Phi_{\ast})_{(x,\xi,g)}=0} \\
&= \mklm{( x  ,\xi,g) \in (\Omega \cap T^\ast U)\times G :  \,  g \cdot (x,\xi)=(x,\xi)}=\mathcal{C}\cap T^\ast U,
\end{align*}
with $\Ccal$ given as in \eqref{eq:Ccal},  and the central question  is   to describe the asymptotic behavior as $\mu \to + \infty$ of oscillatory integrals of the form
\begin{equation}
I(\mu)=\int_{T^{*}U}\int_{G}e^{i\mu\Phi(x,\xi,g)}a_\mu(x,\xi,g)\, dg\, d\left(T^{*}U\right)(x,\xi).\label{eq:integral}
\end{equation}
The major difficulty here resides in the fact that, unless the $G$-action on $T^\ast M$ is free, the considered momentum map is not a submersion, so that the zero set $\Omega$ of the momentum map and the critical set of the phase function $\Phi$ are not smooth manifolds. The stationary phase theorem can therefore not immediately be applied to the integrals  $I(\mu)$. Nevertheless, it was shown in \cite{ramacher10}   that by constructing a strong resolution of the set 
\bqn 
\Ncal=\mklm{(p,g) \in M \times G : g\cdot p = p}
\eqn
a partial desingularization $\mathcal{Z}: \widetilde {\bf X} \rightarrow {\bf X}:= T^\ast M \times G $ of the critical  set $\mathcal{C}$ can be achieved, and applying the stationary phase theorem in the resolution space $\widetilde {\bf X}$, an asymptotic description of $I(\mu)$ can be obtained. Indeed, the map $\mathcal{Z}$ yields 
a partial monomialization of  the local ideal $I_\Phi=(\Phi)$ generated by the phase function \eqref{eq:phase} according to  
\bqn 
\mathcal{Z}^\ast (I_\Phi) \cdot \E_{\tilde x, \widetilde {\bf X}} = \prod_j\sigma_j^{l_j}   \cdot\mathcal{Z}^{-1}_\ast(I_\Phi) \cdot \E_{\tilde x, \widetilde {\bf X}},
\eqn
 where $\E_{\widetilde{\bf X}}$ denotes the structure sheaf of rings of $\widetilde{\bf X}$, $\mathcal{Z}^\ast (I_\Phi)$ the total transform, and $\mathcal{Z}^{-1}_\ast(I_\Phi)$ the weak transform of $I_\Phi$, while the  $\sigma_j$ are local coordinate functions near each $\tilde x \in \widetilde {\bf X}$ and the $l_j$ natural numbers. As a consequence, the phase function factorizes locally according to $\Phi \circ \mathcal{Z} \equiv \prod \sigma_j^{l_j} \cdot  \tilde \Phi^ {wk}$,
and one shows  that  the weak transforms $ \tilde \Phi^ {wk}$ have clean critical sets. 
Asymptotics for the integrals $I(\mu)$ are  then obtained by pulling  them back to the resolution space $\widetilde {\bf X}$, and applying  the stationary phase theorem to the $\tilde \Phi^{wk}$  with the variables  $\sigma_j$ as parameters.  Thus, with  $\kappa$ and $\Lambda$ as in Section \ref{sec:13.08.15} one has
\begin{thm}[{\cite[Theorem 2.1]{ramacher15a}}]
\label{thm:main}
In the limit $\mu\to+\infty$ one has
\begin{align}
\begin{split}
\Big | I(\mu)&-\left(\frac{2\pi}{\mu}\right)^{\kappa}\intop_{\textrm{Reg} \, {\mathcal{C}}}\frac{a_\mu(x,\xi,g)}{\left|\det\Phi''(x,\xi,g)_{|N_{(x,\xi,g)}Reg\, {\mathcal{C}}}\right|^{1/2}}\, d(\textrm{Reg}\,\mathcal{\mathcal{C}})(x,\xi,g)\Big | \\
&\leq C \sup_{l \leq 2\kappa+3} \norm{D^l a_\mu}_{\infty}  \mu^{-\kappa-1}(\log \mu)^{{\Lambda^{}}-1},\label{eq:I(mu)}
\end{split}
\end{align}
where  $D^l$ is a differential operator of order $l$ independent of $\mu$ and $a_\mu$ and $C>0$ a constant independent of $\mu$ and $a_\mu$, too. The expression $\Phi''(x,\xi,g)_{N_{(x,\xi,g)}Reg\,\mathcal{\mathcal{C}}}$
denotes the restriction of the Hessian of $\Phi$ to the normal space
of $Reg\,\mathcal{\mathcal{C}}$ inside $T^{*}U\times G$
at the point $(x,\xi,g)$. In particular, the integral in  (\ref{eq:I(mu)})
exists.
\end{thm}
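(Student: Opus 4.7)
The central obstacle is that, unless the $G$-action on $M$ is free, the zero level $\Omega$ of the momentum map and the critical set $\Ccal\cap T^{*}U=\Crit(\Phi)$ fail to be smooth submanifolds, so the Hessian $\Phi''$ does not have constant rank along $\Ccal$ and the classical stationary phase theorem is inapplicable. The plan is to construct, via resolution of singularities, a proper map $\mathcal{Z}\colon \widetilde{\bf X}\to {\bf X}=T^{*}M\times G$ which partially desingularizes $\Ccal$, and then to apply a parameter-dependent stationary phase argument in the resolution space $\widetilde{\bf X}$.

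First I would use the slice theorem to analyze the local structure of $\Ncal=\mklm{(p,g)\in M\times G : g\cdot p = p}$: near a point $p\in M(H')$, the $G$-action is equivariantly modelled by the linear $H'$-action on the normal slice, so $\Ncal$ is locally cut out by polynomial equations whose singular locus coincides with the union of strata of lower isotropy type. A strong partial desingularization of $\Ncal$ can then be built by iterated blow-ups indexed by a total ordering of the occurring isotropy types, starting with the smallest stratum; by the principal orbit theorem this chain has length at most $\Lambda$. Lifting to $T^{*}M\times G$ produces $\mathcal{Z}$ with the key property that, near every $\tilde x\in \widetilde{\bf X}$, there are local coordinates in which
\[
\Phi\circ \mathcal{Z}\;=\;\sigma_{1}^{l_{1}}\cdots \sigma_{r}^{l_{r}}\,\widetilde{\Phi}^{wk},\qquad r\leq \Lambda -1,
\]
and the weak transform $\widetilde{\Phi}^{wk}$ has clean critical set in the variables transverse to the $\sigma_{j}$. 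Next I would pull \eqref{eq:integral} back to $\widetilde{\bf X}$ along $\mathcal{Z}$, freeze $\sigma=(\sigma_{1},\ldots,\sigma_{r})$, and apply the stationary phase theorem with parameters in the variables transverse to $\Crit(\widetilde{\Phi}^{wk})$ with effective large parameter $\mu\sigma_{1}^{l_{1}}\cdots \sigma_{r}^{l_{r}}$. For each fixed $\sigma$ this yields a leading contribution of order $(\mu\sigma_{1}^{l_{1}}\cdots \sigma_{r}^{l_{r}})^{-\kappa}$ together with a remainder of order $(\mu\sigma_{1}^{l_{1}}\cdots \sigma_{r}^{l_{r}})^{-\kappa-1}$ controlled by $\sup_{l\leq 2\kappa +3}\norm{D^{l}a_{\mu}}_{\infty}$. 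Integrating in $\sigma$ over a compact neighbourhood of the exceptional divisor produces the leading asymptotic, while the remainder contains integrals of the form $\int_{0}^{c}\sigma_{j}^{-l_{j}}\,d\sigma_{j}$ which, after cutting off at $\sigma_{j}\sim \mu^{-1/l_{j}}$ and balancing the oscillatory and non-oscillatory regimes, generate one factor of $\log\mu$ per blow-up layer, accumulating to at most $(\log\mu)^{\Lambda -1}$. Finally, the Jacobians introduced by the iterated blow-ups combine with $|\det (\widetilde{\Phi}^{wk})''|^{-1/2}$ to reproduce, upon push-forward to ${\bf X}$, the invariant expression $|\det \Phi''_{|N\mathrm{Reg}\,\Ccal}|^{-1/2}\,d(\mathrm{Reg}\,\Ccal)$ appearing in \eqref{eq:I(mu)}.

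The hard part will be the global bookkeeping of the logarithmic losses across the $\Lambda$-step blow-up chain combined with the uniformity in $a_{\mu}$: one must verify that only derivatives of order at most $2\kappa +3$ enter the remainder, a sharp bound reflecting both the codimension $2\kappa$ of $\mathrm{Reg}\,\Ccal$ in ${\bf X}$ and the additional derivatives required by the parameter stationary phase estimate. The passage from the desingularized model integral to the invariant expression over $\mathrm{Reg}\,\Ccal$ further requires a careful Fubini-type disintegration of the Sasaki and Haar measures against the normal Hessian, which is delicate because the shape of the transverse directions depends non-trivially on the local isotropy type traversed in the resolution.
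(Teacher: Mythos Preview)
The paper does not contain a proof of this theorem: it is quoted from \cite[Theorem 2.1]{ramacher15a}, and the paragraph preceding the statement merely summarizes the strategy of \cite{ramacher10,ramacher15a}. Your sketch is faithful to that summary --- resolution of $\Ncal$ lifted to $T^{*}M\times G$, partial monomialization $\Phi\circ\mathcal{Z}=\prod_j\sigma_j^{l_j}\widetilde\Phi^{wk}$, stationary phase in the transverse variables with the $\sigma_j$ as parameters, and accumulation of logarithmic losses from the exceptional divisors --- so at the level of outline you are in agreement with the cited source as described here.

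That said, what you have written is a programme, not a proof, and the paper gives you no finer benchmark to compare against. A few points where your sketch is imprecise relative to what the actual argument in \cite{ramacher10,ramacher15a} requires: the resolution is not built from an arbitrary total ordering of isotropy types but from a specific sequence of blow-ups along smooth centers (closures of minimal strata), and one must check at each step that the centers are indeed smooth and that the monomialization property propagates; the number of monomial factors $r$ at a given point of $\widetilde{\bf X}$ is bounded by $\Lambda-1$, but the bookkeeping that yields exactly $(\log\mu)^{\Lambda-1}$ rather than a higher power hinges on how the exceptional divisors intersect, which you do not address; and the claim that the pushed-forward leading term reassembles into the invariant density $|\det\Phi''_{|N\mathrm{Reg}\,\Ccal}|^{-1/2}\,d(\mathrm{Reg}\,\Ccal)$ is the most delicate identification in the whole argument and cannot be dispatched in one sentence. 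None of this is wrong in your outline, but none of it is done either.
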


The precise form of the remainder estimate in the previous theorem will allow us  to give remainder estimates also in  the case when the amplitude depends on $\mu$. To conclude, let us note the following 
\begin{lem}
\label{thm:integralconcrete}
Let $b\in C_{c}^{\infty}(\Omega\cap T^\ast U)$ and $\chi\in\widehat{G}$. Then \[
\intop_{\textrm{Reg}\,\mathcal{\mathcal{C}}}\frac{\overline{\chi}(g)b(x,\xi)}{\left|\det\Phi''(x,\xi,g)_{|N_{(x,\xi,g)}Reg\,\mathcal{\mathcal{C}}}\right|^{1/2}}\, d(\textrm{Reg}\;\mathcal{C})(x,\xi,g)=\left[ \pi_{\chi}|_{H}:\mathds{1}\right]\intop_{\Omega_{\textrm{reg}}}b(x,\xi)\frac{\d\Omega_{\textrm{reg}}(x,\xi)}{\vol_\O\left(G\cdot (x,\xi)\right)}.\]
\end{lem}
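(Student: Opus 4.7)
The strategy is to exploit the fact that $\textrm{Reg}\,\mathcal{C}$ fibers over $\Omega_{\textrm{reg}}$ via the projection $(\eta,g)\mapsto\eta$, with fiber at $\eta$ equal to the isotropy group $G_\eta$, and then to combine a coarea-type formula with the standard character identity for $\int_{G_\eta}\overline{\chi}(g)\,dg$. The only non-trivial input is the linear-algebra computation of the Hessian determinant on the normal bundle of $\textrm{Reg}\,\mathcal{C}$, which has essentially been carried out in \cite{ramacher10}; I would quote/adapt that computation here.

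\emph{Step 1: Fiber bundle disintegration.} Since $\Omega_{\textrm{reg}}=\Omega\cap (T^*M)(H)$ is a single-orbit-type stratum, the map $\textrm{Reg}\,\mathcal{C}\to \Omega_{\textrm{reg}}$, $(\eta,g)\mapsto\eta$, is a smooth fiber bundle with fiber $G_\eta$, where $G_\eta$ is conjugate to the principal isotropy $H$ for every $\eta$. Equipping $\textrm{Reg}\,\mathcal{C}$ with the Riemannian volume density induced from the product metric on $T^*M\times G$ (as in Section~\ref{subsec:spaces}), a standard coarea/submersion argument yields a disintegration of $d(\textrm{Reg}\,\mathcal{C})$ along the fibers as $d\Omega_{\textrm{reg}}(\eta)$ times an intrinsic measure $d\nu_\eta$ on $G_\eta$ (coming from the submanifold structure of $\{\eta\}\times G_\eta\subset T^*M\times G$) divided by an appropriate Jacobian.

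\emph{Step 2: Identification of the Hessian factor.} Next, I would show that the combination
\[
\frac{1}{\bigl|\det\Phi''(x,\xi,g)_{|N_{(x,\xi,g)}\textrm{Reg}\,\mathcal{C}}\bigr|^{1/2}}\,d(\textrm{Reg}\,\mathcal{C})(x,\xi,g)
\ =\ \frac{1}{\vol_\O(G\cdot(x,\xi))}\,d\Omega_{\textrm{reg}}(x,\xi)\otimes dg_\eta,
\]
where $dg_\eta$ denotes the \emph{normalized} Haar measure on $G_\eta$. This is the same identity of measures that appears in the singular equivariant stationary phase analysis of \cite{ramacher10,ramacher15a}: it rests on an explicit linear algebra calculation using the phase $\Phi(x,\xi,g)=\langle\gamma(x)-\gamma(g\cdot x),\xi\rangle$ and the observation that the normal space to $\textrm{Reg}\,\mathcal{C}$ at $(\eta,g)$ inside $T^*U\times G$ is naturally isomorphic to $T_\eta(G\cdot\eta)\oplus \mathrm{Ann}\,T_\eta(G\cdot\eta)^{\perp}$, on which the Hessian of $\Phi$ is encoded by the infinitesimal action of $G$. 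The orbit volume $\vol_\O(G\cdot\eta)$ enters through the comparison, via \eqref{eq:orbithaar}, between the Riemannian volume on the fiber $G_\eta$ and the normalized Haar measure $dg_\eta$.

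\emph{Step 3: Character integral and conclusion.} Once Step~2 is in place, the original integral becomes
\[
\int_{\Omega_{\textrm{reg}}}\!\!\!\!\!b(x,\xi)\,\frac{d\Omega_{\textrm{reg}}(x,\xi)}{\vol_\O(G\cdot(x,\xi))}\int_{G_\eta}\!\!\overline{\chi}(g)\,dg_\eta.
\]
By Schur orthogonality applied to the unitary representation $\pi_\chi|_{G_\eta}$, one has $\int_{G_\eta}\overline{\chi}(g)\,dg_\eta=\dim(V_\chi)^{G_\eta}=[\pi_\chi|_{G_\eta}:\mathds{1}]$, and since $G_\eta$ is conjugate to $H$ this equals $[\pi_\chi|_H:\mathds{1}]$, independent of $\eta$. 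Pulling this constant out of the integral yields the claimed identity.

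\emph{Main obstacle.} The only genuinely delicate point is Step~2, i.e.\ matching the intrinsic Hessian determinant to the orbit volume with the correct normalization. This requires carefully tracking three metrics (on $T^*M$, on $G$, and the induced one on $\textrm{Reg}\,\mathcal{C}$) and using that on the regular stratum the critical set $\mathcal{C}$ is clean, so no resolution of singularities is needed; all technical input is essentially contained in the computations of \cite{ramacher10} to which I would refer for the explicit linear algebra.
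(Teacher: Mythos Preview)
Your approach is essentially the same as the paper's: the paper simply refers to \cite[Lemma~7]{cassanas-ramacher09} and \cite[Section~3.3.2]{cassanas} for precisely the fiber-bundle disintegration of $\textrm{Reg}\,\mathcal{C}\to\Omega_{\textrm{reg}}$, the Hessian computation, and the character integral over the isotropy group that you outline in Steps~1--3.

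There is, however, one point you have not addressed. Your argument (and the one in \cite{cassanas-ramacher09}) works as stated only when $b\in C_c^\infty(\Omega_{\textrm{reg}}\cap T^\ast U)$, i.e.\ when the support of $b$ stays away from the singular strata of $\Omega$. The lemma, however, allows $b\in C_c^\infty(\Omega\cap T^\ast U)$, so $\supp b$ may meet $\Omega\setminus\Omega_{\textrm{reg}}$. In that case $\Omega_{\textrm{reg}}\cap\supp b$ is not compact, the orbit volume $\vol_\O(G\cdot(x,\xi))$ may tend to zero near the singular set, and neither the convergence of the right-hand side nor the validity of the disintegration formula in Step~1 is automatic. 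The paper handles this by an additional approximation argument via Fatou's Lemma, as in \cite[Lemma~9.3]{ramacher10}: one exhausts $\Omega_{\textrm{reg}}$ by compacta, applies the identity for compactly supported cutoffs of $b$, and passes to the limit. You should add this step to complete the proof.
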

\begin{proof}
By using a partition of unity, the proof essentially reduces to the one of  \cite[Lemma 7]{cassanas-ramacher09}, which is based on a result of  \cite[Section 3.3.2]{cassanas}, and  involves only local calculations. Furthermore, $b\in C_{c}^{\infty}(\Omega_{\textrm{reg}})$ is required there. However, similarly as in \cite[Lemma 9.3]{ramacher10}, one can use Fatou's Lemma to show that it suffices to require only $b\in C_{c}^{\infty}(\Omega)$.
\end{proof}

\section{An equivariant semiclassical trace formula}
\label{sec:15.06.2015}
In this section, we generalize the semiclassical trace formula \eqref{eq:result474747433} to an equivariant semiclassical trace formula, which will be crucial for proving the generalized equivariant Weyl law in the next section. As before, let $M$ be  a closed Riemannian manifold of dimension $n$ without boundary, carrying an isometric effective action of a compact connected Lie group $G$ such that the dimension $\kappa$ of the principal orbits is strictly smaller than $n$.  Recall the Peter-Weyl decomposition \eqref{eq:PW} of the left-regular $G$-representation in $\L^2(M)$, and consider a Schr\"odinger operator \eqref{eq:13.08.15} with $G$-invariant potential and Hamiltonian \eqref{eq:13.08.15a}. 

\begin{thm}[\bf Equivariant semiclassical trace formula for Schr\"odinger operators]\label{thm:weakweyl} 
Let $\delta\in \big[0,\frac{1}{2\kappa+3}\big)$,  $\rho_h\in \mathcal{S}^{\mathrm{bcomp}}_{\delta}$, and choose an operator $B\in \Psi_{h,\delta}^0(M)\subset \B(\L^2(M))$ with principal symbol $[b]$ represented by $b\in S_\delta^0(M)$. Consider further for each $h\in (0,1]$ the trace-class operator $$\rho_h(P(h))\circ B:\L^2(M)\to \L^2(M).$$Then, for each semiclassical character family $\{\Wh\}_{h\in(0,1]}$ with growth rate $\vartheta<\frac{1}{2\kappa+3}-\delta$ one has in the semiclassical limit $h\to 0$ the asymptotic formula 
\bq\frac{(2\pi h)^{n-\kappa}}{\#\Wh} \sum_{\chi\in \Wh}\frac{\tr \,\big(\rho_h(P(h))\circ B\big)_\chi}{d_{\chi}\left[ \pi_{\chi}|_{H}:\mathds{1}\right]}
 =\intop_{\Omega_\text{reg}}b\cdot(\rho_h\circ p)\,\frac{\d\Omega_{\textrm{reg}}}{\vol_\O} +\mathrm{O}\Big(h^{1-(2\kappa+3)(\delta+\vartheta)}\left (\log h^{-1}\right)^{{\Lambda^{}}-1}\Big).\label{eq:result338383838}
\eq 
\end{thm}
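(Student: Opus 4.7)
The plan is to combine the semiclassical functional calculus of Theorem~\ref{thm:semiclassfunccalcmfld}, which reduces each trace $\tr(\rho_h(P(h))\circ B)_\chi$ to a sum of local oscillatory integrals whose phase is exactly \eqref{eq:phase}, with the singular equivariant asymptotics of Theorem~\ref{thm:main} and the identification of the leading invariant integral in Lemma~\ref{thm:integralconcrete}; the $\chi$-sum will then be controlled via Definition~\ref{def:semiclfam}.

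First, using $T_\chi^2=T_\chi$, I rewrite $\tr(\rho_h(P(h))\circ B)_\chi=\tr(T_\chi\circ \rho_h(P(h))\circ B)$ and apply Theorem~\ref{thm:semiclassfunccalcmfld} with some fixed finite atlas $\{(U_\alpha,\gamma_\alpha)\}_{\alpha\in\A}$, a subordinate partition of unity $\{\varphi_\alpha\}$ and a cutoff chain $\{\overline\varphi_\alpha,\overline{\overline\varphi}_\alpha,\overline{\overline{\overline\varphi}}_\alpha\}$. This expands $\rho_h(P(h))\circ B$ as a principal part $\sum_\alpha\overline\varphi_\alpha\cdot\mathrm{Op}_h(u_{\alpha,0})\circ(\cdot)$ with $u_{\alpha,0}$ as in \eqref{eq:princsymbsymb}, a lower-order part with symbols in $h^{1-2\delta}S^0_\delta$, and a trace-norm remainder $\mathfrak R_N(h)=\mathrm{O}(h^N)$. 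Since $\|T_\chi\|_{\B(\L^2(M))}\leq d_\chi$, the $\mathfrak R_N$-contribution after division by $d_\chi[\pi_\chi|_H:\mathds{1}]$, summation over $\Wh$ and multiplication by $(2\pi h)^{n-\kappa}$ is $\mathrm{O}(h^N)$ for any $N$ and hence negligible.

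The heart of the argument is the evaluation of the trace of $T_\chi$ composed with each local quantization. Using the Schwartz kernel of $\mathrm{Op}_h(u_{\alpha,0})$ together with the definition \eqref{eq:groupproj} of $T_\chi$, restriction to the diagonal produces for each $\alpha$ an oscillatory integral of exactly the form \eqref{eq:integral},
\[
I_\alpha^\chi(h)=\frac{d_\chi}{(2\pi h)^n}\int_{T^*U_\alpha}\int_G e^{i\Phi(x,\xi,g)/h}\,a_\alpha(x,\xi,g;h)\,\overline{\chi(g)}\,dg\,d(T^*U_\alpha)(x,\xi),
\]
where $\Phi$ is the phase \eqref{eq:phase} written in the chart $\gamma_\alpha$ and $a_\alpha$ is a compactly supported amplitude built from $\rho_h\circ p$, $b$, the partition of unity and the cutoff functions. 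The $\delta$-symbol estimates on $\rho_h$ and $b$ together with Leibniz give $\|D^l(a_\alpha\cdot\overline\chi)\|_\infty\leq C h^{-\delta l}\|D^{\leq l}\overline\chi\|_\infty$. Applying Theorem~\ref{thm:main} with $\mu=h^{-1}$, the $\kappa$ factors of $h$ from the leading asymptotic combine with the $(2\pi h)^{-n}$ Weyl prefactor to give $(2\pi h)^{\kappa-n}$ times an integral over $\textrm{Reg}\,\mathcal{C}\cap T^*U_\alpha$; summing over $\alpha$ reassembles a global integral over $\textrm{Reg}\,\mathcal{C}$, to which Lemma~\ref{thm:integralconcrete} applies and yields $d_\chi[\pi_\chi|_H:\mathds{1}]\intop_{\Omega_{\textrm{reg}}}b\cdot(\rho_h\circ p)\,d\Omega_{\textrm{reg}}/\vol_\O$. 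After dividing by $d_\chi[\pi_\chi|_H:\mathds{1}]$ and multiplying by $(2\pi h)^{n-\kappa}$ this is the claimed main term of \eqref{eq:result338383838}, and since it is already independent of $\chi$, averaging over $\Wh$ leaves it unchanged. The lower-order quantizations pick up the extra factor $h^{1-2\delta}$ and are absorbed in the remainder.

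For the error, Theorem~\ref{thm:main} bounds the $I_\alpha^\chi(h)$-error by $Cd_\chi h^{-n}\cdot h^{-\delta(2\kappa+3)}\|D^{\leq 2\kappa+3}\overline\chi\|_\infty\cdot h^{\kappa+1}(\log h^{-1})^{\Lambda-1}$. Dividing by $d_\chi[\pi_\chi|_H:\mathds{1}]$, multiplying by $(2\pi h)^{n-\kappa}$, averaging over $\chi\in\Wh$ and invoking Definition~\ref{def:semiclfam} with $N=2\kappa+3$ yields the asserted $\mathrm{O}\big(h^{1-(2\kappa+3)(\delta+\vartheta)}(\log h^{-1})^{\Lambda-1}\big)$ remainder, which vanishes in the limit thanks to $\delta+\vartheta<1/(2\kappa+3)$. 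The main technical obstacle I expect is the precise coordination of three independent sources of $h$-dependence --- the $\delta$-symbol estimates in $b$ and in $\rho_h$, and the character growth $\vartheta$ --- so that the amplitude-derivative norm demanded by Theorem~\ref{thm:main} matches Definition~\ref{def:semiclfam} exactly and the exponents combine to the clean $(2\kappa+3)(\delta+\vartheta)$ form; the deeper geometric input (desingularisation of $\mathcal{C}$ and identification of the invariant integral) is already encapsulated in Theorem~\ref{thm:main} and Lemma~\ref{thm:integralconcrete}.
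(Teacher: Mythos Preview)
Your proposal is correct and follows essentially the same route as the paper: localize via Theorem~\ref{thm:semiclassfunccalcmfld}, apply the singular equivariant asymptotics of Theorem~\ref{thm:main} to the resulting local oscillatory integrals, identify the leading coefficient via Lemma~\ref{thm:integralconcrete}, and control the $\chi$-average through Definition~\ref{def:semiclfam}. The only discrepancies are cosmetic---the paper uses the sharper projection bound $\|T_\chi\|_{\B(\L^2(M))}\leq 1$ rather than $\leq d_\chi$ for the $\mathfrak R_N$-contribution, and it treats the lower-order symbol terms $r_{\alpha,\beta,N}\in h^{1-2\delta}S^0_\delta$ more explicitly by also running them through Theorem~\ref{thm:main} and Lemma~\ref{thm:integralconcrete} before absorbing them into the remainder.
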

\begin{rem}
\label{rem:3.2}The integral in the leading term can also be written as
$ 
\intop_{T^*\widetilde{M}_\text{reg}}(\rho_h\circ \widetilde{p}) \, \widetilde{\eklm{b}}_G \d (T^*\widetilde{M}_\text{reg})
$, with notation as in (\ref{eq:orbitalintegral}). To see this, one has to take into account  (\ref{eq:orbithaar}), (\ref{eq:prop81}) and the $G$-invariance of $p$, and apply Lemma \ref{lem:isomorphic} and Corollary \ref{cor:nullset2}. In case that $\widetilde M= M/G$ is an orbifold, the mentioned integral  is given by an integral over the orbifold co-tangent bundle $T^\ast \widetilde M$, see Remark \ref{rem:2.6}. 
\end{rem}
\begin{rem}If $G$ is trivial, the result agrees almost completely with (\ref{eq:result474747433}), the only difference being that the remainder estimate in (\ref{eq:result474747433}) is of order $h^{1-2\delta}$, while (\ref{eq:result338383838}) yields for trivial $G$ (i.e.\ $\kappa=\vartheta=0$, $\Lambda=1$) only the weaker order $h^{1-3\delta}$.
\end{rem}
\begin{proof} Let us consider first  a fixed $\chi\in \widehat G$. Introduce a finite atlas $\{U_\alpha,\gamma_\alpha\}_{\alpha \in \mathcal{A}}$, $\gamma_\alpha: U_\alpha\stackrel{\simeq}{\to} \R^n$, with a subordinated compactly supported partition of unity $\{\varphi_\alpha\}_{\alpha \in \mathcal{A}}$ on $M$  and a family of functions $\{\overline \varphi_\alpha,\overline{\overline \varphi}_\alpha,\overline{\overline{\overline \varphi}}_\alpha\}_{\alpha \in \mathcal{A}}\subset \CT(M)$ such that $\supp \overline \varphi_\alpha,\supp \overline{\overline \varphi}_\alpha,\supp \overline{\overline{\overline \varphi}}_\alpha \subset U_\alpha$ and $\overline \varphi_\alpha\equiv 1$ on $\supp  \varphi_\alpha$, $\overline{\overline \varphi}_\alpha\equiv1$ on $\supp  \overline \varphi_\alpha$, and $\overline{\overline{\overline \varphi}}_\alpha\equiv1$ on $\supp  \overline{\overline \varphi}_\alpha$. For each $\alpha\in\mathcal{A}$, set 
\bqn
u_{\alpha,0}:=\big((\rho_h\circ p)\cdot b\cdot (\varphi_\alpha\circ \tau)\big)\circ(\gamma_\alpha^{-1},(\partial\gamma_\alpha^{-1})^T).
\eqn
Clearly, $u_{\alpha,0}\in S^0_\delta(\R^n)$. Then, by Theorem \ref{thm:semiclassfunccalcmfld}, there is a number $h_0>0$ and for each $N\in \N$ a collection of symbol functions $\{r_{\alpha,\beta,N}\}_{\alpha,\beta\in \mathcal{A}}\subset h^{1-2\delta}S^0_\delta(\R^n)$ and an operator $\mathfrak{R}_{N}(h)\in \B(\L^2(M))$  such that for $h\in (0,h_0]$
\begin{multline*}
\big [T_\chi\circ B\circ \rho_h(P(h))\big ] (f)=\sum_{\alpha \in \mathcal{A}}T_\chi\Big(\overline \varphi_\alpha\cdot\text{Op}_h(u_{\alpha,0})\big((f\cdot \overline{\overline {\overline \varphi}}_\alpha)\circ \gamma_\alpha^{-1}\big)\circ\gamma_\alpha\Big)\\
+\sum_{\alpha,\beta \in \mathcal{A}}T_\chi\Big(\overline \varphi_\beta\cdot\text{Op}_h(r_{\alpha,\beta,N})\big((f\cdot \overline{\overline \varphi}_\alpha\cdot\overline {\overline {\overline \varphi}}_\beta)\circ\gamma_\beta^{-1}\big)\circ\gamma_\beta\Big)  + T_\chi\circ\mathfrak{R}_{N}(h)(f)
\end{multline*}
for all $f\in \L^2(M)$. Moreover, the operator $\mathfrak{R}_N(h)\in \B(\L^2(M))$ is of trace class, its trace norm fulfills
\bqn
\norm{\mathfrak{R}_N(h)}_{\tr,\L^2(M)}=\mathrm{O}\big(h^{N}\big)\quad\text{as }h\to 0,
\eqn
and for fixed $h\in (0,h_0]$ each symbol function $r_{\alpha,\beta,N}$ is an element of $\CT(\R^{2n})$ satisfying 
\bq
\supp  r_{\alpha,\beta,N}\subset \supp   \big((\rho_h\circ p)\cdot(\varphi_\alpha\circ \tau)\big)\circ(\gamma_\alpha^{-1},(\partial\gamma_\alpha^{-1})^T).\label{eq:have798978}
\eq
Inserting the definition \eqref{eq:groupproj} of the projection $T_\chi$ one sees with \eqref{eq:defweyl} that each of the operators 
\begin{align*}
A^\chi_\alpha:f&\mapsto T_\chi\Big(\overline \varphi_\alpha\cdot\text{Op}_h(u_{\alpha,0})\big((f\cdot \overline{\overline {\overline \varphi}}_\alpha)\circ \gamma_\alpha^{-1}\big)\circ \gamma_\alpha\Big),\\
 A^\chi_{\alpha,\beta,N}:f&\mapsto T_\chi\Big(\overline \varphi_\beta\cdot\text{Op}_h(r_{\alpha,\beta,N})\big((f\cdot \overline{\overline \varphi}_\alpha\cdot\overline {\overline {\overline \varphi}}_\beta)\circ\gamma_\beta^{-1}\big)\circ\gamma_\beta\Big)
\end{align*} 
has a smooth, compactly supported Schwartz kernel given respectively by
\begin{align*}
K_{A^\chi_\alpha}(x_1,x_2)&=\frac{d_\chi}{(2\pi h)^n} \int_{G}\int_{\R^n}\overline{\chi(g)}\overline \varphi_\alpha(g^{-1}\cdot x_1)\\
e^{\frac i h \eklm{\gamma_\alpha(g^{-1}\cdot x_1)-\gamma_\alpha(x_2),\eta}}&
 u_{\alpha,0}\Big(\frac{\gamma_\alpha(g^{-1}\cdot x_1)+\gamma_\alpha(x_2)}{2},\eta,h\Big)\overline{\overline {\overline \varphi}}_\alpha(x_2)  \d \eta\d g \big(\text{Vol}_{g_\alpha}(\gamma_\alpha(x_2))\big)^{-1},\\
K_{A^\chi_{\alpha,\beta,N}}(x_1,x_2)&=\frac{d_\chi}{(2\pi h)^n} \int_{G}\int_{\R^n}\overline{\chi(g)}\overline \varphi_\beta(g^{-1}\cdot x_1)\\
e^{\frac i h \eklm{\gamma_\beta(g^{-1}\cdot x_1)-\gamma_\beta(x_2),\eta}}& r_{\alpha,\beta,N}\Big(\frac{\gamma_\beta(g^{-1}\cdot x_1)+\gamma_\beta(x_2)}{2},\eta,h\Big)(\overline{\overline \varphi}_\alpha\cdot\overline {\overline {\overline \varphi}}_\beta)(x_2)  \d \eta\d g\big(\text{Vol}_{g_\beta}(\gamma_\beta(x_2))\big)^{-1},
\end{align*}
where $x_1,x_2\in M$, and $\text{Vol}_{g_\alpha}:\R^n\to (0,\infty)$ denotes the Riemannian volume density function in local coordinates, given by $\text{Vol}_{g_\alpha}(y)=\sqrt{\text{det }g_\alpha(y)}$, $g_\alpha$ being the matrix representing the Riemannian metric on $M$ over the chart $U_\alpha$.  Consequently, we obtain for arbitrary $N\in \N$ that 
\begin{align}
\begin{split}
\tr \,\big (\rho_h(P(h))\circ B\big )_\chi&= \tr \,\big [T_\chi\circ B\circ \rho_h(P(h))\big ]\\ &= \sum_{\alpha \in \mathcal{A}}\int_M K_{A^\chi_\alpha}(x,x)\d M(x)
+\sum_{\alpha,\beta \in \mathcal{A}}\int_M K_{A^\chi_{\alpha,\beta,N}}(x,x)\d M(x)   + \mathrm{O}(h^N),\label{eq:schontrace}
\end{split}
\end{align}
where we took into account that the trace is invariant under cyclic permutations and $T_\chi$ commutes with $\rho_h(P(h))$. Furthermore,   $|\tr Q| \leq \norm{Q}_{\tr,\L^2(M)}$ for any trace class operator $Q$, and 
\bqn
\norm{T_\chi \circ \mathfrak{R}_N(h)}_{\tr,\L^2(M)} \leq \norm{\mathfrak{R}_N(h)}_{\tr,\L^2(M)} \norm{T_\chi}_{\B(\L^2(M))} \leq \norm{\mathfrak{R}_N(h)}_{\tr,\L^2(M)}
\eqn
so that the $\mathrm{O}(h^N)$-estimate is independent of $\chi$. Let us consider first the integrals in the second summand  
\begin{align*}
\int_M K_{A^\chi_{\alpha,\beta,N}}(x,x)\d M(x)=\frac{d_\chi}{(2\pi h)^n} \int_{G}\int_{T^*U_\beta} e^{\frac i h \eklm{\gamma_\beta(x)-\gamma_\beta(g\cdot x),\xi}} u_{\alpha,\beta}^{\chi,N}(x,\xi,g,h)  \d g \d(T^*U_\beta)(x,\xi),
\end{align*}
where  $u_{\alpha,\beta}^{\chi,N}(\cdot,h)\in \CT\left(T^{*}U_{\beta}\times G\right)$ is given by
\begin{align}
\begin{split} u_{\alpha,\beta}^{\chi,N}(x,\xi,g,h)=&\overline{\chi(g)}\overline \varphi_\beta(x)   r_{\alpha,\beta,N}\Big(\frac{\gamma_\beta(x)+\gamma_\beta(g\cdot x)}{2},\xi,h\Big)(\overline{\overline \varphi}_\alpha\cdot\overline {\overline {\overline \varphi}}_\beta)(g\cdot x)J(x,g),\label{eq:u}
\end{split}
\end{align}
$J(x,g)$ being the Jacobian of the substitution $x=g\cdot x '$. By definition of the class $\mathcal{S}^\mathrm{bcomp}_\delta$ there is a compact interval $I\subset \R$  with $\supp \rho_h\subset I$ for all $h\in (0,1]$. Taking into account (\ref{eq:have798978}) and the definition (\ref{eq:13.08.15a}) of $p$ we see that the function $u_{\alpha,\beta}^{\chi,N}(\cdot,h)$ is supported inside a compact $h$-independent subset of $T^{*}U_{\beta}\times G$. Theorem \ref{thm:main} now implies for each $N\in\mathbb{N}$ the estimate
\begin{align}
\nonumber \bigg| (2\pi h)^{n}\int_M  & K_{A^\chi_{\alpha,\beta,N}}(x,x)\d M(x) 
- d_{\chi}\left(2\pi h\right)^{\kappa}{\intop_{\textrm{Reg}\,{\mathcal{C}}_\beta}\frac{u_{\alpha,\beta}^{\chi,N}(x,\xi,g,h)}{\left|\det\Phi''(x,\xi,g)_{|N_{(x,\xi,g)}Reg\,\mathcal{\mathcal{C}}_{\beta}}\right|^{1/2}}\, d(\textrm{Reg}\,{\mathcal{C}_\beta})(x,\xi,g)}\bigg|\\
&\leq C_{\alpha,\beta,N} \, d_\chi \sup_{l \leq 2\kappa+3} \big \|D^l u_{\alpha,\beta}^{\chi,N}\big \|_{\infty}h^{\kappa+1}\left (\log h^{-1}\right)^{{\Lambda^{}}-1},\label{eq:firsttrace}
\end{align}
where $\textrm{Reg}\,\mathcal{\mathcal{C}}_{\beta}=\{(x,\xi,g)\in(\Omega\cap T^{*}U_{\beta})\times G,\; g\cdot (x,\xi)=(x,\xi),\;x\in M(H)\}$, $D^l$ is a differential operator of order $l$, 
and $\Phi''(x,\xi,g)_{|N_{(x,\xi,g)}\textrm{Reg }\mathcal{C}_{\beta}}$ denotes the restriction of the Hessian of $\Phi(x,\xi,g)=\left\langle \gamma_{\beta}(x)-\gamma_{\beta}(g\cdot x ),\xi\right\rangle$ to the normal space of $\mathrm{Reg}\,\mathcal{\mathcal{C}}_{\beta}$ inside $T^{*}U_{\beta}\times G$ at the point $(x,\xi,g)$.  Note that the domain of integration of the integral
\bqn 
\mathfrak{A}^\chi_{\alpha,\beta,N}(h):=\intop_{\textrm{Reg}\,{\mathcal{C}}_\beta}\frac{u_{\alpha,\beta}^{\chi,N}(x,\xi,g,h)}{\left|\det\Phi''(x,\xi,g)_{|N_{(x,\xi,g)}Reg\,\mathcal{\mathcal{C}}_{\beta}}\right|^{1/2}}\, d(\textrm{Reg}\,{\mathcal{C}_\beta})(x,\xi,g)
\eqn
 contains only such $g$ and $x$ for which  $g\cdot x =x$, so that it simplifies to 
\begin{align}
\begin{split}
\mathfrak{A}^\chi_{\alpha,\beta,N}(h)=\intop_{\textrm{Reg}\,{\mathcal{C}_\beta}}\frac{\overline{\chi(g)} r_{\alpha,\beta,N}(\gamma_\beta(x),\xi,h)(\overline{\overline \varphi}_\alpha\cdot\overline \varphi_\beta)(x)}{\left|\det\Phi''(x,\xi,g)_{|N_{(x,\xi,g)}Reg\,{\mathcal{C}}_{\beta}}\right|^{1/2}}\, d(\textrm{Reg}\,\mathcal{\mathcal{C}_\beta})(x,\xi,g). \label{eq:integrand1111}
\end{split}
\end{align}
Here we used that $J(x,g)=1$ in the domain of integration, 
since the substitution $x'=g\cdot x $ is the identity when $g\cdot x =x$, and that $\overline{ \overline { \overline \varphi}}_{\beta}\equiv1$ on $\supp \overline \varphi_{\beta}$. By Lemma \ref{thm:integralconcrete} this simplifies further to
\begin{equation*}
\mathfrak{A}^\chi_{\alpha,\beta,N}(h)=\left[ \pi_{\chi}|_{H}:\mathds{1}\right]\intop_{\Omega_{\textrm{reg}}}r_{\alpha,\beta,N}(\gamma_\beta(x),\xi,h) (\overline{\overline \varphi}_\alpha\cdot\overline \varphi_\beta)(x)\frac{\d\Omega_{\textrm{reg}}(x,\xi)}{\vol\left(G\cdot (x,\xi)\right)}.
\end{equation*}
We obtain that there is a constant $C_{\alpha,\beta,N}>0$, independent of $h$ and $\chi$, such that
\bqn
\big|\mathfrak{A}^\chi_{\alpha,\beta,N}(h)\big|
 \leq C_{\alpha,\beta,N} \left[ \pi_{\chi}|_{H}:\mathds{1}\right]\norm{r_{\alpha,\beta,N}}_\infty \norm{(\overline{\overline \varphi}_\alpha\cdot\overline \varphi_\beta)}_\infty.
\eqn
As $r_{\alpha,\beta,N}$ is an element of $h^{1-2\delta}S^0_\delta(\R^n)$ we have that  $\norm{r_{\alpha,\beta,N}}_\infty=\mathrm{O}(h^{1-2\delta})$, so that we conclude
\[
\frac{\big|\mathfrak{A}^\chi_{\alpha,\beta,N}(h)\big|}{\left[ \pi_{\chi}|_{H}:\mathds{1}\right]} =\mathrm{O}\big(h^{1-2\delta}\big)\quad \text{as }h\to 0,
\]
the estimate being independent of $\chi$. Now, we combine this result with (\ref{eq:firsttrace}) and  the relation $r_{\alpha,\beta,N}\in h^{1-2\delta}S^0_\delta(\R^n)$ to obtain the estimate
\begin{multline}
\Big|\int_M K_{A^\chi_{\alpha,\beta,N}}(x,x)\d M(x)\Big|
=\mathrm{O}\Big(d_\chi\left[ \pi_{\chi}|_{H}:\mathds{1}\right]h^{1-2\delta-n+\kappa}\;+\\ d_\chi \, \sup_{l \leq 2\kappa+3} \big \|\widetilde{D}^l \overline \chi\big \|_{\infty}h^{-n+\kappa+1-\delta(2\kappa+3)+1-2\delta}\left (\log h^{-1}\right)^{{\Lambda^{}}-1}\Big),\label{eq:estim7373737373}
\end{multline}
where $\widetilde{D}^l$ is a differential operator of order $l$ on $G$ and the constant in the estimate is independent of $\chi$. Since $\mathcal{A}$ is finite, we conclude from (\ref{eq:schontrace}) and (\ref{eq:estim7373737373}) that
\begin{align}
\begin{split}
 \tr \,\big (\rho_h(P(h))\circ B \big )_\chi  &= \sum_{\alpha \in \mathcal{A}}\int_M K_{A^\chi_\alpha}(x,x)\d M(x)\\
&+ \mathrm{O}\Big(h^{1-2\delta-n+\kappa}d_\chi\Big[\left[ \pi_{\chi}|_{H}:\mathds{1}\right]+ \sup_{l \leq 2\kappa+3} \big \|\widetilde{D}^l \overline \chi\big \|_{\infty}h^{1-\delta(2\kappa+3)}\left (\log h^{-1}\right)^{{\Lambda^{}}-1}\Big]\Big),\label{eq:trace382958239}
\end{split}
\end{align}
with the constant in the estimate being independent of $\chi$. Let us now calculate the integrals in the leading term. Again, we can apply Theorem \ref{thm:main}, and by steps  analogous to those above we arrive at
\begin{align*}
\bigg| (2\pi h)^{n}\int_M &K_{A^\chi_\alpha}(x,x)\d M(x) 
- d_{\chi}\left(2\pi h\right)^{\kappa}\left[ \pi_{\chi}|_{H}:\mathds{1}\right]\intop_{\Omega_{\textrm{reg}}}u_{\alpha,0}(\gamma_\alpha(x),\xi,h) \overline \varphi_\alpha(x)\frac{\d\Omega_{\textrm{reg}}(x,\xi)}{\vol\left(G\cdot (x,\xi)\right)}\bigg|\\
&\leq C_{\alpha} d_\chi \, \sup_{l' \leq 2\kappa+3} \big \|\widetilde{D}^{l'} \overline \chi\big \|_{\infty} \sup_{l \leq 2\kappa+3} \big \|\hat D^l u_{\alpha,0}\big \|_{\infty}h^{\kappa+1}\left (\log h^{-1}\right)^{{\Lambda^{}}-1},
\end{align*}
where $C_\alpha$ is independent of $h$ and $\chi$, and $\hat D^l$ is a differential operator on $\R^{2n}$ of order $l$. Since $\rho_h$ is an element of $\mathcal{S}^{\mathrm{bcomp}}_{\delta}$ one has $\sup_{l \leq 2\kappa+3} \big \|\hat D^l u_{\alpha,0}\big \|_{\infty}=\mathrm{O}(h^{-(2\kappa+3)\delta})$, yielding the estimate 
\begin{align*}
\bigg| (2\pi h)^{n}\int_M K_{A^\chi_\alpha}(x,x)\d M(x) &
- d_{\chi}\left(2\pi h\right)^{\kappa}\left[ \pi_{\chi}|_{H}:\mathds{1}\right]\intop_{\Omega_{\textrm{reg}}}\big((\rho_h\circ p)\cdot b\cdot \varphi_\alpha\big)(x,\xi)\frac{\d\Omega_{\textrm{reg}}(x,\xi)}{\vol\left(G\cdot (x,\xi)\right)}\bigg|\\
&=\mathrm{O}\Big(h^{1+\kappa-(2\kappa+3)\delta} \, d_\chi \, \sup_{l \leq 2\kappa+3} \big \|\widetilde{D}^l \overline \chi\big \|_{\infty}\left (\log h^{-1}\right)^{{\Lambda^{}}-1}\Big)
\end{align*}
as $h\to 0$. Summing over the finite set $\mathcal{A}$, and using (\ref{eq:trace382958239}) together with $\overline \varphi_\alpha\equiv 1$ on $\supp  \varphi_\alpha$ and $\sum_{\alpha\in \mathcal{A}}\varphi_\alpha=1$ we finally obtain
\begin{multline}
\left(2\pi h\right)^{n-\kappa}\frac{\tr \,\big (\rho_h(P(h))\circ B \big )_\chi}{d_{\chi}\left[ \pi_{\chi}|_{H}:\mathds{1}\right]} 
= \intop_{\Omega_{\textrm{reg}}}\big((\rho_h\circ p)\cdot b\big)(x,\xi)\frac{\d\Omega_{\textrm{reg}}(x,\xi)}{\vol\left(G\cdot (x,\xi)\right)}\label{eq:resultfixedchi}\\
+\mathrm{O}\Big(h^{1-2\delta}+W_\kappa(\chi)h^{1-\delta(2\kappa+3)}\left (\log h^{-1}\right)^{{\Lambda^{}}-1}\Big),
\end{multline}
where the constant in the estimate is independent of $\chi$ and we introduced the notation
\bq
\label{eq:14.08.15}
W_\kappa(\chi):=\frac{\sup_{l \leq 2\kappa+3} \big \|\widetilde{D}^l \overline \chi\big \|_{\infty}}{\left[ \pi_{\chi}|_{H}:\mathds{1}\right]}.
\eq
Now, having established the result (\ref{eq:resultfixedchi}) for a fixed $\chi$, we know precisely how the remainder estimate depends on $\chi$ and we see that the leading term is independent of $\chi$. Thus, we can average for each $h\in (0,1]$ each summand in (\ref{eq:resultfixedchi}) over the finite set $\Wh$ to obtain the result
 \begin{multline*}
\frac{\left(2\pi h\right)^{n-\kappa}}{\#\Wh}\sum_{\chi\in\Wh}\frac{\tr \,\big (\rho_h(P(h))\circ B \big )_\chi}{d_{\chi}\left[ \pi_{\chi}|_{H}:\mathds{1}\right]} 
= \intop_{\Omega_{\textrm{reg}}}\big((\rho_h\circ p)\cdot b\big)(x,\xi)\frac{\d\Omega_{\textrm{reg}}(x,\xi)}{\vol\left(G\cdot (x,\xi)\right)}\\
+\mathrm{O}\Big(h^{1-2\delta}+\Big[\frac 1 {\# \W_h} \sum_{\chi\in \Wh}W_\kappa(\chi)\Big]h^{1-\delta(2\kappa+3)}\left (\log h^{-1}\right)^{{\Lambda^{}}-1}\Big).
\end{multline*}
To finish the proof, it suffices to observe that since  the growth rate of the family $\{\Wh\}_{h\in(0,1]}$ is $\vartheta$ we have $\frac 1 {\# \W_h} \sum_{\chi\in \Wh} W_\kappa(\chi)=\mathrm{O}\big(h^{-(2\kappa+3)\vartheta}\big)$ as $h\to 0$, and the assertion (\ref{eq:result338383838}) follows. 
\end{proof}

\section{A generalized equivariant semiclassical Weyl law}

Let the notation be as in the previous sections. We are now in the position to state and prove the main result of this paper.

\begin{thm}[\bf Generalized equivariant semiclassical Weyl law]
\label{thm:weyl2} 
Let $\delta\in \big(0,\frac{1}{2\kappa+4}\big)$ and choose an operator $B\in \Psi_{h,\delta}^0(M)\subset \B(\L^2(M))$ with principal symbol represented by $b\in S_\delta^0(M)$ and a semiclassical character family $\{\Wh\}_{h\in(0,1]}$ with growth rate $\vartheta<\frac{1-(2\kappa+4)\delta}{2\kappa+3}$.  Write
\[
J(h):=\big\{j\in \N:E_j(h)\in[c,c+h^\delta],\; \chi_j(h) \in \Wh\big\},
\]
where $\chi_j(h)\in \widehat G$ is defined by $u_j(h)\in \L^2_{\chi_j(h)}(M)$. Then, one has in the semiclassical limit $h\to 0$ 
\begin{align*}
\begin{split}
\frac{(2\pi)^{n-\kappa} h^{n-\kappa-\delta}}{\#\Wh}\sum_{J(h)}\frac{\langle Bu_{j}(h),u_{j}(h)\rangle_{\L^2(M)}}{d_{\chi_j(h)}\,[ \pi_{\chi_j(h)}|_{H}:\mathds{1}]}&=\intop_{{\Sigma}_c\cap \,\Omega_{\text{reg}}}b \, \frac {\d{\mu}_c}{\vol_\O}+\; \mathrm{O}\Big(h^{\delta}+h^{\frac{1-(2\kappa+3)\vartheta}{2\kappa +4}-\delta}\left (\log h^{-1}\right)^{{\Lambda^{}}-1}\Big).
\end{split}
\end{align*}
\end{thm}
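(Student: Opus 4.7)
The plan is to derive Theorem \ref{thm:weyl2} from the equivariant semiclassical trace formula, Theorem \ref{thm:weakweyl}, by applying the latter to a pair of smooth bumps $\rho_h^\pm \in \mathcal{S}^{\mathrm{bcomp}}_{\delta'}$ that sandwich the indicator $\chi_{[c,c+h^\delta]}$, with a suitable smoothing scale $\delta' > \delta$. The key observation is that the smoothing parameter in Theorem \ref{thm:weakweyl} is independent of the window exponent $\delta$ in the Weyl law, so it can be tuned freely to balance the two sources of error.

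I would set
\begin{equation*}
\delta' := \frac{1-(2\kappa+3)\vartheta}{2\kappa+4}.
\end{equation*}
The hypothesis $\vartheta<\frac{1-(2\kappa+4)\delta}{2\kappa+3}$ is equivalent to $\delta<\delta'$, while $\delta' < \frac{1}{2\kappa+3}$ holds automatically, so Theorem \ref{thm:weakweyl} applies at scale $\delta'$. A short computation yields the identity $1-(2\kappa+3)(\delta'+\vartheta) = \delta'$, so that after applying Theorem \ref{thm:weakweyl} with smoothing $\delta'$ and dividing by $h^\delta$, the trace-formula remainder becomes exactly $O(h^{\delta'-\delta}(\log h^{-1})^{\Lambda-1})$, matching the second error term in the statement.

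To apply the sandwich one needs positivity, so I would first decompose $B = B' + iB''$ with $B',B'' \in \Psi^0_{h,\delta}$ self-adjoint, and for $C > \|B\|$ rewrite $B$ as a linear combination of the positive self-adjoint operators $B'+CI$, $B''+CI$, and $I$, all of which remain in $\Psi^0_{h,\delta}$ with principal symbols $[\operatorname{Re} b + C]$, $[\operatorname{Im} b + C]$, $[1]$. By linearity it suffices to prove the asymptotic for a positive $A \in \Psi^0_{h,\delta}$ with symbol $[a]$. Construct $\rho_h^\pm \in \mathcal{S}^{\mathrm{bcomp}}_{\delta'}$ with $\rho_h^- \le \chi_{[c,c+h^\delta]} \le \rho_h^+$ and with $\operatorname{supp}(\rho_h^\pm - \chi_{[c,c+h^\delta]})$ of total Lebesgue measure $O(h^{\delta'})$. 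Positivity of $A$ then yields
\begin{equation*}
\tr(\rho_h^-(P(h)) A)_\chi \le \sum_{\substack{j \in J(h)\\ \chi_j(h)=\chi}} \langle Au_j(h), u_j(h)\rangle_{\L^2(M)} \le \tr(\rho_h^+(P(h)) A)_\chi,
\end{equation*}
and averaging over $\chi \in \mathcal{W}_h$ and applying Theorem \ref{thm:weakweyl} to each of $\rho_h^\pm$ reduces the proof to the integral estimate
\begin{equation*}
\frac{1}{h^\delta}\int_{\Omega_{\text{reg}}} a\cdot(\rho_h^\pm \circ p)\,\frac{d\Omega_{\text{reg}}}{\vol_\O} = \int_{\Sigma_c \cap \Omega_{\text{reg}}} a \,\frac{d\mu_c}{\vol_\O} + O\bigl(h^\delta + h^{\delta'-\delta}\bigr).
\end{equation*}

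For this final integral estimate I would use the coarea formula at the regular value $c$ of $p$, rewriting the left-hand side as $h^{-\delta}\int_\mathbb{R} \rho_h^\pm(E) F_a(E)\,dE$, where $F_a$ is the push-forward density along $p$ of $(a/\vol_\O)\,d\Omega_{\text{reg}}$ and $F_a(c)$ equals the claimed leading term. The $O(h^{\delta'-\delta})$ contribution arises from the Lebesgue mass of $\rho_h^\pm - \chi_{[c,c+h^\delta]}$, and the $O(h^\delta)$ contribution from Taylor-expanding $F_a$ about $c$ across the window of width $h^\delta$. The main obstacle here is regularity control on $F_a$: symbols in $S^0_\delta$ admit derivatives of size $h^{-\delta}$, so naive bounds on $F'_a$ degrade, and carefully extracting the desired $O(h^\delta)$ from the Taylor step will require exploiting the transversal intersection of $\Sigma_c$ with $\Omega_{\text{reg}}$ together with the symbol-class structure of $S^0_\delta$. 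Once this integral estimate is established, combining it with the sandwich bound and the trace-formula remainder yields the asymptotic for positive $A$, and hence for general $B$ by linearity.
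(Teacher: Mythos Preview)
Your strategy coincides with the paper's: sandwich the sharp cutoff $\chi_{[c,c+h^\delta]}$ by smooth bumps at a finer scale, apply the equivariant trace formula (Theorem \ref{thm:weakweyl}) to the smooth pieces, and balance the smoothing scale against the trace-formula remainder. Your $\delta'$ is exactly the paper's $\lambda+\delta$ after the optimization carried out at the end of its proof, so the arithmetic matches.

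The one substantive difference is in how positivity enters. You decompose $B$ into positive parts and, for a positive $A$, use the monotone sandwich $\tr(\rho_h^-(P)A)_\chi \le \sum_j \langle Au_j,u_j\rangle \le \tr(\rho_h^+(P)A)_\chi$. The paper instead handles a general $B$ directly: it writes the eigenvalue sum as $\tr\big(f_{\lambda,\delta,h}(P(h))\circ B\big)_\chi$ plus an explicit remainder $\mathfrak{R}_{\lambda,\delta,h}$, and bounds $|\mathfrak{R}_{\lambda,\delta,h}|\le \|B\|_{\B(\L^2)}\cdot \tr v_{\lambda,\delta,h}(P(h))_\chi$ via the trace-norm inequality and the positivity of $v_{\lambda,\delta,h}=g(1-f)\ge 0$, which makes $v(P(h))_\chi$ a positive operator. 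This avoids the decomposition of $B$ and reduces the remainder to a second application of the trace formula with $B=\mathrm{id}$, but the resulting estimate is the same as yours.

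For the final integral step---your ``main obstacle''---the paper does not invoke the coarea formula. It quotes Lemma \ref{lem:hypersurf} and Corollary \ref{cor:hypersurf2}, which give directly that the thin-shell average $\frac{1}{h^\delta}\int_{p^{-1}([c,c+h^\delta])}(\cdot)$ equals the hypersurface integral up to $O(h^\delta)$, and applies this with integrand $\widetilde{\langle b\rangle}_G$. Your concern that the implicit constant in that $O(h^\delta)$ depends on the transversal regularity of $b\in S^0_\delta$ (and could in principle be $O(h^{-\delta})$) is well founded; the paper's proof does not discuss this dependence and in effect treats $b$ as having $h$-uniform Lipschitz control in the direction transverse to $\Sigma_c$, so the same point is left implicit there.
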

\begin{rem}\label{rem:0304}
Note that the  second summand in the remainder dominates the estimate if and only if $\delta\geq \frac{1-(2\kappa+3)\vartheta}{4\kappa +8}$. 
Further, the integral in the leading term  equals  $\intop_{\widetilde{\Sigma}_c}\widetilde {\eklm{b}}_G\, \d\widetilde{\Sigma}_c$, compare Section \ref{subsec:spaces}, and it can actually be viewed as an integral over the smooth bundle 
\bqn
S^\ast_{\widetilde p,c}(\widetilde M_\text{reg}):=\mklm{(x,\xi) \in T^\ast (\widetilde M_\text{reg}): \widetilde p(x,\xi)=c},
\eqn
where $\widetilde p$ is the function on $T^\ast (\widetilde M_\text{reg})$ induced by $p$  via Lemma \ref{lem:isomorphic}.  In case that $\widetilde M$ is an orbifold, the mentioned integral is given by an integral over the orbifold bundle $S^\ast_{\widetilde p,c}(\widetilde M):=\mklm{(x,\xi) \in T^\ast \widetilde M: \widetilde p(x,\xi)=c}$, compare Remark \ref{rem:3.2}.
\end{rem}
In the special case of a constant semiclassical character family, corresponding to the study of a single fixed isotypic component, we obtain as a direct corollary
\begin{thm}\label{thm:weyl31}Choose a fixed $\chi\in \widehat G$.  Then  for each $\delta\in(0,\frac{1}{2\kappa +4})$ one has the asymptotic formula
\begin{align}
\label{eq:intBweak}
\begin{split}
(2\pi)^{n-\kappa} h^{n-\kappa-\delta}\hspace*{-1.75em}\sum_{\begin{array}{c}\scriptstyle 
j\in\mathbb{N}:\,  u_{j}(h)\in \L_{\chi}^{2}(M),\\
\scriptstyle E_{j}(h)\in [c,c+h^\delta]\end{array}}\hspace*{-1.5em}&\langle Bu_{j}(h),u_{j}(h)\rangle_{\L^2(M)}=d_{\chi}\,[ \pi_{\chi}|_{H}:\mathds{1}]\intop_{{\Sigma}_c\cap \,\Omega_{\text{reg}}}b \, \frac {\d{\mu}_c}{\vol_\O}\\
 &+\; \mathrm{O}\Big(h^{\delta}+h^{\frac{1}{2\kappa+4}-\delta}\left (\log h^{-1}\right)^{\Lambda^{}-1}\Big), \qquad h \to 0.
\end{split}
\end{align}
\end{thm}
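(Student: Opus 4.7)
The strategy is to deduce the statement from the equivariant trace formula (Theorem~\ref{thm:weakweyl}) by approximating the spectral indicator $\mathbf{1}_{[c,c+h^\delta]}$ from above and below by smooth test functions of class $\mathcal{S}^{\mathrm{bcomp}}_{\sigma}$, with smoothing scale $\sigma$ chosen optimally. Writing the eigenbasis $\{u_j(h)\}$ compatibly with the Peter--Weyl decomposition and using that $P(h)$ commutes with each $T_\chi$, the left-hand side of the theorem equals
\begin{equation*}
\frac{(2\pi)^{n-\kappa}h^{n-\kappa-\delta}}{\#\Wh}\sum_{\chi\in\Wh}\frac{\tr\bigl(\mathbf{1}_{[c,c+h^\delta]}(P(h))\circ B\bigr)_\chi}{d_\chi[\pi_\chi|_H:\mathds{1}]}.
\end{equation*}

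Set $\sigma := \frac{1-(2\kappa+3)\vartheta}{2\kappa+4}$. From the hypothesis $\vartheta<\frac{1-(2\kappa+4)\delta}{2\kappa+3}$ one checks $\sigma > \delta$ and $\sigma + \vartheta < \frac{1}{2\kappa+3}$, so $\sigma$ is admissible for Theorem~\ref{thm:weakweyl}. Choose $\rho_h^\pm \in \mathcal{S}^{\mathrm{bcomp}}_{\sigma}$ with $\rho_h^- \leq \mathbf{1}_{[c,c+h^\delta]} \leq \rho_h^+$, obtained by mollifying $\mathbf{1}_{[c,c+h^\delta]}$ on scale $h^\sigma$ (which is smaller than $h^\delta$ since $\sigma > \delta$), so that $\|(\rho_h^\pm)^{(j)}\|_\infty = \mathrm{O}(h^{-\sigma j})$. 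Since $B \in \Psi^0_{h,\delta} \subset \Psi^0_{h,\sigma}$, Theorem~\ref{thm:weakweyl} yields
\begin{equation*}
\frac{(2\pi h)^{n-\kappa}}{\#\Wh}\sum_\chi \frac{\tr(\rho_h^\pm(P(h))\circ B)_\chi}{d_\chi[\pi_\chi|_H:\mathds{1}]} = \int_{\Omega_{\text{reg}}} b\,(\rho_h^\pm\circ p)\,\frac{\d\Omega_{\text{reg}}}{\vol_\O} + \mathrm{O}\bigl(h^{1-(2\kappa+3)(\sigma+\vartheta)}(\log h^{-1})^{\Lambda-1}\bigr).
\end{equation*}
Since $c$ is a regular value of $p$, the coarea formula rewrites the integral as $\int_\R \rho_h^\pm(E)\,\widetilde{I}(E)\,dE$ with $\widetilde{I}(E) := \int_{\Sigma_E\cap\Omega_{\text{reg}}} b\,\d\mu_E/\vol_\O$, a smooth function of $E$ near $c$. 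Using $\int\rho_h^\pm\,dE = h^\delta + \mathrm{O}(h^\sigma)$ and Taylor-expanding $\widetilde I$ at $c$, this equals $h^\delta\widetilde{I}(c) + \mathrm{O}(h^{2\delta}) + \mathrm{O}(h^\sigma)$.

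The replacement error is controlled by positivity: the operators $(\rho_h^+ - \mathbf{1}_{[c,c+h^\delta]})(P(h))$ and $(\mathbf{1}_{[c,c+h^\delta]} - \rho_h^-)(P(h))$ are positive, self-adjoint, and commute with each $T_\chi$. The bound $|\tr(AC)| \leq \norm{C}_{\B(\L^2(M))}\|A\|_1$ combined with $\rho_h^- \leq \mathbf{1}_{[c,c+h^\delta]} \leq \rho_h^+$ gives
\begin{equation*}
\Big|\sum_\chi \frac{\tr((\rho_h^\pm - \mathbf{1}_{[c,c+h^\delta]})(P(h))\circ B)_\chi}{d_\chi[\pi_\chi|_H:\mathds{1}]}\Big| \leq \norm{B}_{\B(\L^2(M))}\sum_\chi \frac{\tr((\rho_h^+ - \rho_h^-)(P(h)))_\chi}{d_\chi[\pi_\chi|_H:\mathds{1}]}.
\end{equation*}
A second application of Theorem~\ref{thm:weakweyl}, this time with $B = \id$ and with the nonnegative function $\rho_h^+ - \rho_h^- \in \mathcal{S}^{\mathrm{bcomp}}_\sigma$, whose associated integral over $\Omega_{\text{reg}}$ is $\mathrm{O}(h^\sigma)$ by coarea (since the support has total width $\mathrm{O}(h^\sigma)$ near $c$ and $c+h^\delta$), bounds the right-hand side by $\frac{\#\Wh}{(2\pi h)^{n-\kappa}}\bigl(\mathrm{O}(h^\sigma) + \mathrm{O}(h^{1-(2\kappa+3)(\sigma+\vartheta)}(\log h^{-1})^{\Lambda-1})\bigr)$.

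Collecting all contributions and multiplying by $\frac{(2\pi)^{n-\kappa}h^{n-\kappa-\delta}}{\#\Wh}$, the leading term becomes $\widetilde{I}(c) = \int_{\Sigma_c\cap\Omega_{\text{reg}}} b\,\d\mu_c/\vol_\O$, and the total remainder reduces to $\mathrm{O}(h^\delta) + \mathrm{O}(h^{\sigma-\delta}) + \mathrm{O}(h^{1-\delta-(2\kappa+3)(\sigma+\vartheta)}(\log h^{-1})^{\Lambda-1})$. The choice of $\sigma$ gives the algebraic identity $\sigma - \delta = 1 - \delta - (2\kappa+3)(\sigma+\vartheta)$, so the last two error terms consolidate to $\mathrm{O}(h^{\frac{1-(2\kappa+3)\vartheta}{2\kappa+4} - \delta}(\log h^{-1})^{\Lambda-1})$, matching the claim. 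The main obstacle is the boundary eigenvalue count: one must convert the signed difference $(\rho_h^\pm - \mathbf{1}_{[c,c+h^\delta]})(P(h))$ into a trace-norm bound via positivity and then bootstrap by re-applying Theorem~\ref{thm:weakweyl} with $B = \id$ to estimate the overshoot, while the particular value of $\sigma$ is precisely the one balancing the smoothing-width error $h^{\sigma-\delta}$ with the trace-formula remainder $h^{1-\delta-(2\kappa+3)(\sigma+\vartheta)}$ inside the admissible range for Theorem~\ref{thm:weakweyl}.
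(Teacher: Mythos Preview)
Your argument is correct and is essentially the paper's own proof of the more general Theorem~\ref{thm:weyl2}, from which Theorem~\ref{thm:weyl31} is obtained in the paper simply by specializing to the constant family $\Wh=\{\chi\}$, $\vartheta=0$. The mechanism is identical: mollify $\mathbf{1}_{[c,c+h^\delta]}$ on a scale $h^\sigma$, apply the trace formula Theorem~\ref{thm:weakweyl}, control the boundary contribution via positivity and a second invocation of the trace formula with $B=\id$, localize to the hypersurface $\Sigma_c$, and balance the smoothing error against the trace-formula remainder. Your pair $\rho_h^\pm$ plays exactly the role of the paper's $f_{\lambda,\delta,h}\le \mathbf{1}_{[c,c+h^\delta]}\le g_{\lambda,\delta,h}$ (with $v_{\lambda,\delta,h}=g_{\lambda,\delta,h}(1-f_{\lambda,\delta,h})$ replacing your $\rho_h^+-\rho_h^-$), your $\sigma$ equals the paper's $\lambda+\delta$, and your balancing identity $\sigma-\delta=1-\delta-(2\kappa+3)(\sigma+\vartheta)$ is precisely the paper's optimization $\lambda=1-(\lambda+\delta+\vartheta)(2\kappa+3)-\delta$. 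The only difference is packaging: you carried the full family $\{\Wh\}$ through the argument, whereas the paper first proves Theorem~\ref{thm:weyl2} and then reads off Theorem~\ref{thm:weyl31} in one line.
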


\begin{rem}
\label{rem:4.2}
A weaker version of Theorem \ref{thm:weyl31} can be proved if instead of the spectral window $[c,c+h^\delta]$ one considers a fixed interval $[r,s]$, the numbers $r,s$ being regular values of $p$. One can then show that
\[
(2\pi h)^{n-\kappa} \hspace*{-1em}\sum_{\begin{array}{c}\scriptstyle 
j\in\mathbb{N}:\,  u_{j}(h)\in \L_{\chi}^{2}(M),\\
\scriptstyle E_{j}(h)\in [r,s]\end{array}}\hspace*{-1em}\frac{\left\langle Bu_{j}(h),u_{j}(h)\right\rangle_{\L^2(M)}}{d_{\chi}\,[ \pi_{\chi}|_{H}:\mathds{1}]}=\intop_{p^{-1}([r,s])\cap\Omega_\text{reg}}b\,\frac{d\Omega_\text{reg}}{\vol_\O}
\;+\;\mathrm{O}\left(h^{\frac{1}{2\kappa+4}} (\log h^{-1})^{{\Lambda^{}}-1}\right),
\]
which is proven in complete analogy. The even weaker statement
\bqn 
\lim _{h \to 0} (2\pi h)^{n-\kappa} \hspace*{-1em}\sum_{\begin{array}{c}\scriptstyle 
j\in\mathbb{N}:\,  u_{j}(h)\in \L_{\chi}^{2}(M),\\
\scriptstyle E_{j}(h)\in [r,s]\end{array}}\hspace*{-1em}\frac{\left\langle Bu_{j}(h),u_{j}(h)\right\rangle_{\L^2(M)}}{d_{\chi}\,[ \pi_{\chi}|_{H}:\mathds{1}]}=\intop_{p^{-1}([r,s])\cap\Omega_\text{reg}}b\,\frac{d\Omega_\text{reg}}{\vol_\O}
\eqn
could in principle  also be obtained without  the remainder estimates from \cite{ramacher15a} using heat kernel methods as in \cite{donnelly78} or \cite{bruening-heintze79}, adapted to the semiclassical setting. Nevertheless, for the study of growing families of isotypic components and shrinking spectral windows as in Theorem \ref{thm:weyl2} remainder estimates are necessary due to the lower rate of convergence. 
\end{rem}

\begin{proof}[Proof of Theorem \ref{thm:weyl2}]
The proof is an adaptation of the proof of \cite[Theorem 15.3]{zworski}
to our situation, but with a sharper energy localization. Again, we consider first  a single character $\chi\in \widehat G$. Let $h\in (0,1]$ and fix a positive number $\lambda<\frac{1}{2\kappa+3}-\delta$. Choose $f_{\lambda,h},g_{\lambda,h}\in \CT(\mathbb{R},[0,1])$ such that $\supp f_{\lambda,h}\subset[-\frac{1}{2}+{h^\lambda},\frac{1}{2}-{h^\lambda}]$,
$f_{\lambda,h}\equiv1$ on $[-\frac{1}{2}+3{h^\lambda},\frac{1}{2}-3{h^\lambda}]$, $\supp g_{\lambda,h}\subset[-\frac{1}{2}-3{h^\lambda},\frac{1}{2}+3{h^\lambda}]$,
$g_{\lambda,h}\equiv1$ on $[-\frac{1}{2}-{h^\lambda},\frac{1}{2}+{h^\lambda}]$, and 
\begin{equation}
\label{eq:derivat}
|\gd_y^j f_{\lambda,h}(y)| \leq C_j \, {h^{-\lambda j}}, \qquad |\gd_y^j g_{\lambda,h}(y)| \leq C_j \, {h^{-\lambda j}},
\end{equation}
compare \cite[Theorem 1.4.1 and (1.4.2)]{hoermanderI}. Put  $c(h):=ch^{-\delta}+\frac{1}{2}$, so that $x\mapsto h^{-\delta}x -c(h)$ defines a diffeomorphism from $[c,c+h^\delta]$ to  $[-1/2,1/2]$, and set $f_{\lambda,\delta,h}(x):=f_{\lambda,h}(h^{-\delta}x-c(h))$, 
$g_{\lambda,\delta,h}(x):=g_{\lambda,h}(h^{-\delta}x-c(h))$.
Let $\Pi_{\chi}$ be the projection onto the span of $\{u_{j}(h)\in \L_{\chi}^{2}(M):\; E_{j}(h)\in[c,c+h^\delta]\}$. Then
\begin{align}
\begin{split}
f_{\lambda,\delta,h}(P(h))_{\chi}\circ \Pi_{\chi} & =\Pi_{\chi}\, \circ f_{\lambda,\delta,h}(P(h))_{\chi}=f_{\lambda,\delta,h}(P(h))_{\chi},\label{eq:equal}\\
g_{\lambda,\delta,h}(P(h))_{\chi}\circ\Pi_{\chi} & =\Pi_{\chi}\, \circ g_{\lambda,\delta,h}(P(h))_{\chi}=\Pi_{\chi}.
\end{split}
\end{align}
Note that the operators $f_{\lambda,\delta,h}(P(h))$, $g_{\lambda,\delta,h}(P(h))$, $\Pi_{\chi}$ are finite rank operators. For that elementary
reason, all operators we consider in the following are trace class.
In particular,  by (\ref{eq:equal}) we have
\[\hspace*{-6.75cm}
\sum_{\begin{array}{c}\scriptstyle 
j\in\mathbb{N}:\,  u_{j}(h)\in \L_{\chi}^{2}(M) \\
\scriptstyle E_{j}(h)\in[c,c+h^{\delta}]\end{array}}\left\langle Bu_{j}(h),u_{j}(h)\right\rangle_{\L^2(M)}\; =\;\textrm{tr }\Pi_{\chi}\,\circ B\,\circ\,\Pi_{\chi}
\]\vspace*{-3em}
\begin{eqnarray}
\hspace*{2.7cm}& \overset{\textrm{}}{=} & \textrm{tr }f_{\lambda,\delta,h}(P(h))_{\chi}\circ B_{\chi}+\textrm{tr }\Pi_{\chi}\circ g_{\lambda,\delta,h}(P(h))_{\chi}\circ \left(1-f_{\lambda,\delta,h}(P(h))_{\chi}\right)\circ B_{\chi}\circ\Pi_{\chi}\nonumber \\
 & = & \textrm{tr }\big(f_{\lambda,\delta,h}(P(h))\circ B\big)_{\chi}+\underbrace{\textrm{tr }\Pi_{\chi}\circ \big(g_{\lambda,\delta,h}(P(h))\circ \left(1-f_{\lambda,\delta,h}(P(h))\right)\circ B\big)_{\chi} \circ\Pi_{\chi}}_{=\mathfrak{R}_{\lambda,\delta,h}}.\label{eq:Bsum}
\end{eqnarray}
In what follows, we shall show that the first summand in \eqref{eq:Bsum} represents the main contribution, while $\mathfrak{R}_{\lambda,\delta,h}$ becomes small as $h$ goes to zero. For this, we estimate $\mathfrak{R}_{\lambda,\delta,h}$ using the trace norm. Recall that if $L\in \B(\L^2(M))$ is of trace class and $M\in \B(\L^2(M))$, then $\left\Vert LM\right\Vert_{\tr ,{\L}^{2}(M)}\leq\left\Vert L\right\Vert _{\tr ,{\L}^{2}(M)}\left\Vert M\right\Vert _{\B\left(\L^2(M)\right)}$, see e.g.\ \cite[p.\ 337]{zworski}. By the spectral functional calculus this implies
\begin{align}
\begin{split}
 |\mathfrak{R}_{\lambda,\delta,h}| & \leq  \left\Vert \Pi_{\chi}\circ \left(g_{\lambda,\delta,h}(P(h))\circ \left(1-f_{\lambda,\delta,h}(P(h))\right)\circ B\right)_{\chi}\circ \Pi_{\chi}\right\Vert _{\tr ,{\L}^{2}(M)} \\
 & \leq  \left\Vert \left(g_{\lambda,\delta,h}(P(h))\circ \left(1-f_{\lambda,\delta,h}(P(h))\right)\right)_{\chi}\right\Vert _{\tr ,{\L}^{2}(M)}\left\Vert B\right\Vert _{B\left(\L^2(M)\right)} \\
 & = \left\Vert v_{\lambda,\delta,h}(P(h))_{\chi}\right\Vert _{\tr ,{\L}^{2}(M)}\left\Vert B\right\Vert _{B\left(\L^2(M)\right)},\label{eq:normB}
 \end{split}
\end{align}
where we set $v_{\lambda,\delta,h}=g_{\lambda,\delta,h}(1-f_{\lambda,\delta,h})\in \CT(\mathbb{R},[0,1])$.
In particular, $v_{\lambda,\delta,h}$ is non-negative. By the spectral functional calculus,
$v_{\lambda,\delta,h}(P(h))$ is a positive operator. $T_{\chi}$ is a
projection, hence positive as well. Moreover, by the spectral functional calculus, $v_{\lambda,\delta,h}(P(h))_{\chi}$ commutes with $T_\chi$, as $P(h)$ does. It follows that $v_{\lambda,\delta,h}(P(h))_{\chi}$
is positive as the composition of positive commuting operators. For a positive
operator, the trace norm is identical to the trace. Therefore (\ref{eq:normB})
implies
\begin{equation}
|\mathfrak{R}_{\lambda,\delta,h}|\leq \left\Vert B\right\Vert _{B\left(\L^2(M)\right)} \, \textrm{tr }v_{\lambda,\delta,h}(P(h))_{\chi}.\label{eq:R}
\end{equation}
From our knowledge about the supports of $f_{\lambda,h}$ and $g_{\lambda,h}$,
we conclude that 
\begin{equation}
\supp v_{\lambda,\delta,h}\subset[c-3{h^\lambda} h^{\delta},c+3{h^\lambda} h^{\delta}]\cup[c+h^{\delta}-3{h^\lambda} h^{\delta},c+h^{\delta}+3{h^\lambda} h^{\delta}].\label{14.08}
\end{equation}
Now, note that the functions $f_{\lambda,\delta,h}, \,g_{\lambda,\delta,h},\,v_{\lambda,\delta,h}$ are elements of $\mathcal{S}^{\mathrm{bcomp}}_{\lambda+\delta}$. Since we chose $\lambda$ such that $\lambda+\delta<\frac{1}{2\kappa+3}$,  we can apply (\ref{eq:resultfixedchi}) with $B= \id_{\L^2(M)}$ to conclude
\begin{multline}\left | \left(2\pi h\right)^{n-\kappa} \frac{\tr v_{\lambda,\delta,h}\left(P(h)\right)_{\chi}}{d_{\chi}\left[ \pi_{\chi}|_{H}:\mathds{1}\right]}-\int_{{\Omega}_{\textrm{reg}}}(v_{\lambda,\delta,h}\circ {p})\,  \frac{ d{\Omega}_{\textrm{reg}}}{\vol_\O}\right | \\
\leq C\Big(h^{1-2(\lambda+\delta)}+W_\kappa(\chi)h^{1-(\lambda+\delta)(2\kappa+3)}\left (\log h^{-1}\right)^{{\Lambda^{}}-1}\Big),\label{eq:4.7}
\end{multline}
where $C$ is independent of $h$ and $\chi$, and $W_\kappa(\chi)$
was defined in \eqref{eq:14.08.15}. On the other hand, applying (\ref{eq:resultfixedchi}) to the first summand on the right hand side of (\ref{eq:Bsum}) yields
\begin{multline}
\left | \left(2\pi h\right)^{n-\kappa} \frac{\tr \left(f_{\lambda,\delta,h}\left(P(h)\right)\circ B\right)_{\chi}}{d_{\chi}\left[ \pi_{\chi}|_{H}:\mathds{1}\right]}-\int_{{\Omega}_{\textrm{reg}}}(f_{\lambda,\delta,h}\circ {p})\, b \, \frac{ d{\Omega}_{\textrm{reg}}}{\vol_\O}\right |\\
\leq C \Big(h^{1-2(\lambda+\delta)}+W_\kappa(\chi)h^{1-(\lambda+\delta)(2\kappa+3)}\left (\log h^{-1}\right)^{{\Lambda^{}}-1}\Big),\label{eq:4.8989}
\end{multline}
where $C$ is a new constant independent of $h$ and $\chi$. Next, observe that the supports of the functions $v_{\lambda,\delta,h}\circ p$ and $v_{\lambda,\delta,h}\circ \widetilde p$ are contained in tubular neighbourhoods of width of order $h^{\delta+\lambda}$ around hypersurfaces of $T^*M$ resp.\ $T^*\widetilde{M}_\text{reg}$, and Lemma \ref{lem:hypersurf} and Corollary \ref{cor:hypersurf2} imply 
\bq
\Big|\int_{{\Omega}_{\textrm{reg}}}(v_{\lambda,\delta,h}\circ{p}) \, \frac{ d{\Omega}_{\textrm{reg}}}{\vol_\O}\Big|=\Big|\int_{T^*\widetilde{M}_{\textrm{reg}}}(v_{\lambda,\delta,h}\circ\widetilde {p}) \d(T^*\widetilde{M}_{\textrm{reg}})\Big|=\mathrm{O}(h^{\delta+\lambda})\qquad \text{as }h\to 0.\label{eq:supportshrink2}
\eq
Combining \eqref{eq:Bsum}-\eqref{eq:supportshrink2} leads to
\begin{multline}
\begin{split}
\left(2\pi h\right)^{n-\kappa}\sum_{\begin{array}{c}\scriptstyle 
j\in\mathbb{N}:\,  u_{j}(h)\in \L_{\chi}^{2}(M)\\
\scriptstyle E_{j}(h)\in [c,c+h^\delta]\end{array}}\frac{\left\langle Bu_{j}(h),u_{j}(h)\right\rangle_{\L^2(M)}}{d_{\chi}\left[ \pi_{\chi}|_{H}:\mathds{1}\right]}=\intop_{\;{\Omega}_{\textrm{reg}}}(f_{\lambda,\delta,h}\circ {p}) \, b\, \frac{ d{\Omega}_{\textrm{reg}}}{\vol_\O}\\
+\;\mathrm{O}\Big(h^{\delta+\lambda}+h^{1-2(\lambda+\delta)}+W_\kappa(\chi)h^{1-(\lambda+\delta)(2\kappa+3)}\left (\log h^{-1}\right)^{{\Lambda^{}}-1}\Big),\label{eq:good}
\end{split}
\end{multline}
the constant in the estimate being independent of $\chi$. We proceed by observing
\begin{align}
\begin{split}
\Big |\int_{{\Omega}_{\textrm{reg}}}(f_{\lambda,\delta,h}\circ {p})\, b \, \frac{ d{\Omega}_{\textrm{reg}}}{\vol_\O} &-\int_{{\Omega}_{\textrm{reg}}\cap p^{-1}([c,c+h^\delta])} b \, \frac{ d{\Omega}_{\textrm{reg}}}{\vol_\O}\Big | \\
&\leq \Big | \int_{T^*\widetilde{M}_{\textrm{reg}}}(v_{\lambda,\delta,h}\circ\widetilde {p})\, b \d(T^*\widetilde{M}_{\textrm{reg}})\Big| = \mathrm{O}(h^{\lambda+\delta}).
\end{split}
\end{align}
Furthermore, with ${\Sigma}_c={p}^{-1}(\{c\})$,  $\widetilde{\Sigma}_c=\widetilde{p}^{-1}(\{c\})$,  and the notation from (\ref{eq:orbitalintegral}) one computes
\begin{align}
\label{eq:4.11}
\begin{split}
\frac{1}{h^\delta}\int_{{\Omega}_{\textrm{reg}}\cap p^{-1}([c,c+h^\delta])} b \, \frac{ d{\Omega}_{\textrm{reg}}}{\vol_\O} &=\frac{1}{h^\delta}\int_{T^*\widetilde{M}_{\textrm{reg}}\cap \widetilde{p}^{-1}([c,c+h^\delta])} \widetilde{\eklm{b}}_G \d T^*\widetilde{M}_{\textrm{reg}}\\
&= \int_{\widetilde{\Sigma}_c}\widetilde{\eklm{b}}_G \d \widetilde{\Sigma}_c +\mathrm{O}(h^\delta) =  \intop_{{\Sigma}_c\cap \,\Omega_{\text{reg}}}b \, \frac {\d{\mu}_c}{\vol_\O}+\mathrm{O}(h^\delta),
\end{split}
\end{align}
where  we took into account Lemmas \ref{lem:isomorphic} and Corollary \ref{cor:hypersurf2}. Combining (\ref{eq:good})-(\ref{eq:4.11}) then yields for a fixed $\chi \in \widehat G$
\begin{align}
\begin{split}
(2\pi)^{n-\kappa}h^{n-\kappa-\delta}\sum_{\begin{array}{c}\scriptstyle 
j\in\mathbb{N}:\, u_{j}(h)\in \L_{\chi}^{2}(M) \\
\scriptstyle E_{j}(h)\in [c,c+h^\delta]\end{array}}&\frac{\left\langle Bu_{j}(h),u_{j}(h)\right\rangle_{\L^2(M)}}{d_{\chi}\left[ \pi_{\chi}|_{H}:\mathds{1}\right]}-\intop_{{\Sigma}_c\cap \,\Omega_{\text{reg}}}b \, \frac {\d{\mu}_c}{\vol_\O} \\
&=\mathrm{O}\Big(h^{\delta}+h^\lambda+W_\kappa(\chi)h^{1-(\lambda+\delta)(2\kappa+3)-\delta}\left (\log h^{-1}\right)^{{\Lambda^{}}-1}\Big).\label{eq:remainderterms78}
\end{split}
\end{align}
Here, the constant in the estimate is independent of $\chi$. Just as at the end of the proof of Theorem \ref{thm:weakweyl}, we can now take for each $h\in(0,1]$ the average over the finite set $\Wh$, and knowing that $\Wh$ has growth rate $\vartheta$, we get
\begin{align}
\begin{split}
\frac{(2\pi)^{n-\kappa}h^{n-\kappa-\delta}}{\#\Wh}&\sum_{J(h)}\frac{\left\langle Bu_{j}(h),u_{j}(h)\right\rangle_{\L^2(M)}}{d_{\chi_j(h)}\left[ \pi_{\chi_j(h)}|_{H}:\mathds{1}\right]}\;-\intop_{{\Sigma}_c\cap \,\Omega_{\text{reg}}}b \, \frac {\d{\mu}_c}{\vol_\O} \\
&=\mathrm{O}\Big(h^{\delta}+h^\lambda+h^{1-(\lambda+\delta+\vartheta)(2\kappa+3)-\delta}\left (\log h^{-1}\right)^{{\Lambda^{}}-1}\Big).\label{eq:averagedresult}
\end{split}
\end{align}
Finally, we choose $\lambda$ such that the remainder estimate is optimal for the given constants $\delta$ and $\vartheta$. This is the case iff
$\lambda=1-(\lambda+\delta+\vartheta)(2\kappa+3)-\delta$, which is equivalent to
\[
\lambda=\frac{1-(2\kappa+3)\vartheta}{2\kappa +4}-\delta.
\]
This choice for $\lambda$ is compatible with the general technical requirement that $\lambda<\frac{1}{2\kappa+3}-\delta$, and the assertion of Theorem \ref{thm:weyl2} follows.
\end{proof}

As consequence of the previous theorem we obtain in particular

\begin{thm}[\bf Equivariant Weyl law for semiclassical character families]
\label{thm:weyl1}For each $\chi \in \widehat G$, denote by $\mathrm{mult}_\chi(E_j(h))$ the multiplicity of the irreducible representation $\pi_\chi$ in the eigenspace $\E_j(h)$ corresponding to the eigenvalue $E_j(h)$. Then  one has in the limit $h\to 0$ the asymptotic formula
\bqn
\frac{(2\pi)^{n-\kappa} h^{n-\kappa-\delta}}{\#\Wh}\hspace{-1.5em}\sum_{\begin{array}{c}\scriptstyle 
\chi\in \Wh,\;j\in\mathbb{N}:\\
\scriptstyle E_{j}(h)\in\, [c,c+h^\delta]\end{array}}\hspace{-1em} \frac{\mathrm{mult}_{\chi}(E_j(h))}{\dim \E_j(h)\cdot [ \pi_{\chi}|_{H}:\mathds{1}]}\,=\,\vol_{d\widetilde{\Sigma}_c}\widetilde{\Sigma}_c\; +\;\mathrm{O}\Big(h^{\delta}+h^{\frac{1-(2\kappa+3)\vartheta}{2\kappa +4}-\delta}\left (\log h^{-1}\right)^{{\Lambda^{}}-1}\Big).\label{eq:weyl1formula}
\eqn
\end{thm}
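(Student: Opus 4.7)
The plan is to deduce Theorem \ref{thm:weyl1} directly from Theorem \ref{thm:weyl2} by taking $B=\mathrm{id}_{\L^2(M)}$, whose principal symbol is represented by the constant symbol function $b\equiv 1\in S^0_\delta(M)$. With this choice, $\langle Bu_j(h),u_j(h)\rangle_{\L^2(M)}=1$ for every $j$, and Theorem \ref{thm:weyl2} asserts
\[
\frac{(2\pi)^{n-\kappa}h^{n-\kappa-\delta}}{\#\Wh}\sum_{j\in J(h)}\frac{1}{d_{\chi_j(h)}\,[\pi_{\chi_j(h)}|_H:\mathds{1}]}=\int_{\Sigma_c\cap\Omega_{\mathrm{reg}}}\frac{d\mu_c}{\vol_\O}+\mathrm{O}\Big(h^\delta+h^{\frac{1-(2\kappa+3)\vartheta}{2\kappa+4}-\delta}(\log h^{-1})^{\Lambda-1}\Big).
\]

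The main task is then to rewrite both sides in the form stated in Theorem \ref{thm:weyl1}. For the left-hand side, I would choose the Hilbert basis $\{u_j(h)\}$ so that each $u_j(h)$ lies in a single isotypic component $\L^2_{\chi_j(h)}(M)$, which is possible because $P(h)$ commutes with the projections $T_\chi$. For a fixed eigenvalue $E$ with eigenspace $\E$, the Peter-Weyl decomposition gives $\E=\bigoplus_{\chi\in\widehat G}(\E\cap \L^2_\chi(M))$, and $\dim(\E\cap\L^2_\chi(M))=d_\chi\,\mathrm{mult}_\chi(E)$. Hence for each $(\chi,E)$ exactly $d_\chi\,\mathrm{mult}_\chi(E)$ basis vectors contribute, yielding
\[
\sum_{j\in J(h)}\frac{1}{d_{\chi_j(h)}\,[\pi_{\chi_j(h)}|_H:\mathds{1}]}=\sum_{\chi\in\Wh}\sum_{E\in[c,c+h^\delta]}\frac{\mathrm{mult}_\chi(E)}{[\pi_\chi|_H:\mathds{1}]}.
\]
A parallel bookkeeping argument — using that each eigenvalue $E$ appears $\dim\E$ times among the basis $\{u_j(h)\}$ and that the summand $\mathrm{mult}_\chi(E_j(h))/(\dim\E_j(h)\,[\pi_\chi|_H:\mathds{1}])$ is constant on such a block — shows that the right-hand side above coincides with the double sum appearing in the statement of Theorem \ref{thm:weyl1}.

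For the leading term, I would apply the disintegration formula \eqref{eq:2.15} of the induced hypersurface measure along the orbit fibration $\Sigma_c\cap\Omega_\mathrm{reg}\to\widetilde\Sigma_c$. Since $\vol_\O$ is constant on each orbit and $\int_{G\cdot\eta}d\mu_{G\cdot\eta}=\vol(G\cdot\eta)$, the inner integral collapses to $1$, giving
\[
\int_{\Sigma_c\cap\Omega_\mathrm{reg}}\frac{d\mu_c}{\vol_\O}=\int_{\widetilde\Sigma_c}d\widetilde\Sigma_c=\vol_{d\widetilde\Sigma_c}\widetilde\Sigma_c.
\]
Combining this with the rewritten sum yields the claimed formula. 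There is no genuine obstacle here; the only point requiring care is the Peter-Weyl bookkeeping identifying the two different presentations of the spectral sum, and the verification that the denominators $d_\chi$ in Theorem \ref{thm:weyl2} cancel correctly against the $\dim\E_j(h)$ appearing in the statement of Theorem \ref{thm:weyl1}.
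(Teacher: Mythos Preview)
Your approach is correct and is precisely what the paper does: Theorem~\ref{thm:weyl1} is stated as an immediate consequence of Theorem~\ref{thm:weyl2} (the paper writes only ``As consequence of the previous theorem we obtain in particular'' and then a \qed), so specializing to $B=\mathrm{id}_{\L^2(M)}$, $b\equiv 1$ is exactly the intended argument. Your Peter--Weyl bookkeeping for the spectral sum and the use of \eqref{eq:2.15} (equivalently Remark~\ref{rem:0304} with $b\equiv1$) to identify the leading term with $\vol_{d\widetilde\Sigma_c}\widetilde\Sigma_c$ are both correct and merely spell out what the paper leaves implicit.
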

\vspace*{-1em}
\qed\\
Again, in the special case that $\Wh=\{\chi\}$ for all $h\in (0,1]$ and some fixed $\chi\in \widehat G$ we obtain
\begin{thm}[\bf Equivariant Weyl law for single isotypic components]
\label{thm:weyl41}Choose a fixed $\chi\in \widehat G$. Then one has in the limit $h\to 0$
\bqn
(2\pi)^{n-\kappa} h^{n-\kappa-\delta}\hspace{-1.5em}\sum_{\begin{array}{c}\scriptstyle 
j\in\mathbb{N}:\\ \scriptstyle E_{j}(h)\in\, [c,c+h^\delta]\end{array}}\hspace{-1em} \frac{\mathrm{mult}_{\chi}(E_j(h))}{\dim \E_j(h)}\,=\,[ \pi_{\chi}|_{H}:\mathds{1}]\vol_{d\widetilde{\Sigma}_c}\widetilde{\Sigma}_c\; +\;\mathrm{O}\Big(h^{\delta}+h^{\frac{1}{2\kappa +4}-\delta}\left (\log h^{-1}\right)^{{\Lambda^{}}-1}\Big).\label{eq:weyl1formula2}
\eqn
\end{thm}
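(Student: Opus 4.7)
The plan is to specialize Theorem \ref{thm:weyl31} to the case where $B$ is the identity operator on $\L^2(M)$, which trivially belongs to $\Psi^0_{h,\delta}(M)$ with principal symbol represented by the constant symbol $b\equiv 1\in S^0_\delta(M)$. Since then $\eklm{Bu_j(h),u_j(h)}_{\L^2(M)}=1$ for every $j$, the left-hand side of \eqref{eq:intBweak} collapses to
\[
(2\pi)^{n-\kappa}\,h^{n-\kappa-\delta}\cdot\#\bigl\{j\in\mathbb{N}:\, u_j(h)\in\L^2_\chi(M),\; E_j(h)\in[c,\,c+h^\delta]\bigr\}.
\]

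To match this against the sum appearing in \eqref{eq:weyl1formula2}, I would reindex the basis $\{u_j(h)\}$ over the pairwise distinct eigenspaces of $P(h)$. Let $l$ enumerate the pairwise distinct eigenvalues $E^l(h)$ of $P(h)$ lying in $[c,\,c+h^\delta]$, with corresponding eigenspaces $\E^l(h)$ of dimension $D_l$. Choosing the orthonormal basis $\{u_j(h)\}$ compatibly with the Peter--Weyl decomposition so that each $u_j(h)$ lies in a single isotypic component, and noting that the $\chi$-isotypic part $T_\chi(\E^l(h))$ has dimension $d_\chi\cdot\mathrm{mult}_\chi(E^l(h))$, the cardinality above equals $d_\chi\sum_l\mathrm{mult}_\chi(E^l(h))$. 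Reindexing the sum in \eqref{eq:weyl1formula2} over the same basis yields
\[
\sum_{j:\, E_j(h)\in[c,\,c+h^\delta]}\frac{\mathrm{mult}_\chi(E_j(h))}{\dim\E_j(h)}\;=\;\sum_{l}D_l\cdot\frac{\mathrm{mult}_\chi(E^l(h))}{D_l}\;=\;\sum_{l}\mathrm{mult}_\chi(E^l(h)),
\]
so the two counts differ by exactly the factor $d_\chi$.

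It then remains to identify the leading term on the right-hand side of \eqref{eq:intBweak}. Applying \eqref{eq:2.15} with the $G$-invariant integrand $\eta\mapsto 1/\vol_\O(\eta)$, and using that $\int_{G\cdot\eta}\d\mu_{G\cdot\eta}/\vol(G\cdot\eta)=1$ by definition of the orbit volume, one obtains
\[
\int_{\Sigma_c\cap\,\Omega_{\text{reg}}}\frac{\d\mu_c}{\vol_\O}\;=\;\int_{\widetilde{\Sigma}_c}\d\widetilde{\Sigma}_c\;=\;\vol_{\d\widetilde{\Sigma}_c}\widetilde{\Sigma}_c.
\]
Dividing the asymptotic formula of Theorem \ref{thm:weyl31} by the $h$-independent, $\chi$-dependent constant $d_\chi$ then yields Theorem \ref{thm:weyl41}, with the remainder preserving its stated order since $d_\chi$ is absorbed into the implicit constant. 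There is essentially no obstacle here: the result is a routine rewriting of Theorem \ref{thm:weyl31} via combinatorial bookkeeping between the two natural indexing conventions (orthonormal basis vectors versus distinct eigenspaces), together with the elementary orbital average \eqref{eq:2.15}; all analytic input has already been supplied by Theorems \ref{thm:weyl2} and \ref{thm:weyl31}.
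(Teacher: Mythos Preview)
Your proof is correct and matches the paper's approach: the paper presents Theorem~\ref{thm:weyl41} as an immediate corollary (with no explicit proof, just \qed), obtained by specializing the generalized Weyl law to $B=\id_{\L^2(M)}$ and the constant family $\Wh=\{\chi\}$. You route this through Theorem~\ref{thm:weyl31} and then set $B=\id$, while the paper nominally routes through Theorem~\ref{thm:weyl1} and then sets $\Wh=\{\chi\}$; these are the same two specializations in the opposite order, and your combinatorial bookkeeping between basis-indexed and eigenspace-indexed sums, together with the orbital average via \eqref{eq:2.15}, is exactly what the paper leaves implicit.
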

\vspace*{-1em}
\qed
\begin{rem}
Note that the leading terms in the formulas above are non-zero. Indeed, if $c$ is a regular value of $p$ and consequently of $\widetilde{p}$, both $\Sigma_c$ and $\widetilde{\Sigma}_{c}$ are non-degenerate hypersurfaces, which implies that their volumes are non-zero. 
\end{rem}

As mentioned before, the proof of  the generalized equivariant semiclassical Weyl law in Theorem \ref{thm:weyl2} relies on the singular equivariant asymptotics which are the content of  Theorem \ref{thm:main}. Hereby  one cannot assume that the considered integrands are supported away  from the singular part of $\Omega$, in particular when localizing to $\Sigma_c\cap \Omega_{\mathrm{reg}}$ in \eqref{eq:4.11}. This means that for general group actions a desingularization process is indeed necessary, as the following examples illustrate. 

\begin{examples}
\label{ex:4.4}
\begin{enumerate}[leftmargin=*]\quad \item Let $G$ be a compact Lie group of dimension at least $1$, acting effectively and locally smoothly on the $n$-sphere $S^n$ with precisely one orbit type. Then $G$ either acts transitively or freely on $S^n$ \cite[Theorem IV.6.2]{bredon}. In the latter case, $G$ is either $S^1$, $S^3$, or the normalizer of $S^1$ in $S^3$. Consequently, if  $M$ is an arbitrary compact $G$-manifold, 
$S^\ast M$ will contain non-principal isotropy types in general. As a simple example, consider the linear action of $G=S^1$ on the $3$-sphere $M=S^3=\mklm{x \in \R^{4}: \norm{x}=1}$ given by
\bqn 
S^1=\mklm{z=e^{i\phi}, \, \phi \in [0,2\pi) } \ni  z \longmapsto R(z)=
\left (
\begin{matrix}
 1 & 0 & 0 & 0 \\ 0 & 1 & 0 & 0 \\ 0 & 0 & \cos \phi & \sin \phi \\ 0 & 0 & -\sin \phi & \cos \phi 
\end{matrix}
\right )  \in \SO(4)
\eqn
with isotropy types  $(\mklm{e})$ and $(S^1)$. The induced action on the tangent bundle  $TS^3=\coprod_x T_xS^3=\coprod_x \mklm{(x,v)\in \R^8: x\in S^3,\;v \perp x}$, which we identify with $T^\ast S^3$ via the induced metric, is given by $z \cdot (x,v) \mapsto (R(z) x,R(z) v)$, and has the same isotropy types. Let now $x\in S^3(S^1)$ be of singular orbit type. Then $x_3=x_4=0$ and $S^1$ acts on
\bqn 
S_xS^3=\mklm{(x,v)\in S^3\times S^3: v_1x_1+v_2x_2=0}
\eqn
with isotropy types  $(\mklm{e})$ and $(S^1)$. In particular, the $S^1$-action on $S_xS^3\simeq S_x^\ast S^3$ is neither transitive nor free.

\item Let $M=G$ be a Lie group with a left-invariant Riemannian metric and  $K\subset G$ a compact subgroup. Consider the left action of $G$ on itself  and the decomposition of $T^\ast G$    into isotropy types with respect to  the induced left $K$-action.  Taking into account the left trivialization $T^\ast G\simeq G \times \g^\ast $ explained in \cite[Example 4.5.5]{ortega-ratiu} one has 
\bqn 
 (S^\ast G)(H)=S^\ast(G(H))
 \eqn
 for an arbitrary closed subgroup $H\subset K$.  Thus, in general, the co-sphere bundle of $G$ will contain non-principal isotropy types. Assume now that $G$  is compact and consider a Schr\"odinger operator $P(h)$ on $G$ with $K$-invariant symbol function $p$. Let $c\in \R$ be  a regular value of $p$ and $\Sigma_c=p^{-1}(\mklm{c})$. Then the results of Theorem \ref{thm:weyl2} apply. By the previous considerations, 
\bqn 
{S^\ast G\cap \Omega_{\mathrm{reg}}}={\Omega \cap S^\ast (G(H))}, \qquad \Omega=\Jbb_K^{-1}(0),
\eqn
where $\Jbb_K: T^\ast G \to \k^\ast$ is the momentum map of the $K$-action, and $H$  a principal isotropy group. Consequently, the closure of ${S^\ast G\cap \Omega_{\mathrm{reg}}}$, and more generally of ${\Sigma_c\cap \Omega_{\mathrm{reg}}}$, will contain non-principal isotropy types in general. 

\end{enumerate}

\end{examples}

In case that $G$ acts on $M$ with finite isotropy groups, $G$-invariant pseudodifferential operators on $M$ correspond to pseudodifferential operators on the orbifold $\widetilde M$, and viceversa. In fact, the spectral theory of elliptic operators on compact orbifolds has attracted quite much attention  recently \cite{dryden-et-al,stanhope-uribe,kordyukov12} and, as mentioned at the beginning of this paper, our work can be viewed  as part of an attempt to develop a  spectral theory of elliptic operators on  general singular $G$-spaces. \\

\appendix

\section{}

In this appendix, we shall  collect a few useful technical facts related to the spaces and measures introduced in Section \ref{subsec:spaces}. We will refer to this summary also in Part II \cite{kuester-ramacher15b}. As before, we are not assuming that the considered measures are normalized, unless otherwise stated. With the notation as in Section \ref{subsec:spaces} we have

\begin{lem}\label{lem:pushfwd} The measure $d\widetilde{\Omega}_{\textrm{reg}}$ agrees with the Riemannian volume density defined by the Riemannian metric on $\widetilde{\Omega}_{\textrm{reg}}$ that is induced by the Sasaki metric on $T^*M$. 
\end{lem}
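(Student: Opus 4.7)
The plan is to compare the two top-degree forms on $\widetilde{\Omega}_{\textrm{reg}}$ pointwise by reducing the question to a purely linear-algebraic statement on the horizontal space of the Riemannian submersion $\pi: \Omega_{\textrm{reg}} \to \widetilde{\Omega}_{\textrm{reg}}$. First I would recall that $T^*M$, equipped with the Sasaki metric $g$ and the canonical symplectic form $\omega$, carries a natural compatible almost complex structure $J$ making it almost Kähler, and that this is precisely the structural reason for the identity $\omega^n/n!=d(T^*M)$ cited from \cite{katok}. Fixing $\eta \in \Omega_{\textrm{reg}}$ and writing $V_\eta := T_\eta(G\cdot\eta)$ for the vertical space, the orthogonal decomposition
\[
T_\eta(T^*M)=V_\eta\oplus H_\eta\oplus N_\eta
\]
into vertical, horizontal, and normal parts (the last being the $g$-normal to $\Omega_{\textrm{reg}}$ in $T^*M$) will be the main actor. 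Under the symplectomorphism and isometry $d\pi|_{H_\eta}:H_\eta\to T_{[\eta]}\widetilde{\Omega}_{\textrm{reg}}$ arising from the Riemannian submersion property together with the relation $\iota^*\omega=\pi^*\widetilde\omega$, it suffices to show that on $H_\eta$ itself the symplectic volume $(\omega|_{H_\eta})^{n-\kappa}/(n-\kappa)!$ equals the Riemannian volume form defined by $g|_{H_\eta}$.

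The key identification is that $N_\eta = JV_\eta$. This will come from combining the moment map characterization $T_\eta\Omega_{\textrm{reg}}=V_\eta^{\omega\text{-perp}}$ (which follows from $d\Jbb_X=-\iota_{\widetilde X}\omega$) with the almost-Kähler identity $W^{g\text{-perp}}=(JW)^{\omega\text{-perp}}$ and the non-degeneracy of $\omega$. Using $J^2=-\mathrm{id}$ together with this, I would then verify that $V_\eta\oplus JV_\eta$ is a $J$-invariant symplectic subspace and that
\[
H_\eta=(V_\eta\oplus JV_\eta)^{\omega\text{-perp}}=(V_\eta\oplus JV_\eta)^{g\text{-perp}},
\]
which in particular makes $H_\eta$ itself $J$-invariant. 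At this step one uses the crucial fact that $V_\eta$ is isotropic for $\omega$, i.e.\ $\omega|_{V_\eta}=0$: this follows because the orbit $G\cdot\eta$ is contained in $\Omega$, and on $\Omega$ the orbit directions are precisely the null directions of $\omega|_\Omega$. Thus both summands $V_\eta\oplus JV_\eta$ and $H_\eta$ inherit compatible Kähler structures.

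Now the concluding step is a short linear algebra computation. Given the simultaneously $g$- and $\omega$-orthogonal decomposition $T_\eta(T^*M)=(V_\eta\oplus JV_\eta)\oplus H_\eta$ into symplectic subspaces of even real dimensions $2\kappa$ and $2(n-\kappa)$, both the symplectic volume form $\omega^n/n!$ and the Riemannian volume form $dV_g$ factor as wedge products of the corresponding forms on the two summands. Since these two top forms agree globally on $T^*M$ (Sasaki volume equals symplectic volume), and since on the Kähler subspace $V_\eta\oplus JV_\eta$ one checks directly that a $g$-orthonormal basis of $V_\eta$ together with its $J$-image is a symplectic basis, making $\omega^\kappa/\kappa!$ equal to the Riemannian volume there, the two volumes must also coincide on $H_\eta$. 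Transporting this equality through the isometric symplectomorphism $d\pi|_{H_\eta}$ yields the claimed equality of measures on $\widetilde{\Omega}_{\textrm{reg}}$.

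The main obstacle I anticipate is the careful bookkeeping in step two: verifying the identification $N_\eta=JV_\eta$ and the simultaneous $g$- and $\omega$-orthogonality of the splitting $T_\eta(T^*M)=(V_\eta\oplus JV_\eta)\oplus H_\eta$ requires using the compatibility relations $g(\cdot,\cdot)=\omega(\cdot,J\cdot)$ and $J$-invariance of $\omega$ in tandem with the moment map characterization of $T_\eta\Omega_{\textrm{reg}}$, and getting the signs and orthogonality complements right is the delicate point. Once this Kähler decomposition is established, the equality of volume forms on $H_\eta$ and hence on $T_{[\eta]}\widetilde{\Omega}_{\textrm{reg}}$ follows by a direct factorization argument from the almost Kähler identity on the ambient space.
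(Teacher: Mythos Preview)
Your proposal is correct and, at its heart, uses the same mechanism as the paper's proof: the almost complex structure $J$ on $T^*M$ coming from the Sasaki--symplectic compatibility preserves the horizontal distribution and therefore descends to an almost complex structure on $\widetilde{\Omega}_{\textrm{reg}}$ compatible with both $\widetilde\omega$ and the quotient metric, forcing the two volume forms to agree. The difference is in execution. The paper dispatches the final step by citing Blair's theorem \cite[Theorem 4.6]{blair} (any metric associated to $\widetilde\omega$ via an almost complex structure gives the symplectic volume) and then argues in one line that $\mathcal{J}$ ``restricts'' along $\iota$ and descends by $G$-equivariance; this is concise but the intermediate assertion that $\iota^*\mathcal{J}$ is an almost complex structure on $\Omega_{\textrm{reg}}$ is imprecise as stated, since $T_\eta\Omega_{\textrm{reg}}$ is not $\mathcal{J}$-invariant (indeed $\mathcal{J}V_\eta=N_\eta$, exactly as you compute). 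Your argument instead makes the linear algebra explicit at a point: you identify $N_\eta=JV_\eta$, verify that $H_\eta=(V_\eta\oplus JV_\eta)^{\perp_\omega}=(V_\eta\oplus JV_\eta)^{\perp_g}$ is $J$-invariant, and then deduce equality of volumes on $H_\eta$ either directly from the K\"ahler structure or via your factorization of the ambient volume. This is longer but self-contained, avoids the external citation, and in fact fills the small gap in the paper's phrasing. Your factorization step is slightly roundabout --- once you know $H_\eta$ is $J$-invariant with compatible $(g,\omega,J)$, the equality of Riemannian and symplectic volumes on $H_\eta$ is immediate --- but it is not wrong.
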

\begin{proof}
By \cite[Theorem 4.6]{blair} all metrics on $\widetilde{\Omega}_{\textrm{reg}}$ which are associated to the symplectic form $\widetilde{\omega}$ by an almost complex structure define the same Riemannian volume density, and that density agrees with the one defined by the symplectic form $\widetilde{\omega}$. Hence, it suffices to show that the Riemannian metric on $\widetilde{\Omega}_{\textrm{reg}}$ induced by the $G$-invariant Sasaki metric on $T^*M$ is associated to $\widetilde{\omega}$ by an almost complex structure. Now, the Sasaki metric $g_S$ on $T^*M$ is associated to the canonical symplectic form $\omega$ on $T^*M$ by an almost complex structure $\mathcal{J}:TT^*M\to TT^*M$. Consequently, the Riemannian metric $\iota^*g_S$ on $\Omega_{\textrm{reg}}$ is associated to the symplectic form $\iota^*\omega$ by the almost complex structure $\iota^*\mathcal{J}$, where $\iota:\Omega_{\textrm{reg}}\to T^*M$ is the inclusion. Since both $\iota^*g_S$ and $\iota^*\omega$ are $G$-invariant, $\iota^*\mathcal{J}:T\Omega_{\textrm{reg}}\to T\Omega_{\textrm{reg}}$ is $G$-equivariant, and therefore induces an almost complex structure $\widetilde{\iota^*\mathcal{J}}:T\widetilde{\Omega}_{\textrm{reg}}\to T\widetilde{\Omega}_{\textrm{reg}}$ which associates  the metric induced by ${\iota^*g_S}$ on $\widetilde{\Omega}_{\textrm{reg}}$ with $\widetilde{\omega}$.
\end{proof}

\begin{lem}\label{lem:nullset}$M-M_\text{reg}$ is a null set in $(M,dM)$, and $\Omega_\textrm{reg}- (T^*M_\textrm{reg}\cap \Omega)$ is a null set in $(\Omega_\textrm{reg},d\Omega_\textrm{reg})$.
\end{lem}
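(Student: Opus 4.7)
The plan is to handle the two assertions separately via stratification arguments based on the principal orbit theorem and the finiteness of orbit types on a compact $G$-manifold.

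For the first assertion, since $M$ is compact only finitely many isotropy types occur \cite[Theorem IV.1.2]{bredon}, so
\[M\setminus M_\text{reg}=\bigsqcup_{(K)>(H)} M(K)\]
is a finite disjoint union of smooth locally closed submanifolds of $M$. The principal orbit theorem \cite[Theorem IV.3.1]{bredon} asserts that $M_\text{reg}=M(H)$ is open and dense in $M$, so each non-principal stratum $M(K)$ has dimension strictly less than $n=\dim M$. A finite union of smooth submanifolds of dimension below $n$ is null with respect to the Riemannian density $dM$.

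For the second assertion, I would first reduce to a statement on the smooth quotient $\widetilde\Omega_\text{reg}$. The set $\mathcal{A}:=\Omega_\text{reg}\setminus (T^*M_\text{reg}\cap\Omega)=\Omega_\text{reg}\cap\pi^{-1}(M\setminus M_\text{reg})$, with $\pi:T^*M\to M$ the footpoint projection, is $G$-invariant. Since $\Omega_\text{reg}$ carries only the orbit type $(H)$, the orbit projection $q:\Omega_\text{reg}\to\widetilde\Omega_\text{reg}$ is a smooth fiber bundle with fiber $G/H$ by \cite[Theorem IV.3.3]{bredon}. A Fubini argument in local trivializations shows that a $G$-invariant measurable set is null in $(\Omega_\text{reg},d\Omega_\text{reg})$ if and only if its image under $q$ is null in $\widetilde\Omega_\text{reg}$. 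Using the induced footpoint map $\widetilde\pi:\widetilde\Omega_\text{reg}\to\widetilde M$ and Lemma \ref{lem:isomorphic}, one has $q(\mathcal{A})=\widetilde\pi^{-1}(\widetilde M\setminus\widetilde M_\text{reg})$, so it suffices to show this preimage is null in the $2(n-\kappa)$-dimensional manifold $\widetilde\Omega_\text{reg}$.

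To this end, decompose $\widetilde M\setminus\widetilde M_\text{reg}=\bigsqcup_{(K)>(H)}\widetilde M(K)$ into finitely many smooth strata and apply the slice theorem at a representative $x\in M(K)$ with slice $V_x=T_xM/T_x(G\cdot x)$: the stratum $\widetilde M(K)$ is locally modeled on the fixed-point subspace $V_x^K$, while the fiber of $\widetilde\pi$ over $[x]$ is the $K$-quotient of the $(H)$-isotropy stratum of the dual slice $\text{Ann}\,T_x(G\cdot x)\simeq V_x^*$, which is a smooth manifold of dimension at most $\dim V_x^*-(\dim K-\dim H)=n-\kappa$. Hence
\[\dim\widetilde\pi^{-1}(\widetilde M(K))\leq\dim V_x^K+(n-\kappa).\]
The needed strict inequality $\dim V_x^K<n-\kappa$ follows from writing $V_x=V_x^K\oplus W$ as a $K$-invariant orthogonal decomposition: each principal $K$-orbit in $V_x$, of dimension $\dim K-\dim H$, is contained in $W$, so $\dim W\geq \dim K-\dim H$, giving the weak bound $\dim V_x^K\leq n-\kappa$; equality would force a principal orbit to be open in $W$, which is excluded by compactness of $K$ together with effectiveness of the linear $K$-action on $V_x$ inherited from the effective $G$-action on $M$ (orbits are closed, and a principal orbit open and closed in the connected vector space $W$ would exhaust it, contradicting linearity). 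Summing over the finitely many $(K)>(H)$ yields that $q(\mathcal{A})$ is a finite union of smooth submanifolds of dimension strictly less than $2(n-\kappa)$, hence null.

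The main technical obstacle is precisely the last strict dimension bound: while the weaker inequality $\dim V_x^K\leq n-\kappa$ follows directly from the existence of a principal $K$-orbit inside the $K$-invariant complement of $V_x^K$, ruling out the equality case requires the nonexistence of open orbits for compact linear actions, combined with the inherited effectiveness of the $K$-action on the slice.
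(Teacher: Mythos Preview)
The paper does not give a self-contained proof here; it only cites \cite[Lemma~3]{cassanas-ramacher09} and an unpublished thesis. Your proposal supplies an independent stratification-and-dimension-counting argument in the expected spirit, and the first assertion is handled correctly. For the second assertion you correctly isolate the crux, namely the strict inequality $\dim V_x^K<n-\kappa$, and your open-orbit argument disposes of the case $0<\dim W\le\dim K-\dim H$.

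There is, however, a genuine error in how you treat the boundary case $W=\{0\}$: the claim that ``effectiveness of the linear $K$-action on $V_x$ is inherited from the effective $G$-action on $M$'' is \emph{false} in general. What is inherited is effectiveness of $K$ on the full tangent space $T_xM=T_x(G\cdot x)\oplus V_x$; the $\mathrm{Ad}$-action of $K$ on $T_x(G\cdot x)\cong\mathfrak g/\mathfrak k$ can already be faithful, leaving room for a nontrivial kernel on $V_x$. A concrete counterexample: $G=SO(3)\times S^1$ acting as a product on $M=S^2\times X$ with $S^1$ acting effectively on $X$ with a fixed point $x_0$; at $x=(N,x_0)$ one has $K=SO(2)\times S^1$ and $V_x=\{0\}\times T_{x_0}X$, on which the entire $SO(2)$-factor acts trivially. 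The repair is immediate and does not need effectiveness: $W=\{0\}$ means $K$ acts trivially on $V_x$, so every slice point has isotropy $K$, and by the slice theorem the principal isotropy type in a neighbourhood of $x$ is $(K)$, forcing $(K)=(H)$ and contradicting $(K)>(H)$. With this correction your argument goes through. As a minor simplification, the passage to $\widetilde\Omega_{\mathrm{reg}}$ is an unnecessary detour: $\Omega_{\mathrm{reg}}\cap\pi^{-1}(M(K))$ is an open subset of the smooth vector bundle $\Omega|_{M(K)}$ of rank $n-\dim(G/K)$, hence a manifold of dimension $\dim M(K)+n-\dim(G/K)=\dim V_x^K+n$, and the same inequality $\dim V_x^K<n-\kappa$ shows directly that this is below $\dim\Omega_{\mathrm{reg}}=2n-\kappa$.
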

\begin{proof}The proof is completely analogous to the proof of \cite[{Lemma 3}]{cassanas-ramacher09}, and we refer the reader to \cite{kuester} for details.
\end{proof}

Similarly, on $\widetilde M=M/G$ we have 

\begin{cor}\label{cor:nullset2}
$\widetilde M-\widetilde{M}_\textrm{reg}$ is a null set in $(\widetilde M,d{\widetilde M})$, and $\widetilde{\Omega}_\textrm{reg}- (T^*M_\textrm{reg}\cap \Omega)/G$ is a null set in $(\widetilde{\Omega}_\textrm{reg},d\widetilde{\Omega}_\textrm{reg})$.
\end{cor}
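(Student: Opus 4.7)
The plan is to reduce both statements to Lemma \ref{lem:nullset} by exploiting the way the quotient measures $d\widetilde M$ and $d\widetilde\Omega_{\textrm{reg}}$ were constructed from $dM$ and $d\Omega_{\textrm{reg}}$ in Section \ref{subsec:spaces}. No new geometric input should be required; in particular, none of the desingularization or stationary phase machinery from earlier sections will be needed.

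For the first claim I would argue directly from the pushforward definition $d\widetilde M=\pi_\ast dM$, where $\pi:M\to\widetilde M$ denotes the orbit projection. Since $M_{\textrm{reg}}=M(H)$ is $G$-invariant one has $\pi^{-1}(\widetilde M_{\textrm{reg}})=M_{\textrm{reg}}$, so $\pi^{-1}(\widetilde M-\widetilde M_{\textrm{reg}})=M-M_{\textrm{reg}}$, and Lemma \ref{lem:nullset} immediately gives $d\widetilde M(\widetilde M-\widetilde M_{\textrm{reg}})=dM(M-M_{\textrm{reg}})=0$.

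The second claim is slightly more subtle because $d\widetilde\Omega_{\textrm{reg}}$ is not itself a pushforward of $d\Omega_{\textrm{reg}}$ (the orbits in $\Omega_{\textrm{reg}}$ generally have positive dimension), but the two are linked by the fiber-integration identity \eqref{eq:prop81}. Denoting by $\Pi:\Omega_{\textrm{reg}}\to\widetilde\Omega_{\textrm{reg}}$ the orbit projection and setting $A:=\widetilde\Omega_{\textrm{reg}}-(T^*M_{\textrm{reg}}\cap\Omega)/G$, I would apply \eqref{eq:prop81} to the characteristic function $\mathds{1}_{\Pi^{-1}(A)}$. Because $T^*M_{\textrm{reg}}\cap\Omega$ is $G$-invariant, $\Pi^{-1}(A)=\Omega_{\textrm{reg}}-(T^*M_{\textrm{reg}}\cap\Omega)$, which is a $d\Omega_{\textrm{reg}}$-null set by Lemma \ref{lem:nullset}. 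Moreover, since $\Pi^{-1}(A)$ is $G$-saturated, for each orbit $G\cdot\eta\in\widetilde\Omega_{\textrm{reg}}$ the inner orbit integral collapses to $\vol(G\cdot\eta)\cdot\mathds{1}_A(G\cdot\eta)$, yielding
\[
0 \;=\; \int_{\Omega_{\textrm{reg}}}\mathds{1}_{\Pi^{-1}(A)}\,d\Omega_{\textrm{reg}} \;=\; \int_A \vol(G\cdot\eta)\,d\widetilde\Omega_{\textrm{reg}}(G\cdot\eta).
\]
Since $\vol_\O>0$ everywhere on $\widetilde\Omega_{\textrm{reg}}$, as stressed in Section \ref{subsec:spaces}, this forces $d\widetilde\Omega_{\textrm{reg}}(A)=0$, proving the second assertion.

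The only point that requires a bit of care is the reduction of the inner orbit integral, which relies exclusively on the $G$-saturation of $\Pi^{-1}(A)$ so that each fiber is either contained in or disjoint from $\Pi^{-1}(A)$. Given the positivity of $\vol_\O$ and the availability of \eqref{eq:prop81}, this is genuinely the entire argument; I do not foresee any substantial obstacle.
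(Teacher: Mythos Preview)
Your proposal is correct and follows essentially the same route as the paper: both claims are reduced to Lemma~\ref{lem:nullset} via the pushforward definition of $d\widetilde M$ and the fiber-integration identity \eqref{eq:prop81}. The only cosmetic difference is that the paper integrates $1/\vol_\O$ over $\Omega_{\textrm{reg}}-(T^*M_{\textrm{reg}}\cap\Omega)$ to obtain the quotient volume directly, whereas you integrate the characteristic function and then invoke $\vol_\O>0$; these are equivalent manipulations of the same idea.
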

\begin{proof}The first claim is true by definition of the measure $d{\widetilde M}$ and Lemma \ref{lem:nullset}. Concerning the second claim, note  that 
\[
(\Omega_\textrm{reg}- (T^*M_\textrm{reg}\cap \Omega_\textrm{reg}) )/G=\widetilde{\Omega}_\textrm{reg}- (T^*M_\textrm{reg}\cap \Omega_\text{reg})/G.
\]
Consequently,  (\ref{eq:prop81}) and Lemma \ref{lem:nullset} together yield
\[
\text{vol }\left(\widetilde{\Omega}_\textrm{reg}- (T^*M_\textrm{reg}\cap \Omega_\text{reg})/G\right)=\intop_{\Omega_\textrm{reg}- (T^*M_\textrm{reg}\cap \Omega_\text{reg})}\frac{1}{\text{vol}\left(G\cdot\eta\right)}d\Omega_\textrm{reg}(\eta)=0. 
\]
\end{proof}
\begin{lem}\label{lem:volsmooth}The orbit volume function $\vol_\O|_{M_\text{reg}}:M_\text{reg}\to\R$, $x\mapsto {\vol}(G\cdot x)$, is smooth. Moreover, if the dimension of the principal orbits is at least $1$, the function $\text{vol}\,_{\mathcal{O}}|_{M_\text{reg}}$ can be extended by zero to a continuous function $\overline{\text{vol}}\,_{\mathcal{O}}:M\to\R$.
\end{lem}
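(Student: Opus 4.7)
The smoothness on $M_\text{reg}$ I would prove via the slice theorem. At any $x_0 \in M_\text{reg}$ (after conjugating so that $G_{x_0} = H$), a $G$-equivariant tubular neighborhood $U$ of $G \cdot x_0$ is identified with $G \times_H S$ for an $H$-invariant slice $S$; shrinking $S$, every point in $U$ has isotropy exactly $H$. For each $s \in S$ the orbit map $\psi_s : G/H \to G \cdot s$, $gH \mapsto g \cdot s$, is a diffeomorphism, and the pullback Riemannian metrics $\psi_s^\ast g_M|_{G \cdot s}$ form a smooth family of Riemannian metrics on the compact manifold $G/H$ (the fundamental vector fields $\widetilde X$ of $X \in \g$ being smooth on $M$). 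Hence $\vol(G \cdot s) = \int_{G/H} d\mathrm{vol}_{\psi_s^\ast g_M}$ is smooth in $s$, and by $G$-invariance $\vol_\O$ is smooth on $U$, hence on all of $M_\text{reg}$.

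For the continuous extension to $M$ assuming $\kappa \geq 1$, given $x_0 \in M \setminus M_\text{reg}$ I would show $\vol_\O(x) \to 0$ as $x \to x_0$ along $M_\text{reg}$. The essential ingredient is that the orbit dimension strictly drops at singular points, i.e.\ $\dim(G \cdot x_0) < \kappa$. To exploit this, fix a basis $X_1, \dots, X_r$ of $\g$ and consider the smooth matrix-valued function $\mathcal{A}(x)_{ij} := g_M(\widetilde X_i(x), \widetilde X_j(x))$ on $M$; its rank at $x$ equals $\dim(G \cdot x)$. In a slice chart at $x_0$ I would choose a $\kappa$-dimensional subspace $\mathfrak{m} \subset \g$ that is transversal to $\mathrm{Lie}(G_x)$ for all $x \in M_\text{reg}$ close to $x_0$ and that contains a subspace $\mathfrak{n} \subset \mathrm{Lie}(G_{x_0})$ of dimension $\kappa - \dim(G \cdot x_0) \geq 1$. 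Then, for such $x$, one has an expression $\vol(G \cdot x) = c(x) \sqrt{\det \mathcal{A}(x)|_\mathfrak{m}}$, where $c(x)$ is a locally bounded continuous factor depending on $\vol(G_x)$ and the Jacobian of the orbit parametrization. Since the fundamental vector fields of $\mathfrak{n} \subset \mathrm{Lie}(G_{x_0})$ vanish at $x_0$, $\mathcal{A}(x_0)|_\mathfrak{m}$ carries a zero block of size $\dim \mathfrak{n}$, forcing $\det \mathcal{A}(x_0)|_\mathfrak{m} = 0$; the continuity of $\mathcal{A}$ then yields $\vol_\O(x) \to 0$.

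The principal difficulty is the explicit construction of $\mathfrak{m}$ and the derivation of the local volume formula uniformly across the isotropy subgroups $G_x$, which vary with $x \in M_\text{reg}$ near $x_0$. This is handled by working in a slice chart at $x_0$, where all the relevant isotropy groups $G_x$ are contained in $G_{x_0}$; hence the Lie algebras $\mathrm{Lie}(G_x)$ converge to $\mathrm{Lie}(G_{x_0})$ as $x \to x_0$. A fixed $\mathfrak{m}$ transversal to $\mathrm{Lie}(G_{x_0})$ plus an auxiliary subspace of dimension $\dim(G \cdot x_0)$ then remains transversal to $\mathrm{Lie}(G_x)$ for all nearby $x$, and $\vol(G_x)$ stays bounded away from $0$ and $\infty$ on a neighborhood of $x_0$, so the vanishing of $\det \mathcal{A}(x)|_\mathfrak{m}$ propagates to the orbit volume. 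For the detailed verification one can refer to the PhD thesis \cite{kuester}.
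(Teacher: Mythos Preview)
The paper's own proof is a bare citation to \cite[Proposition 1]{pacini}, so there is no argument to compare against; you are supplying details the paper omits. Your smoothness argument on $M_{\text{reg}}$ via the slice theorem and the smooth family of pullback metrics on $G/H$ is standard and correct.

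There is, however, a genuine gap in the continuous-extension part. You write that ``the orbit dimension strictly drops at singular points, i.e.\ $\dim(G\cdot x_0)<\kappa$'' for every $x_0\in M\setminus M_{\text{reg}}$. This fails for \emph{exceptional} orbits, i.e.\ non-principal orbits whose isotropy group has the same identity component as $H$ but strictly more connected components; such orbits have dimension exactly $\kappa$. At an exceptional point $x_0$ your Gram matrix $\mathcal{A}(x_0)|_{\mathfrak m}$ is not forced to be singular, and the mechanism you propose for driving $\vol_\O(x)\to 0$ collapses. Concretely, for the standard $S^1$-action on $\R P^2$ the principal orbit lengths tend to $2\pi$ as one approaches the exceptional orbit (the image of the equator, of length $\pi$), so the extension by zero is \emph{not} continuous there. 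Your argument is fine at genuinely lower-dimensional orbits; to handle the exceptional case you must either add a hypothesis excluding it, or replace $\vol_\O$ by the quantity Pacini actually proves continuous --- essentially the Jacobian $\sqrt{\det\mathcal{A}(x)}$ of the orbit map $G\to M$, which differs from $\vol(G\cdot x)$ by the index $[G_x:H]$ and hence \emph{does} match across the exceptional stratum. Check which of these two functions is really needed downstream before committing to the statement as written.
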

\begin{proof}See \cite[{Proposition 1}]{pacini}.\end{proof}

\begin{rem}\label{rem:vol}The function $\overline{\text{vol}}\,_{\mathcal{O}}:M\to\R$ from the previous lemma is in general different from the original orbit volume function $\text{vol}\,_{\mathcal{O}}:M\to\R$,  $x\mapsto \text{vol}(G\cdot x)$.  The latter function is by definition nowhere zero and not continuous if there are some orbits of dimension $0$ and some of dimension $>0$.
\end{rem}

\begin{lem}\label{lem:qnormmeas} On $\widetilde M=M/G$ we have \[
\frac{d{\widetilde M}}{\textrm{vol}}\Big|_{\widetilde{M}_{\text{reg}}}=d\widetilde{M}_{\text{reg}},\qquad\frac{d{\widetilde M}}{\textrm{vol}}\Big|_{\widetilde M-\widetilde{M}_{\text{reg}}}\equiv 0.\]
\end{lem}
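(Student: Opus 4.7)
The plan is to verify the two identities by testing the measures against continuous functions and then invoking, respectively, the fiber-integration formula \eqref{eq:prop80} for the first and the null-set statement from Corollary \ref{cor:nullset2} for the second.

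For the first identity, I would take an arbitrary $\phi\in C_c(\widetilde M_{\text{reg}})$ and compute $\int_{\widetilde M}\phi\,d{\widetilde M}$. By the definition of $d{\widetilde M}=\pi_\ast dM$ given just before \eqref{eq:prop80}, this equals $\int_M (\phi\circ\pi)\,dM$. Since $\phi$ is compactly supported in $\widetilde M_{\text{reg}}$, the function $\phi\circ\pi$ is compactly supported in $M_{\text{reg}}$, so formula \eqref{eq:prop80} applies and yields
\[
\int_M(\phi\circ\pi)\,dM=\int_{\widetilde M_{\text{reg}}}\int_{G\cdot x}(\phi\circ\pi)(x')\,d\mu_{G\cdot x}(x')\,d\widetilde M_{\text{reg}}(G\cdot x).
\]
The key observation is that $\phi\circ\pi$ is constant along the orbit $G\cdot x$ with value $\phi(G\cdot x)$, so the inner integral is simply $\phi(G\cdot x)\cdot\vol(G\cdot x)$. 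This gives $\int_{\widetilde M}\phi\,d\widetilde M=\int_{\widetilde M_{\text{reg}}}\phi\cdot\vol\,d\widetilde M_{\text{reg}}$, i.e.\ $d\widetilde M|_{\widetilde M_{\text{reg}}}=\vol\cdot d\widetilde M_{\text{reg}}$. Dividing by the nowhere-vanishing positive function $\vol$ produces the first claim.

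For the second identity, I would invoke Corollary \ref{cor:nullset2}, according to which $\widetilde M-\widetilde M_{\text{reg}}$ is a null set in $(\widetilde M,d\widetilde M)$. Because $\vol:\widetilde M\to(0,\infty)$ is a strictly positive function by the definitions recalled in Section \ref{subsec:spaces} (the counting measure is used when the orbit is finite, so the value is never zero), the reciprocal $1/\vol$ is a well-defined positive measurable function on $\widetilde M$. Therefore every $d\widetilde M$-null set is automatically a null set for the absolutely continuous measure $d\widetilde M/\vol$, which forces the restriction $(d\widetilde M/\vol)|_{\widetilde M-\widetilde M_{\text{reg}}}$ to vanish identically. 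The only potential subtlety is that $1/\vol$ might be unbounded (for example when some orbits degenerate), but since we are only integrating over a genuine $d\widetilde M$-null set, this has no effect and no further argument is needed.
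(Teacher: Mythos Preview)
Your proof is correct and follows essentially the same route as the paper's own proof, which simply points to \eqref{eq:prop80}, \eqref{eq:orbithaar}, and Corollary \ref{cor:nullset2} and declares the relations ``obvious.'' You have merely written out the details the authors left implicit; the one cosmetic difference is that you bypass \eqref{eq:orbithaar} by observing directly that the integrand is constant on orbits, which is equivalent.
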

\begin{proof}Considering (\ref{eq:prop80}), \eqref{eq:orbithaar}, and Corollary \ref{cor:nullset2}, the claimed relations are obvious.\end{proof}

\begin{cor}\label{cor:measures}The following two measures on $(T^*M_\text{reg}\cap\Omega_\text{reg})/G$ agree:
\begin{enumerate}
\item the measure $j^*d\widetilde{\Omega}_{\text{reg}}$, where $j$ is the inclusion $j:(T^*M_\text{reg}\cap\Omega_\text{reg})/G\hookrightarrow \widetilde{\Omega}_\text{reg}$ and $d\widetilde{\Omega}_{\text{reg}}$  the symplectic volume form on $\widetilde{\Omega}_{\text{reg}}$;
\item the measure $\Phi^*d(T^*\widetilde{M}_\text{reg})$, where $\Phi:(T^*M_\text{reg}\cap\Omega_\text{reg})/G \to T^*\widetilde{M}_\text{reg}$ is the canonical symplectomorphism from Lemma \ref{lem:isomorphic} and $d(T^*\widetilde{M}_\text{reg})$ the symplectic volume form on $T^*\widetilde{M}_\text{reg}$.
\end{enumerate}
\end{cor}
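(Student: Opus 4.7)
The plan is to reduce the claimed equality of measures to the content of Lemma \ref{lem:isomorphic} by invoking the functoriality of the symplectic volume form under symplectomorphisms. Both spaces involved, $\widetilde{\Omega}_{\text{reg}}$ and $T^*\widetilde{M}_{\text{reg}}$, are $2(n-\kappa)$-dimensional symplectic manifolds, and the measures in question are by definition the corresponding symplectic volume forms, i.e., normalized top exterior powers of the respective symplectic forms.

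First, I would recall from Section \ref{subsec:spaces} that
\[
d\widetilde{\Omega}_{\text{reg}}=\frac{\widetilde{\omega}^{\,n-\kappa}}{(n-\kappa)!},\qquad d(T^*\widetilde{M}_{\text{reg}})=\frac{\widehat{\omega}^{\,n-\kappa}}{(n-\kappa)!},
\]
where $\widetilde{\omega}$ is the reduced symplectic form on $\widetilde{\Omega}_{\text{reg}}$ and $\widehat{\omega}$ is the canonical symplectic form on $T^*\widetilde{M}_{\text{reg}}$. Then I would apply Lemma \ref{lem:isomorphic}, which asserts that the canonical map
\[
\Phi:\bigl((T^*M_\text{reg}\cap\Omega_\text{reg})/G,\;j^*\widetilde{\omega}\bigr)\;\longrightarrow\;(T^*\widetilde{M}_\text{reg},\widehat{\omega})
\]
is a symplectomorphism, so that $\Phi^*\widehat{\omega}=j^*\widetilde{\omega}$.

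The last step is pure functoriality: pullback commutes with exterior multiplication, hence
\[
\Phi^*\bigl(d(T^*\widetilde{M}_{\text{reg}})\bigr)=\frac{(\Phi^*\widehat{\omega})^{n-\kappa}}{(n-\kappa)!}=\frac{(j^*\widetilde{\omega})^{n-\kappa}}{(n-\kappa)!}=j^*\!\left(\frac{\widetilde{\omega}^{\,n-\kappa}}{(n-\kappa)!}\right)=j^*\bigl(d\widetilde{\Omega}_{\text{reg}}\bigr),
\]
which is exactly the asserted equality of measures, the only subtlety being that both forms are canonically positive densities (fixed by the symplectic structure) so that the identification of oriented top forms implies the identification of unsigned measures. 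The main conceptual work has already been done in Lemma \ref{lem:isomorphic}; there is no separate obstacle here, and the corollary follows as a one-line consequence of the symplectomorphism property combined with naturality of the Liouville volume construction.
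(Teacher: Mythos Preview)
Your proposal is correct and follows essentially the same approach as the paper: both recall that the two measures are the top exterior powers $\widetilde{\omega}^{\,n-\kappa}/(n-\kappa)!$ and $\widehat{\omega}^{\,n-\kappa}/(n-\kappa)!$, invoke Lemma \ref{lem:isomorphic} to obtain $\Phi^*\widehat{\omega}=j^*\widetilde{\omega}$, and then use compatibility of pullback with the wedge product to conclude. The paper's argument is virtually identical to yours, merely phrased slightly more tersely.
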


\begin{proof}The measures $d\widetilde{\Omega}_{\text{reg}}$ and  $d(T^*\widetilde{M}_\text{reg})$ are defined by the volume forms $\widetilde{\omega}^{n-\kappa}/(n-\kappa)!$ and $\widehat{\omega}^{n-\kappa}/(n-\kappa)!$, respectively, which implies that the measures $j^*d\widetilde{\Omega}_{\text{reg}}$ and $\Phi^*d(T^*\widetilde{M}_\text{reg})$ are defined by the volume forms $j^*\widetilde{\omega}^{n-\kappa}/(n-\kappa)!$  and  $\Phi^*\widehat{\omega}^{n-\kappa}/(n-\kappa)!$, respectively. Using compatibility of the wedge product with pullbacks and Lemma \ref{lem:isomorphic} we obtain
\[
j^*\widetilde{\omega}^{n-\kappa}=(j^*\widetilde{\omega})^{n-\kappa}=(\Phi^*\widehat{\omega})^{n-\kappa}=\Phi^*(\widehat{\omega}^{n-\kappa}).
\]
\end{proof}
The next lemma describes the hypersurface measures that we use frequently.
\begin{lem}\label{lem:hypersurf0}
Let $\Xbf$ be a smooth manifold equipped with a measure $d\Xbf$ given by a smooth volume density. Let $H\subset \Xbf$ be a smooth hypersurface. Then there is a canonical induced hypersurface measure $dH$ on $H$, given by the restriction of the volume density on $\Xbf$ to $H$. In case that $\Xbf$ is a Riemannian manifold and $d\Xbf$ is defined by the Riemannian volume density, then $dH$ is defined by the Riemannian volume density associated to the induced Riemannian metric on $H$.
\end{lem}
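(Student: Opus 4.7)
The plan is to pass to local Fermi coordinates adapted to $H$, where both ``the restriction of the ambient volume density'' and ``the Riemannian volume density of the induced metric'' can be computed explicitly from the same metric data. Specifically, for any $p \in H$ one finds an open neighborhood $V \subset H$ of $p$ and an $\epsilon > 0$ such that the normal exponential map $\exp^{\perp}\colon V \times (-\epsilon, \epsilon) \to \Xbf$ is a diffeomorphism onto its image $U$. Choosing local coordinates $(x^1,\dots,x^{n-1})$ on $V$ and writing $t$ for the signed normal parameter then produces coordinates $(x,t)$ on $U$ with $U \cap H = \{t = 0\}$.

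Next, I would invoke Gauss's lemma to conclude that in these coordinates the Riemannian metric on $\Xbf$ takes the block form $g = g_{ij}(x,t)\, dx^i \otimes dx^j + dt \otimes dt$, with $g_{ij}(x,0)$ equal to the induced metric on $V$. Consequently, the Riemannian volume density reads
\begin{equation*}
d\Xbf|_U = \sqrt{\det(g_{ij}(x,t))}\, |dx^1 \wedge \cdots \wedge dx^{n-1} \wedge dt|.
\end{equation*}
To make precise the notion of restriction of a smooth volume density to $H$, I would use the general fact that a smooth density $\mu$ on $\Xbf$ together with a transverse vector field $N$ along $H$ determines a smooth density $\iota_N \mu$ on $H$ via interior product; this already establishes the first assertion of the lemma, once one observes that different choices of $N$ differ only by a nowhere-vanishing smooth positive factor so that the resulting density on $H$ is canonical up to a positive function. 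In the Riemannian setting, the canonical choice is $N = \partial/\partial t$, the unit normal, and the resulting density at $t=0$ equals $\sqrt{\det(g_{ij}(x,0))}\, |dx^1 \wedge \cdots \wedge dx^{n-1}|$, which is by definition the Riemannian volume density of $(V, g|_V)$.

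To conclude, I would remark that both $\iota_N d\Xbf|_H$ (with $N$ the unit normal vector field along $H$) and the Riemannian volume density associated to $g|_H$ are globally well-defined objects on $H$ independently of the Fermi coordinates used; since we have matched them locally at every point of $H$, they agree globally. The only genuine ingredient is Gauss's lemma for the block form of the metric in Fermi coordinates, which ensures the absence of mixed terms $dx^i \otimes dt$ and hence that the volume density factorizes neatly along $H$; beyond this, the proof is a direct computation and I do not expect any essential obstacle.
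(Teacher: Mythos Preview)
Your approach is correct and differs from the paper's. The paper works in arbitrary adapted coordinates $(x_1,\dots,x_n)$ with $H=\{x_n=0\}$, writes $d\Xbf = V_\alpha\,|dx_1\wedge\cdots\wedge dx_n|$, and simply \emph{defines} $dH$ by setting the last coordinate to zero in $V_\alpha$; for the Riemannian claim it merely notes $V_\alpha=\sqrt{\det g_\alpha}$ and declares the result immediate. Your route via Fermi coordinates and the interior product with the unit normal is more explicit on precisely the point the paper glosses over: in generic adapted coordinates $\sqrt{\det g_\alpha(x,0)}$ is the determinant of the full $n\times n$ metric at points of $H$, not of the $(n{-}1)\times(n{-}1)$ induced metric, and these agree only when the metric is block-diagonal with $g_{nn}=1$ along $H$, which is exactly what Fermi coordinates provide. (In fact you need the block form only at $t=0$, which follows already from the construction of the normal exponential map; invoking the full Gauss lemma is mild overkill but harmless.) One caveat: your phrase ``canonical up to a positive function'' does not actually establish canonicity in the non-Riemannian first assertion --- a density determined only up to a positive factor is not canonical. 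The paper's argument shares this gap, since it checks independence only of the tangential coordinates and the partition of unity, not of the transverse coordinate; so this is an imprecision in the lemma's formulation rather than in your proof, and the Riemannian case --- where the unit normal removes the ambiguity --- is the one that matters for the applications in the paper.
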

\begin{proof}
Let $\{U_\alpha,\gamma_\alpha\}_{\alpha\in \mathcal{A}}$ be an atlas for $\Xbf$ and $\{\varphi_\alpha\}$ a locally finite partition of unity subordinate to $\{U_\alpha\}$. Let $n:=\dim\Xbf$. For each $\alpha$, we choose the local coordinates $(x^\alpha_1,\ldots,x^\alpha_n)$ over $U_\alpha$ such that $H$ is described by the vanishing of the last coordinate, i.e.\ $H\cap U_\alpha=\{(x^\alpha_1,\ldots,x^\alpha_{n-1},0)\}$. By assumption,
\[
\int_{\Xbf}f\d \Xbf= \sum_{\alpha\in\mathcal{A}}\int_{U_\alpha}f\varphi_\alpha V_\alpha^{\Xbf}|\d x_1\wedge\d x_2\wedge\ldots\wedge\d x_n|\qquad \forall\;f\in C(\Xbf)
\]
with a collection of smooth functions $V^{\Xbf}_\alpha:U_\alpha\to [0,\infty)$, the integrals representing Lebesgue integrals over $\R^n$. Now, we can set for $f\in C(H)$
\[
\int_{H}f\d H:= \sum_{\alpha\in\mathcal{A}}\int_{U_\alpha\cap H}f(x_1,\ldots,x_{n-1})\varphi_\alpha(x_1,\ldots,x_{n-1},0) V_\alpha^{\Xbf}(x^\alpha_1,\ldots,x^\alpha_{n-1},0)|\d x_1\wedge\d x_2\wedge\ldots\wedge\d x_{n-1}|
\]
to define the desired smooth measure $dH$. It is easy to check that this definition is independent of the choice of the first $n-1$ coordinates on $\Xbf$, hence of the choice of coordinates on $H$, and the partition of unity. The last assertion of the theorem follows immediately by choosing  $V_\alpha^{\Xbf}=\sqrt{\text{det}(g_\alpha)}$, where $g_\alpha$ is the matrix representing the Riemannian metric on $\Xbf$ over the chart $U_\alpha$.
\end{proof}

The following technical lemma is a direct consequence of the Fubini theorem. 
\begin{lem}\label{lem:hypersurf} Let $c\in \R$ be a regular value of our Hamiltonian function $p$. For each $\delta>0$, let $I_\delta \subset [c-\delta,c+\delta]$ be a non-empty interval. Then, for all $f\in C(T^*M)$ the limit
\begin{equation}
\lim_{\delta\to 0}\frac{1}{\vol_\R(I_\delta)}\intop_{p^{-1}(I_\delta)}f\d(T^*M)=:\intop_{p^{-1}(\{c\})}f\d\Sigma_c\label{eq:defhypmeas}
\end{equation}
exists, and uniquely defines a finite measure $d\Sigma_c$ on the hypersurface $\Sigma_c:=p^{-1}(\{c\})$ that agrees with the hypersurface measure on $\Sigma_c$ induced by the measure $d(T^*M)$ on $T^*M$ (see Lemma \ref{lem:hypersurf0}). Furthermore, for each $f\in C(T^*M)$, one has the estimate in the limit $\delta\to0$
\begin{equation}
\frac{1}{\vol_\R(I_\delta)}\intop_{p^{-1}(I_\delta)}f\d (T^*M)-\intop_{p^{-1}(\{c\})}f\d\Sigma_c=\mathrm{O}(\delta).\label{eq:hypersurf}
\end{equation}
\end{lem}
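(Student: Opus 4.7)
The plan is to reduce to a local Fubini computation in coordinates adapted to $p$. Since $V$ is bounded on the compact manifold $M$, the Hamiltonian $p(x,\xi) = \norm{\xi}_x^2 + V(x)$ is proper on $T^*M$, so $\Sigma_c = p^{-1}(\{c\})$ is a compact smooth hypersurface, and there exists $\epsilon_0 > 0$ such that every $t\in[c-\epsilon_0, c+\epsilon_0]$ is a regular value. For $\delta<\epsilon_0$, the set $p^{-1}(I_\delta)$ lies inside the compact slab $p^{-1}([c-\epsilon_0,c+\epsilon_0])$. By the implicit function theorem I can choose around each point of $\Sigma_c$ local coordinates $(y_1,\ldots,y_{2n-1},s)$ with $p$ equal to the last coordinate $s$; by compactness I extract a finite such \emph{adapted} atlas $\{(U_\alpha,\gamma_\alpha)\}_{\alpha\in\mathcal{A}}$ of the slab, together with a subordinate partition of unity $\{\varphi_\alpha\}$. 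In each chart, the Sasaki volume density has the form $d(T^*M) = \tilde V_\alpha(y,s)\,\modulus{dy\wedge ds}$ with $\tilde V_\alpha\in C^\infty$ positive.

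Next, Fubini yields
\[
\int_{p^{-1}(I_\delta)} f\d(T^*M) \;=\; \sum_{\alpha\in\mathcal{A}} \int_{I_\delta} F_\alpha(s)\,ds,\qquad F_\alpha(s):=\int (\varphi_\alpha f\,\tilde V_\alpha)(y,s)\,dy,
\]
where each $F_\alpha$ is continuous on a neighborhood of $c$ because $f$ is continuous and $\varphi_\alpha \tilde V_\alpha$ is smooth with compact support. Dividing by $\vol_\R(I_\delta)$ and applying the mean value theorem for integrals gives $F_\alpha(s_\alpha^\ast)$ for some $s_\alpha^\ast\in I_\delta$; as $\delta\to 0$, this tends to $F_\alpha(c)$ by continuity. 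According to Lemma \ref{lem:hypersurf0}, the induced hypersurface measure on $\Sigma_c\cap U_\alpha = \{s=c\}$ is precisely $\tilde V_\alpha(y,c)\,\modulus{dy}$, so $\sum_\alpha F_\alpha(c) = \int_{\Sigma_c} f\,d\Sigma_c$. This simultaneously proves existence of the limit (\ref{eq:defhypmeas}) and identifies it with the hypersurface measure constructed in Lemma \ref{lem:hypersurf0}; finiteness follows from compactness of $\Sigma_c$ together with continuity of $f$.

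For the quantitative estimate (\ref{eq:hypersurf}), I would observe that whenever $f$ is Lipschitz (or $C^1$) on the slab, each $F_\alpha$ is Lipschitz near $c$ with constant controlled by $\norm{\partial_s(\varphi_\alpha f\,\tilde V_\alpha)}_\infty$ and the $y$-support volume. Since $s_\alpha^\ast\in I_\delta\subset[c-\delta,c+\delta]$, this yields $\modulus{F_\alpha(s_\alpha^\ast) - F_\alpha(c)}\leq C_\alpha \delta$, and summing over the finite atlas gives the $O(\delta)$ bound. The main subtle point --- and the only place where real care is needed --- is that the hypothesis $f\in C(T^*M)$ alone yields only $o(1)$ convergence (by uniform continuity on the compact slab); the stated $O(\delta)$ rate requires additional regularity whose constant must be absorbed into the implicit $O$-symbol. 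In all applications within the paper, in particular in \eqref{eq:supportshrink2} and \eqref{eq:4.11}, the test function is a smooth symbol on the compact slab, so this additional regularity is automatic and the constant is uniformly controlled.
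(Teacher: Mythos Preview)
Your approach is essentially the same as the paper's: compactness of the slab, adapted coordinates with $p$ as the last variable, a finite partition of unity, and Fubini to reduce to a one-dimensional average of a continuous function $F_\alpha(s)$. The identification of the limit with the hypersurface measure of Lemma~\ref{lem:hypersurf0} is also carried out identically.

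One point worth noting: you correctly isolate a subtlety that the paper's own proof glosses over. The paper passes from uniform continuity of $f$ on the compact slab directly to a bound of the form $C_\alpha\,|t-c|$ and hence to $\mathrm{O}(\delta)$, but uniform continuity alone only gives a modulus of continuity, i.e.\ $o(1)$. Your observation that the $\mathrm{O}(\delta)$ rate genuinely needs Lipschitz or $C^1$ regularity of $f$ on the slab (with the constant absorbed into the implied $\mathrm{O}$-constant), and that this is satisfied in all the paper's applications since the relevant integrands are smooth symbols, is the honest way to state the argument. So your proof is not only aligned with the paper's but slightly more careful on this point.
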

\begin{proof}Since $M$ is compact, $p^{-1}([c-r,c+r])\subset T^*M$ is compact for every $r>0$. Thus, we can find $\varepsilon>0$ such that each $t\in [c-\varepsilon,c+\varepsilon]$ is a regular value of $p$. This implies that there is an atlas for $T^*M$ such that the intersection of any chart with $p^{-1}(\{t\})$ is given by the points whose last coordinate is equal to $t-c$ for each $t\in [c-\varepsilon,c+\varepsilon]$. As $p^{-1}([c-\varepsilon,c+\varepsilon])$ is compact, we can reduce such an atlas to a finite collection of charts that still cover $p^{-1}([c-\varepsilon,c+\varepsilon])$. Denote the so obtained finite collection of charts by $\{U_\alpha,\gamma_\alpha\}_{\alpha\in\mathcal{A}}$, $\gamma_\alpha:U_\alpha\to V_\alpha\subset \R^{2n}$, $U_\alpha\subset T^*M$. W.l.o.g.\ we can assume that $V_\alpha\subset \R^n$ is bounded. Let $\{\varphi_\alpha\}_{\alpha\in\mathcal{A}}$ be a partition of unity subordinated to the family $\{U_\alpha\}_{\alpha\in\mathcal{A}}$. Then, by definition, it holds for $\delta<\varepsilon$ and an interval $I_\delta\subset[c-\delta,c+\delta]$:
\[
\int_{p^{-1}(I_\delta)}f\d(T^*M) =\sum_{\alpha\in\mathcal{A}}\int_{\gamma_\alpha(U_\alpha\cap p^{-1}(I_\delta))} (f\cdot\varphi_\alpha)\big(\gamma_\alpha^{-1}(y)\big)\sqrt{\text{det}(g_\alpha(y))}\d y\qquad \forall\;f\in C(T^*M),
\]
where $g_\alpha$ is the local matrix defined by the Riemannian metric (the Sasaki metric) on $T^*M$ over the chart $U_\alpha$.  Due to our special choice of coordinates in the chart $U_\alpha$, we get for $f\in C(T^*M)$:
\begin{align*}
&\int_{p^{-1}(I_\delta)}f\d(T^*M) =\sum_{\alpha\in\mathcal{A}}\int_{V_\alpha} (f\cdot\varphi_\alpha)\big(\gamma_\alpha^{-1}(y_1,\ldots,y_{2n})\big)\sqrt{\text{det}(g_\alpha(y_1,\ldots,y_{2n}))}\d y_1\ldots\d y_{2n-1}\d y_{2n}\\
&=\sum_{\alpha\in\mathcal{A}}\int_{I_\delta}\int_{V_\alpha\cap\R^{2n-1}} (f\cdot\varphi_\alpha)\big(\gamma_\alpha^{-1}(y_1,\ldots,y_{2n-1},t)\big)\sqrt{\text{det}(g_\alpha(y_1,\ldots,y_{2n-1},t))}\d y_1\ldots\d y_{2n-1}\d t,
\end{align*}
where we used the Fubini theorem for the Lebesgue integral. Since $p^{-1}([c-\varepsilon,c+\varepsilon])$ is compact, we can assume without loss of generality that the coefficients (and hence the determinant) of $g_\alpha$ and all their partial derivatives are bounded. Furthermore, we know that the functions $f$ and $\varphi_\alpha$ are uniformly continuous on $p^{-1}([c-\varepsilon,c+\varepsilon])$. This implies for each $\alpha\in\mathcal{A}$ and $y\in V_\alpha$:
\begin{multline*}
\Big|(f\cdot\varphi_\alpha)\big(\gamma_\alpha^{-1}(y_1,\ldots,y_{2n-1},t)\big)\sqrt{\text{det}(g_\alpha(y_1,\ldots,y_{2n-1},t))}\\
-(f\cdot\varphi_\alpha)\big(\gamma_\alpha^{-1}(y_1,\ldots,y_{2n-1},c)\big)\sqrt{\text{det}(g_\alpha(y_1,\ldots,y_{2n-1},c))}\Big|\leq C_\alpha(|t-c|)\qquad\forall\;t\in I_\delta,
\end{multline*}
with some constant $C_\alpha>0$ that is independent of $y$. To shorten the notation, set
\[
\mathcal{I}(c,\alpha):=\intop_{V_\alpha\cap\R^{2n-1}} (f\cdot\varphi_\alpha)\big(\gamma_\alpha^{-1}(y_1,\ldots,y_{2n-1},c)\big)\sqrt{\text{det}(g_\alpha(y_1,\ldots,y_{2n-1},c))}\d y_1\ldots\d y_{2n-1}
\]
With $|t-c|\leq \delta$ we then get
\begin{align*}
&\int_{I_\delta}\int_{V_\alpha\cap\R^{2n-1}} (f\cdot\varphi_\alpha)\big(\gamma_\alpha^{-1}(y_1,\ldots,y_{2n-1},t)\big)\sqrt{\text{det}(g_\alpha(y_1,\ldots,y_{2n-1},t))}\d y_1\ldots\d y_{2n-1}\d t\\
&=\intop_{I_\delta}\big(\mathcal{I}(c,\alpha) +\mathrm{O}_\alpha(|t-c|)|\big)\d t
=\vol_\R(I_\delta)\mathcal{I}(c,\alpha) + \int_{I_\delta}\mathrm{O}_\alpha({|t-c|})|\d t
=\vol_\R(I_\delta)\big(\mathcal{I}(c,\alpha)+\mathrm{O}_\alpha(\delta)\big).
\end{align*}
Since $\mathcal{A}$ is finite, we conclude
\bqn
\frac{1}{\vol_\R(I_\delta)}\int_{p^{-1}(I_\delta)}f\d(T^*M) 
=\sum_{\alpha\in\mathcal{A}}\mathcal{I}(c,\alpha)+\mathrm{O}_\alpha(\delta)
=\intop_{p^{-1}(\{c\})}f\d\mu_c+ \mathrm{O}(\delta).
\eqn
\end{proof}
From the previous lemma, one immediately obtains a symmetry-reduced version for $T^*\widetilde{M}_\text{reg}$. 
\begin{cor}\label{cor:hypersurf2}Let $\widetilde p:T^*\widetilde M_\text{reg}\to \R$ be the function induced by our Hamiltonian function $p$ via Lemma \ref{lem:isomorphic}. Let $c\in \R$ be a regular value of $\widetilde{p}$. For each $\delta>0$, let $I_\delta \subset [c-\delta,c+\delta]$ be a non-empty interval. Then, for all $G$-invariant $f\in C(T^*M)$, inducing $\widetilde f\in C(T^*\widetilde M_\text{reg})$, the limit
\begin{equation}
\lim_{\delta\to 0}\frac{1}{\vol_\R(I_\delta)}\intop_{\widetilde p^{-1}(I_\delta)}\widetilde f\d(T^*\widetilde M_\text{reg})=:\intop_{\widetilde p^{-1}(\{c\})}\widetilde f\d\widetilde \Sigma_c\label{eq:defhypmeas2}
\end{equation}
exists, and uniquely defines a finite measure $d\widetilde \Sigma_c$ on the hypersurface $\widetilde \Sigma_c:=\widetilde p^{-1}(\{c\})$ that agrees with the measure induced from the canonical volume density on $T^*\widetilde M_\text{reg}$. Furthermore, for each $G$-invariant $f\in C(T^*M)$, one has the estimate in the limit $\delta\to0$
\begin{equation}
\frac{1}{\vol_\R(I_\delta)}\intop_{\widetilde p^{-1}(I_\delta)}\widetilde f\d (T^*\widetilde M_\text{reg})-\intop_{\widetilde p^{-1}(\{c\})}\widetilde f\d\widetilde\Sigma_c=\mathrm{O}(\delta).\label{eq:hypersurf2}
\end{equation}
\end{cor}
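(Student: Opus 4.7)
The plan is to apply Lemma \ref{lem:hypersurf} in the reduced setting, with $T^*\widetilde M_{\text{reg}}$ in place of $T^*M$, $\widetilde p$ in place of $p$, $d(T^*\widetilde M_{\text{reg}})$ in place of $d(T^*M)$, and $\widetilde f$ as test function. Since $c$ is a regular value of the smooth function $\widetilde p$, the level set $\widetilde \Sigma_c = \widetilde p^{-1}(\{c\})$ is a smooth closed hypersurface of $T^*\widetilde M_{\text{reg}}$, and there exists $\varepsilon>0$ such that $\widetilde p^{-1}((c-\varepsilon, c+\varepsilon))$ consists entirely of regular values and is foliated by smooth hypersurfaces $\widetilde p^{-1}(\{t\})$.

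The proof of Lemma \ref{lem:hypersurf} goes through essentially verbatim, with minor adjustments caused by the non-compactness of $T^*\widetilde M_{\text{reg}}$. In place of the finite atlas used there, paracompactness allows me to choose a locally finite atlas of $\widetilde p^{-1}([c-\varepsilon, c+\varepsilon])$ consisting of charts in which $\widetilde p$ plays the role of the last coordinate, together with a subordinate partition of unity $\{\widetilde\varphi_\alpha\}_{\alpha\in \widetilde{\mathcal{A}}}$. In each chart, $\widetilde f$ and the local components of the volume density $d(T^*\widetilde M_{\text{reg}})$ are uniformly continuous on the relatively compact chart domain, so the Fubini computation yields a local $\mathrm{O}(\delta)$ estimate as in the original proof.

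The principal obstacle is to sum these local contributions to a global $\mathrm{O}(\delta)$ bound, for which I would first verify that the total volume of the strip $\widetilde p^{-1}([c-\varepsilon, c+\varepsilon])$ is finite. Using Lemma \ref{lem:isomorphic}, Corollary \ref{cor:measures}, Corollary \ref{cor:nullset2}, and the orbital integral formula \eqref{eq:prop81}, $G$-invariance of $f$ gives
\[
\int_{\widetilde p^{-1}(I)} \widetilde f \, d(T^*\widetilde M_{\text{reg}}) = \int_{p^{-1}(I) \cap \Omega_{\text{reg}}} \frac{f}{\vol_\O} \, d\Omega_{\text{reg}}
\]
for any bounded interval $I$. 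The set $p^{-1}(I) \cap \Omega$ is compact in $T^*M$ because $p(x,\xi)=\norm{\xi}^2+V(x)$ is proper, and a local analysis of $\vol_\O$ near the singular stratum of $\Omega$, together with the null-set property of Corollary \ref{cor:nullset2}, shows that the right-hand side is finite. Combined with boundedness of $\widetilde f$, this ensures that the chart-wise $\mathrm{O}(\delta)$ contributions, weighted by local chart volumes, sum absolutely to a global $\mathrm{O}(\delta)$ bound.

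Passing to the limit $\delta \to 0$ then yields the limit formula defining $d\widetilde \Sigma_c$, which by Lemma \ref{lem:hypersurf0} coincides with the hypersurface measure on $\widetilde \Sigma_c$ induced by $d(T^*\widetilde M_{\text{reg}})$, completing the proof.
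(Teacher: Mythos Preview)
Your overall strategy---reduce to Lemma~\ref{lem:hypersurf}---matches the paper's, but the way you handle the non-compactness of $T^*\widetilde M_{\text{reg}}$ differs and contains a soft spot.

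The paper's argument exploits directly that $\widetilde p$, $\widetilde f$, and the Riemannian metric on $T^*\widetilde M_{\text{reg}}$ are all induced by $G$-invariance from their counterparts on $T^*M$. Since $p^{-1}([c-\varepsilon,c+\varepsilon])\subset T^*M$ is compact, this inheritance gives for free that $\widetilde f$ is uniformly continuous on $\widetilde p^{-1}([c-\varepsilon,c+\varepsilon])$ and that the local metric coefficients and their derivatives are uniformly bounded there. With these \emph{uniform} bounds in hand, the paper asserts that one can cover $\widetilde p^{-1}([c-\varepsilon,c+\varepsilon])$ by finitely many charts and repeat the computation of Lemma~\ref{lem:hypersurf} verbatim.

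You instead take a merely locally finite atlas via paracompactness and attempt to sum the chart-wise $\mathrm{O}(\delta)$ estimates using finiteness of the total strip volume. The weak step is the sentence ``chart-wise $\mathrm{O}(\delta)$ contributions, weighted by local chart volumes, sum absolutely to a global $\mathrm{O}(\delta)$ bound.'' In each chart the constant in the $\mathrm{O}(\delta)$ term comes from the modulus of continuity of $\widetilde f$ and of the volume density in the last coordinate, not merely from the chart volume; finite total volume does not by itself prevent these moduli from blowing up as the charts approach the singular stratum. To close this you would still need a uniform bound on those moduli---which is exactly what the paper extracts from the induced-from-upstairs structure. So your route is salvageable, but as written the summation step is not justified, whereas the paper's route is shorter because it imports the needed uniformity directly from the compact ambient space.
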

\begin{proof}
Since the functions $\widetilde p$, $\widetilde f$, and the Riemannian metric on $T^*\widetilde M_\text{reg}$ are all induced by $G$-invariance from their counterparts on $T^*M$, we can proceed by analogy to the proof of Lemma \ref{lem:hypersurf}, even though $\widetilde M_\text{reg}$ is not compact. In particular, we can find $\varepsilon>0$ such that each $t\in [c-\varepsilon,c+\varepsilon]$ is a regular value of $\widetilde p$, we can cover $\widetilde p^{-1}([c-\varepsilon,c+\varepsilon])$ by finitely many charts $\{U_\alpha,\gamma_\alpha\}_{\alpha\in\mathcal{A}}$ with a subordinated partition of unity $\{\varphi_\alpha\}_{\alpha\in \mathcal{A}}$, and $\widetilde f$ and each $\varphi_\alpha$ are uniformly continuous on $\widetilde p^{-1}([c-\varepsilon,c+\varepsilon])$. Furthermore, the local coefficients and their derivatives of the Riemannian metric on $T^*\widetilde M_\text{reg}$ are bounded. Thus, we can perform analogous calculations and estimates as in the proof of Lemma \ref{lem:hypersurf}.
\end{proof}
\begin{rem}\label{rem:lastrem}
When $V\equiv 0$ and $c=1$, the hypersurface measure obtained in Corollary \ref{cor:hypersurf2} agrees with the Liouville measure $d(S^*\widetilde M_{\text{reg}})$, since the Riemannian metric on $\widetilde M_{\text{reg}}$ is induced by the $G$-invariant Riemannian metric on $M$, and the volume densities on $T^*\widetilde M_{\text{reg}}$ induced by its canonical symplectic form and the natural Sasaki metric agree.
\end{rem}

\providecommand{\bysame}{\leavevmode\hbox to3em{\hrulefill}\thinspace}
\providecommand{\MR}{\relax\ifhmode\unskip\space\fi MR }
\providecommand{\MRhref}[2]{%
  \href{http://www.ams.org/mathscinet-getitem?mr=#1}{#2}
}
\providecommand{\href}[2]{#2}


\end{document}